\numberwithin{equation}{section}
\newtheorem{theorem}{Theorem}[section]
\newtheorem{lemma}[theorem]{Lemma}
\newtheorem{corollary}[theorem]{Corollary}
\newtheorem{remark}[theorem]{Remark}
\newtheorem{proposition}[theorem]{Proposition}
\newtheorem{definition}[theorem]{Definition}
\newtheorem{example}[theorem]{Example}
\newtheorem{assumption}[theorem]{Assumption}
\renewcommand{\d}{\mathrm{d}}
\renewcommand{\epsilon}{\varepsilon}
\newcommand{\R}{\mathbb{R}}
\newcommand{\N}{\mathbb{N}}
\renewcommand{\P}{\mathbb{P}}
\newcommand{\E}{\mathbb{E}}
\newcommand{\indicator}[1]{\mathbbm{1}_{#1}}
\newcommand{\Id}{\,\mathrm{d}}
\newcommand{\norm}[1]{\left\lVert#1\right\rVert}
\newcommand{\qv}[1]{\langle #1 \rangle}
\newcommand{\testfunctions}[1]{C_c^{\infty}(#1)}
\newcommand{\cF}{\mathcal{F}}
\newcommand{\cG}{\mathcal{G}}
\newcommand{\cP}{\mathcal{P}}
\title[Fluctuation behaviour for interacting particle systems with common noise]{Fluctuation behaviour for interacting particle systems with common noise}
\author[Nikolaev]{Paul Nikolaev}
\address{Paul Nikolaev, University of Padova, Italy}
\email{paul.nikolaev@unipd.it}
\date{\today}
\begin{document}
\begin{abstract}We consider the asymptotic behaviour of the fluctuation process for large stochastic systems of interacting particles driven by both idiosyncratic and common noise with an interaction kernel \(k \in L^2(\R^d) \cap L^\infty(\R^d)\). Our analysis relies on uniform relative entropy estimates and Kolmogorov's compactness criterion to establish tightness and convergence of the fluctuation process. In this framework, an extension of the exponential law of large numbers is used to derive the necessary uniform estimates, while a conditional Fubini theorem is employed in the identification of the limit in the presence of common noise. We demonstrate that the fluctuation process converges in distribution to the unique solution of a linear stochastic evolution equation. This work extends previous fluctuation results beyond the classical Lipschitz framework.
\end{abstract}

\maketitle

\noindent \textbf{Key words:} common noise, interacting particle systems, Gaussian fluctuation, central limit theorem, propagation of chaos with common noise, relative entropy.

\noindent \textbf{MSC 2010 Classification:} 60H15, 60K35. 

\maketitle

\section{Introduction}
In this article, we consider interacting particle systems characterized by the systems of stochastic differential equations (SDE's) on the whole space \(\R^d\) 
\begin{equation} \label{eq: interacting_particle_system}
\Id X_t^{i} = - \frac{1}{N}\sum\limits_{j=1}^N k(X_t^{i}-X_t^{j}) \Id t + \sigma(t,X_t^{i}) \Id B_t^{i} + \nu(t,X_t^{i}) \Id W_t 
\end{equation}
driven by idiosyncratic noise \((B_t^{i}, t \ge 0)\), \(i \in \N\), and common noise \( (W_t, t \ge 0)\), both represented by multi-dimensional Brownian motions, some diffusion coefficients \(\sigma, \nu\) and some interaction kernel \(k\). The Brownian motions \((B_t^{i}, t \ge 0)\) are independent of each other, and \( (W_t, t\ge 0)\) is independent of \((B_t^{i}, t \ge 0)\) for all \(i\). Details of the probabilistic setting are provided in Section~\ref{sec: setting}.
Interacting particle systems pertubated by common noise like~\eqref{eq: interacting_particle_system} are commonly utilized in the field of mean-field games~\cite[Section 2.1]{CarmonaReneDElarue2018} as well as mathematical finance~\cite{Ahuja2016,Lacker2020,Hammersley2020,Shkolnikov2023}.
More precisely, many systems do not only experience idiosyncratic shocks but all player are also exposed to aggregated shocks (\(\nu \neq 0\)), which provides a more realistic view of social and biological
phenomena such as herding, bird flocking, fish schooling and interaction of agents. For instance, the understanding of common noise plays a crucial role in financial markets~\cite{Carmona2015} and mean-field games~\cite{Gueant2011,Gomes2014}.

Our aim is to investigate the asymptotic behavior of the system~\eqref{eq: interacting_particle_system} in the fluctuation scaling, which can be read as 
\begin{equation} \label{eq: fluctuation_scaling}
    \sqrt{N} \big( \mathrm{empirical \;  process \;  of}~\eqref{eq: interacting_particle_system} - \mathrm{mean} \textnormal{-} \mathrm{field \; density} )  . 
\end{equation}
Hence, we want to describe the deviations of the empirical measure 
\begin{equation}\label{eq: empirical_measure}
    \mu_t^N:= \frac{1}{N} \sum\limits_{i=1}^N \delta_{X_t^{i}}
\end{equation}
from the mean-field density, which satisfies the following stochastic Fokker--Planck equation 
\begin{align} \label{eq: chaotic_spde}
\begin{split}
\Id \rho_t
=& \;  \nabla \cdot (( k*\rho_t) \rho_t ))  \Id t 
- \nabla \cdot (\nu_t  \rho_t \Id W_t)  \\
&\; + \frac{1}{2}  \sum\limits_{i,j=1}^d \partial_{x_{i}}\partial_{x_{j}} \bigg( ( [\sigma_t\sigma_t^{\mathrm{T}}]_{(i,j)} + [\nu_t \nu_t^{\mathrm{T}}]_{(i,j)} ) \rho_t \bigg)  \Id t, 
\end{split}
\end{align}
where \([A]_{(i,j)}\) denotes the \(i,j\)-th entry of a matrix \(A\).  
The derivation of the mean-field density lies outside the scope of this article. We refer to the articles~\cite{Coghi2019,nikolaev2024common} for explicit well-posedness results.  
The main challenge in the presence of common noise is the stochastic nature of the limiting equation. While in the classical mean-field limit (\(\nu = 0\)), the limiting equation~\eqref{eq: chaotic_spde} is a deterministic parabolic partial differential equation, in our case, it remains stochastic. As a result, the lack of structure in the probability space introduces additional difficulties. These challenges were addressed in~\cite{CarmonaReneDElarue2018, CoghiFlandoli2016, nikolaev2024common, Shao2024}.

Hence, the main object of this article is the fluctuation process 
\((\eta_t^N, t \ge 0)\) given by 
\begin{equation} \label{eq: fluctuation_process}
    \eta_t^N := \sqrt{N} ( \mu_t^N -\rho_t)
\end{equation}
and its asymptotic behavior for \(N\to \infty\). 

In the case \(\nu = 0\), Wang, Zhao, and Zhu~\cite{Xianliang2023} studied the limit of \((\eta^N, N \in \N)\) on the torus \(\mathbb{T}^d\) with additive noise, i.e. \(\sigma = \mathrm{const.}\) and a bounded or symmetric kernel \(k\), which satisfies \(|\cdot | k(\cdot) \in L^1(\mathbb{T}^d)\), by utilizing relative entropy estimates for interacting particle systems, introduced by Jabin and Wang~\cite{JabinWangZhenfu2016, JabinWang2018}.

Our main Theorem extends the above result~\cite{Xianliang2023} to the unbounded domain \(\R^d\) and includes common noise \(W\) of multiplicative type. The classical methods in the common noise setting~\cite{Lacker2019} is no longer applicable, since the interaction kernel is no longer Lipschitz and the classical coupling method~\cite{snitzman_propagation_of_chaos} to produce quantitative estimates for the mean-field limit fails. 
Instead, we rely on the recently established relative entropy estimates in the common noise setting~\cite{nikolaev2024common}, which give rise to Assumption~\ref{ass: entropy} below. Furthermore, the presence of common noise alters the limit by disrupting the Gaussianity~\cite{Xianliang2023} of the limiting process, introducing additional challenges in its identification.

Let us summarize the main Theorem~\ref{theorem: main} of this article in a simplified version, which for the sake of presentation is stated
somewhat loosely. 
\begin{theorem}[Main Theorem (Informal Version)] \label{theorem: informal}
    Suppose that \((X_0^{i}, i \in \N)\) is i.i.d. with regular enough density \(\rho_0 \). Let \(k \in L^2(\R^d) \cap L^\infty(\R^d) \) The the sequence of measures \((\eta^N, N \in \N)\) of the interacting particle system~\eqref{eq: interacting_particle_system} converges in distribution in the space 
    \begin{equation*}
        L^2([0,T];H^{-\alpha}(\R^d)) \cap C([0,T]; H^{-\alpha-2}(\R^d))
    \end{equation*}
    for \( d/2 < \alpha \) towards the unique fluctuation SPDE~\eqref{eq: limiting_spde}. 
\end{theorem}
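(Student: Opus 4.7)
The plan is the classical three-step program: derive an SPDE for $\eta^N$ by It\^o's formula, prove tightness in the prescribed path space, and identify every accumulation point with the unique solution of~\eqref{eq: limiting_spde}. First, I would apply It\^o to $\phi(X_t^i)$ for smooth $\phi$, average in $i$, and subtract the weak form of~\eqref{eq: chaotic_spde}; writing $\mu_t^N=\rho_t+N^{-1/2}\eta_t^N$ and linearizing $\nabla\cdot((k*\mu_t^N)\mu_t^N)$ around $\rho_t$, multiplication by $\sqrt N$ yields
\begin{align*}
\d\qv{\eta_t^N,\phi}
&= -\qv{\eta_t^N,\nabla\phi\cdot(k*\rho_t)}\d t - \qv{\rho_t,\nabla\phi\cdot(k*\eta_t^N)}\d t \\
&\quad + \tfrac12\qv{\eta_t^N,\mathrm{tr}[(\sigma_t\sigma_t^{\mathrm T}+\nu_t\nu_t^{\mathrm T})D^2\phi]}\d t \\
&\quad + \qv{\eta_t^N,\nabla\phi\cdot\nu_t}\d W_t + \d M_t^N(\phi) - N^{-1/2}\qv{\eta_t^N,\nabla\phi\cdot(k*\eta_t^N)}\d t,
\end{align*}
where $M^N(\phi)$ is the idiosyncratic martingale with bracket $\tfrac1N\sum_i(\nabla\phi\cdot\sigma_t)^{\otimes 2}(X_t^i)\d t$ and the last term is the quadratic remainder $R_t^N(\phi)$.

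Next, for tightness in $L^2([0,T];H^{-\alpha})\cap C([0,T];H^{-\alpha-2})$ I would establish (i) a uniform moment bound $\sup_N\E\|\eta^N\|_{L^2_t H^{-\alpha}}^2<\infty$, and (ii) a Kolmogorov-type time-increment bound $\E\|\eta^N_{t+h}-\eta^N_t\|_{H^{-\alpha-2}}^p\lesssim h^{p/2}$ by treating each term of the above SPDE in turn. Bound (i) is obtained by expanding $\|\eta^N\|_{H^{-\alpha}}^2$ in an orthonormal basis of $H^\alpha$ and reducing to the uniform estimate $N\,\E[(\mu_t^N-\rho_t)(\phi_n)]^2\lesssim 1$; here Assumption~\ref{ass: entropy}, namely the common-noise relative-entropy estimate of~\cite{nikolaev2024common}, is combined with the extension of the exponential law of large numbers needed to control $\|k*\mu^N\|_\infty$ for $k\in L^2\cap L^\infty$ (classical coupling is unavailable since $k$ is not Lipschitz). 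The Sobolev window $d/2<\alpha\ll d/2+1$ is dictated by the embedding $H^\alpha\hookrightarrow L^\infty$, which makes every duality pairing with $\mu^N$ and $\rho_t$ meaningful, and by a compact embedding needed to compile (i) and (ii) into tightness in the stated space.

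For identification, along a convergent subsequence $\eta^{N_k}\Rightarrow\eta$ I would pass to the limit in the martingale-problem form above. The remainder $R^N(\phi)$ vanishes in probability by (i) and $\|k\|_{L^\infty}<\infty$; the idiosyncratic martingale $M^N(\phi)$ converges, conditionally on $W$, to a centered Gaussian martingale with bracket $\int_0^t\qv{\rho_s,(\nabla\phi\cdot\sigma_s)^{\otimes 2}}\d s$; and the common-noise integral $\int_0^\cdot\qv{\eta_s^N,\nabla\phi\cdot\nu_s}\d W_s$ is identified via the conditional Fubini theorem, which is precisely the device that makes the duality pairing with the limit $\eta$ commute with the It\^o integral against $W$ even though the limit is no longer Gaussian. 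Uniqueness for the resulting linear SPDE~\eqref{eq: limiting_spde} then follows from a Gronwall energy estimate in $H^{-\alpha}$ using $k\in L^2\cap L^\infty$ and boundedness of $\sigma,\nu$, which promotes subsequential to full convergence in distribution.

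The main obstacle will be step~(i): producing the uniform $L^2_t H^{-\alpha}$ moment bound together with the smallness of $R^N$, because the roughness of $k$ rules out Lipschitz/coupling techniques and forces a delicate combination of the entropic estimate of~\cite{nikolaev2024common} with an exponential LLN sharp enough to absorb the $N^{-1/2}$ prefactor against $k*\eta^N$. A secondary difficulty is the identification of the $W$-integral: the lack of product structure on the common-noise probability space blocks naive Fubini arguments, and the conditional-Fubini device must be implemented rigorously relative to the $W$-filtration in order to close the limit passage for the stochastic integral.
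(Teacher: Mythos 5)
Your overall program coincides with the paper's: It\^o expansion of $\qv{\eta^N,\varphi}$, uniform bounds via the variational formula for relative entropy combined with an exponential law of large numbers (your orthonormal-basis expansion of $\|\eta^N\|_{H^{-\alpha}}^2$ is the Parseval/Fourier reduction used in Lemmas~\ref{lemma: uniform_estimate}--\ref{lemma: uniform_identification}), Kolmogorov tightness, Skorohod representation, martingale-problem identification with a conditional covariation formula for the common-noise and idiosyncratic parts, and finally uniqueness plus Yamada--Watanabe to upgrade subsequential to full convergence. Two points deserve attention. First, a quantitative one: Kolmogorov's criterion in $C([0,T];H^{-\alpha-2})$ forces you to control \emph{fourth} moments of increments, which in turn requires an exponential LLN for $\exp(N|\qv{\phi,\mu_N\otimes\mu_N}|^{4})$ rather than the square; this is exactly the strengthening in Lemma~\ref{lemma: modification_super}, and your second-moment bound (i) alone would not close the tightness argument.

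The genuine gap is in your uniqueness step. A ``Gronwall energy estimate in $H^{-\alpha}$'' does not close as stated, because the linearized drift loses a derivative: the terms $\nabla\cdot(\rho_t(k*v))$ and $\nabla\cdot(v(k*\rho_t))$ are controlled by $\|v\|_{H^{-\alpha+1}}$, not by $\|v\|_{H^{-\alpha}}$ (see the estimate in the proof of Theorem~\ref{theorem: strong-uniqu}), so the right-hand side of the would-be Gronwall inequality lives in a strictly stronger norm than the left. Moreover, the stochastic transport term $\nabla\cdot(\eta\,\nu^{\mathrm T}\d W_t)$ produces a second-order It\^o correction that must be cancelled against the $\nu\nu^{\mathrm T}$ part of the second-order operator, leaving only the elliptic $\sigma\sigma^{\mathrm T}$ contribution to generate coercivity. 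Both issues are resolved in the paper by invoking Krylov's $L^2$-theory for linear SPDEs (\cite[Theorem~5.1]{Krylov1999AnAA}), whose stochastic-parabolicity structure and parabolic smoothing absorb the one-derivative loss, together with a stopping-time localization in $\|\rho_t\|_{H^{a-1}}$ to make the drift coefficient bounded. If you insist on an energy method you would need to reproduce this coercivity bookkeeping by hand; a na\"ive Gronwall bound in a single fixed $H^{-\alpha}$ fails.
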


The restriction \(\alpha > d/2\) in the above theorem seems optimal, since this is precisely the range of \(\alpha\) for which probability measures can be embedded into Bessel potential spaces \(H^{-\alpha}(\R^d)\). Although, the regularity of the limiting process \(\rho\) can probably be improved by an argument similar to that used in~\cite[Section~3]{Xianliang2023}.

As already mentioned Theorem~\ref{theorem: informal} extends the results of~\cite{Xianliang2023} in two directions. Compared to~\cite{Xianliang2023}, we work on the whole space \(\R^d\) and consider multiplicative noise, as well as aggregated shocks introduced by the common noise term \(W\). This leads to new technical challenges that must be addressed. 

In particular, the identification of the limiting point of the sequence \((\eta^N, N \in \N)\) requires new arguments.

Moreover, the previous work implicitly relies on the fact that, in a bounded domain, we have \( L^p \subset \bigcap_{1 \leq q \leq p} L^q \), and therefore \( k \in L^\infty\) implies \(k\in L^1\), a property that crucially fails in unbounded domains. 
Moreover, several embedding theorems were used, which unfortunately are not available on \(\mathbb{R}^{d}\). One possible way to address this issue is to introduce weighted Sobolev spaces. On the one hand, the drawback is that negative Sobolev spaces are defined as the topological dual of the corresponding positive Sobolev space, so that the convenient representation via the Fourier transform or via Littlewood--Paley blocks is no longer available. On the other hand, weighted Sobolev spaces provide embedding theorems that allow us to characterize compact sets. We refer to~\cite{Jourdain1998} for further information on weighted Sobolev spaces and to~\cite[Inequalities~(0.3) and~(0.4)]{Jourdain1998} for the relevant embedding results.
Additionally, the compactness method, which is outlined in Section~\ref{sec: compactness_method} together with further comparison to the seminal paper~\cite{Xianliang2023}, relies on Kolmogorov's continuity theorem~\cite[Theorem 23.7]{Kallenberg2021}, as well as the crucial assumption concerning the tightness of the initial data \((\eta^N(0), N \in \mathbb{N})\). For more details we refer to Remark~\ref{remark: compactness_ness}. 
To overcome these challenges, we require a stronger version of the exponential law of large numbers~\cite[Lemma~2.3]{Xianliang2023}, which we present in Lemma~\ref{lemma: modification_super}.

Additionally, the approach developed here provides partial insight toward the Gaussian fluctuation result claimed in~\cite[Theorem~1.2]{Feng2023} for the two dimensional vortex model on the whole space \(\R^2\), corresponding to the case without common noise \(\nu = 0\). However, it appears that the result in~\cite[Theorem~1.2]{Feng2023} relies on additional assumptions that are not made explicit therein, and several of the technical challenges discussed above do not seem to be addressed.

Beyond these issues, our analysis indicates that a crucial step in the limiting procedure cannot be carried out within our framework. For transparency, we explicitly document which parts of our approach can be adapted to the vortex interaction model in Remark~\ref{remark: vortex_uniform},~\ref{remark: vortex_limit}. We also identify the point at which a fundamental obstruction arises in Remark~\ref{remark: vortex_stability}.
Hence, we believe this is still an open problem.

Another recent work on fluctuations, which we need to address is the result for the two-dimensional stochastic vortex model with common noise by Shao and Zhao~\cite{shao2025}. Similar to our approach, their work is a natural extension of their relative entropy estimates~\cite{Shao2024} for the stochastic vortex model, which plays a role analogous to our earlier work~\cite{nikolaev2024common}. Their analysis, like ours, is also primarily based on the techniques in~\cite{Xianliang2023}.

It is important to highlight the differences between their work and our current approach. As in the comparison between~\cite{Shao2024} and~\cite{nikolaev2024common}, key distinctions arise in the domain (\(\mathbb{T}^d\) vs. \(\mathbb{R}^d\)), the type of noise (transport noise vs. It\^o noise) and the regularity of the kernel \(k\). The challenges of working on an unbounded domain, particularly the use of \(k \in L^1\), have already been discussed in comparison to~\cite{Xianliang2023} and remain relevant here. Regarding the kernel, essential properties of the Biot--Savart law, such as its divergence-free nature and integrability on a bounded domain, play a crucial role~\cite{Shao2024,shao2025}. Moreover, while Shao and Zhao consider common noise, our work takes a more careful approach in Section~\ref{sec: identification_limit}, providing additional details on the augmentation of the filtration. In this regard, we refer to Lemma~\ref{lemma: brownian_motion} and the arguments therein.

Additionally, we completely avoid the reliance on \(L^2\)-convergence of the particle system to the McKean--Vlasov equation, as used in~\cite[Proposition~4.1]{shao2025}. At 
 the same time we eliminate the necessary of a strong solution to the McKean--Vlasov equation. Instead, we adopt the framework of~\cite{Lacker2019}, originally developed for smooth interaction kernels \(k\) and additive noise. This approach allows for a precise definition of the white noise term, which we identify in Lemma~\ref{lemma: characterization_martingale}. In particular, our demonstration of the conditional Gaussian property for the common noise term differs fundamentally, relying on distinct arguments.
Moreover, we provide Lemma~\ref{lemma: versions_of_stochastic_integrals} concerning the approximated common noise martingale term as a realization in the space \(H^{-\alpha}(\mathbb{R}^d)\), demonstrating the tightness of the common noise martingale in the Hilbert space \(H^{-\alpha}(\mathbb{R}^d)\).

\subsection{Methodology and difficulties} \label{sec: compactness_method}

In particular, the presence of common noise renders the limiting mean field equation random and disrupts the classical Gaussian structure of the fluctuation limit. As a consequence, the identification of the limiting object from the tightness of the fluctuation measures \((\eta^N, N \in \N)\) becomes substantially more delicate, since both the fluctuation process and the underlying mean field limit are random.
These issues necessitate new arguments in the Skorohod argument and limit identification steps, which cannot be obtained by a direct extension of the techniques in~\cite{Xianliang2023}. A more detailed analysis is postponed to Section

Our paper is closely related to the work of Wang, Zhao, and Zhu~\cite{Xianliang2023}. Both approaches rely on relative entropy estimates, originally established by Jabin and Wang~\cite{JabinWang2018} for the case \(\nu=0\) and later extended to the common noise setting by the author~\cite{nikolaev2024common}. The approach in~\cite{Xianliang2023} follows a classical compactness method, which has been employed in various works, including~\cite{Graham1996, Fernandez1997, Jourdain1998, Lacker2019}. Specifically, Wang, Zhao and Zhu utilize uniform entropy bounds on the relative entropy to demonstrate the tightness of the sequence of fluctuations \((\eta^N,N\in \N)\) in an appropriate Hilbert space.

 Let us sketch the general approach of the compactness method utilized in~\cite{Graham1996,Lacker2019,Xianliang2023} in order to demonstrate Gaussian fluctuations and where our approach varies. The same method is nicely summarized in~\cite{Hofmanova2012} for the weak existence of SDEs with bounded drift and diffusion. 
A crucial observation is that after an application of It\^{o}'s formula for \((\eta^N, N \in \N)\), we obtain  
\begin{align} \label{eq: etaN_spde}
    \Id \langle \eta_t^N, \varphi \rangle
    =&  \sqrt{N} \bigg( \langle \mu_t^N , \nabla \varphi \cdot (k*\mu_t^N) \rangle - \langle  \rho_t, \nabla \varphi  (k*\rho_t) \rangle  \bigg) \Id t 
    + \frac{1}{2} \langle \eta_t^N , \mathrm{Tr} \big( (\sigma \sigma^{\mathrm{T}} + \nu \nu^{\mathrm{T}} ) \nabla^2 \varphi \big) \rangle \Id t  \nonumber \\
    \quad &+ \langle \eta_t^N , \nu^{\mathrm{T}} \nabla \varphi \rangle \Id W_t + \frac{1}{\sqrt{N}} \sum\limits_{i=1}^N ( \sigma^{\mathrm{T}}_t(X_t^{i}) \nabla \varphi(X_t^{i}) ) \Id B_t^{i}
\end{align}
for some smooth function \(\varphi\). Hence, \((\eta_t^N, t \ge 0)\) solves informally the following SPDE
\begin{align} \label{eq: N_derivation}
\begin{split}
\Id \eta_t^N =& -  \nabla \cdot \Big(\eta_t (k*\rho_t^N) +  \rho_t (k*\eta_t^N) + \frac{1}{\sqrt{N}} \eta_t^N k*\eta_t^N \Big) \Id t    - \nabla \cdot (\eta^N_t  \nu^{\mathrm{T}} \Id W_t ) \\
& \quad
+  \frac{1}{2}  \sum\limits_{i,j=1}^d \partial_{z_{i}}\partial_{z_{j}} \bigg( ( [\sigma_t\sigma_t^{\mathrm{T}}]_{(i,j)} + [\nu_t \nu_t^{\mathrm{T}}]_{(i,j)} ) \eta_t^N \bigg)  \Id t 
 - \mathcal M_t^N ,
\end{split}
\end{align}
where \((\mathcal M_t^N, t \ge 0) \) is a martingale corresponding to the weighted sum of the stochastic integrals indexed by the Brownain motions \((B^{i}, i \in \N)\) in~\eqref{eq: etaN_spde}. 
Next, we deduce uniform estimates on \((\eta^N, N \in N)\) in an appropriated Hilbert spaces to utilize Kolmogorov's tightness criterion~\cite[Theorem~23.7]{Kallenberg2021} and Prokhorov's theorem. In this step, we utilize the idea introduced in~[13] of employing relative entropy estimates to derive uniform bounds and, consequently, tightness of the fluctuation sequence. In our setting, we also use relative entropy estimates in the presence of common noise, suitably adapted to the unbounded domain \(\R^d\), in order to obtain analogous uniform bounds. These arguments are carried out in Section~\ref{sec: uniform_estimates}.
The next step is to apply Skorohod's representation theorem, which allows one to show that the limit of \((\eta^N, N \in \N)\) satisfies the fluctuation limiting SPDE
\begin{align}\label{eq: limiting_spde}
\begin{split}
\Id \eta_t =& -  \nabla \cdot (\eta_t (k*\rho_t) ) - \nabla \cdot (\rho_t (k*\eta_t) )  \Id t - \nabla \cdot (\eta_t  \nu^{\mathrm{T}} \Id W_t ) \\
& \quad
+  \frac{1}{2}  \sum\limits_{i,j=1}^d \partial_{z_{i}}\partial_{z_{j}} \bigg( ( [\sigma_t\sigma_t^{\mathrm{T}}]_{(i,j)} + [\nu_t \nu_t^{\mathrm{T}}]_{(i,j)} ) \eta_t \bigg)  \Id t 
 - \nabla \cdot \big( \sigma^{\mathrm{T}} \sqrt{\rho_t} \xi ),
\end{split}
\end{align}
where \(\xi\) is the space time white noise. We will provide a precise definition of the SPDE~\eqref{eq: limiting_spde} later in Section~\ref{sec: setting}. In this step, we differ from~\cite{Xianliang2023} since we have additional stochasticity from the mean field limit. This leads to a technical challenges in the Skorohod argument. 

It is worth noting that the equation bears a resemblance to the highly singular (in the SPDE sense) Dean--Kawasaki equation~\cite{Dean96,Kawasaki94}. More precisely, if one replaces
\(\nabla \cdot \big( \sigma^{\mathrm{T}} \sqrt{\rho_t}\, \xi \big)\)
by
\(\nabla \cdot \big( \sigma^{\mathrm{T}} \sqrt{\eta_t}\, \xi \big)\),
one formally recovers the Dean--Kawasaki noise in our equation. Clearly, the Dean--Kawasaki equation describes a system with a fixed number of particles \(N\), whereas here we are interested in the limit \(N \to \infty\).

However, if we look at the \(N\)-particle equation~\eqref{eq: N_derivation} for constant diffusion coefficient \(\sigma\), we may formally replace the martingale term \(\mathcal M_t^N\) by the term \(\sigma \sqrt{\mu_t^N}\, \xi\), where \(\xi\) denotes space time white noise, since both terms share the same covariance structure. Linearizing around the mean-field limit \(\rho\), we obtain
\begin{equation*}
\sqrt{\mu_t^N}\, \xi = \sqrt{\rho_t}\, \xi + O(N^{-1/2}),
\end{equation*}
where the higher order terms vanish due to the quantitative mean-field limit. Hence, the fluctuation noise can be interpreted as the linearization of the Dean--Kawasaki noise around the mean-field limit, with higher order terms neglected due to the mean-field limit.

A crucial yet often overlooked step in Skorohod's representation theorem is the consideration of the new filtration after changing the probability space. Observe that the filtration must be complete and right-continuous to satisfy the usual assumptions, and all the processes must be adapted to the new filtration. This technical aspect is addressed, for instance, in Lemma~\ref{lemma: brownian_motion} and the subsequent results. 

We notice, that naively under a uniform bound on \((\eta^N, N \in \N)\) each term in~\eqref{eq: etaN_spde} should converge to the corresponding term in~\eqref{eq: limiting_spde} and 
\begin{equation}
    \mathcal M_t^N \to \nabla \cdot \big( \sigma^{\mathrm{T}} \sqrt{\rho_t} \xi ) , \quad \mathrm{as} \; N \to \infty. 
\end{equation}
Here, a major difference from the previous work~\cite{Xianliang2023} is that the right-hand side is no longer Gaussian prevents a direct adaptation of the identification step in~[13], and necessitates new techniques tailored to this setting. Instead, similar to the case studied in~\cite{Lacker2019}, the process is in a way only Gaussian when conditioned on the filtration \(\mathcal{F}^W\) generated by the common noise \(W\) (see Remark~\ref{remark: gaus}).
Hence, identifying the limit is a crucial and challenging task, which we address by employing a conditional covariation formula, as demonstrated in Lemma~\ref{lemma: quadratic_common_noise}.
In the final step, it remains to demonstrate that the SPDE~\eqref{eq: limiting_spde} is pathwise unique in the sense of SDEs. An application of Yamada--Watanabe's theorem completes the argument.

\subsection{Related literature}
In contrast to interacting particle systems driven solely by idiosyncratic noise~\cite{snitzman_propagation_of_chaos,JabinWang2018,RosenzweigSerfaty2023,galeati2024}, the literature on common noise remains relatively sparse. Early efforts on Gaussian fluctuations primarily focused on jump-type particle systems~\cite{McKean1975,Tanaka1982}, closely linked to the foundational work of~\cite{kac1956foundations}. In the special case where \(k=0,\; \nu=0,\; \sigma=\mathrm{const}\), It\^o demonstrated that the limit of the fluctuation process \((\eta^N_t, N \in \N)\) associated with the particle system~\eqref{eq: interacting_particle_system}, consisting of a sequence of independent one-dimensional Brownian motions, is itself Gaussian.

Subsequent work by  and M\'{e}l\'{e}ard extended~\cite{Fernandez1997} the study of fluctuations to the whole space for smooth interaction kernels \(k\), employing a Hilbert space framework and a compactness method outlined in the Introduction. Building on this approach, M\'{e}l\'{e}ard further investigated general McKean--Vlasov models~\cite{Graham1996} and, in collaboration with Jourdain~\cite{Jourdain1998}, explored moderate fluctuations, initially studied by Oelschl\"ager~\cite{Oeschlager1984,Oelschlager1990,Oelschlager1991,nikolaev2024}.
A closely related approach was later adopted by Wang, Zhao, and Zhu~\cite{Xianliang2023}, who derived Gaussian fluctuations on the torus, with a primary focus on the two-dimensional Biot--Savart kernel as a motivating example. As outlined in the Introduction, their key innovation lies in utilizing the relative entropy estimates from~\cite{JabinWang2018} to establish uniform bounds, which ultimately ensure the tightness of the fluctuation process \((\eta^N, N \in \N)\). 

There are numerous variations of the basic particle system~\eqref{eq: interacting_particle_system}. For instance, Lucon and Stannat~\cite{Stannat2016} studied its fluctuations under spatial constraints, while Grotto and Romito~\cite{Grotto2020} analyzed fluctuations in particle systems at stationarity. Moreover, Cecchin and Pelino~\cite{Cecchin2019} established a weak Gaussian fluctuation result for finite-state mean-field games. Additionally, fluctuations in the Coulomb gas, where the \(N\)-particle distribution follows a Gibbs measure, were investigated in~\cite{Serfaty2023}. For second-order systems, we refer to~\cite{Braun1977,Bernou2024} and the references therein.

Beyond studying convergence at individual time points \(t\), one can also examine fluctuations in the space \(C([0,T];\R^d)\), which is particularly relevant in the context of rough paths and pathwise propagation of chaos~\cite{Coghi2020,galeati2024}. An early contribution in this direction was made by Tanaka~\cite{Tanaka1984}, whose work remains a key inspiration for current research~\cite{galeati2024}.

Regarding common noise, Kurtz and Xiong demonstrated the convergence of the fluctuation process in a modified Schwartz space, where the common noise is modeled as white noise in the sense of Walsh~\cite{walsh1986}, and the coefficients satisfy at least Lipschitz regularity. As far as we know, apart from the later works~\cite{Lacker2019, shao2025}, this is the only result on fluctuations with common noise. Their approach combines martingale methods with a coupling technique. In our case, as well, the limiting process is no longer Gaussian but instead satisfies a stochastic evolution equation.

Extending these results to mean-field games, Delarue, Lacker, and Ramanan~\cite{Lacker2019} studied fluctuations in the presence of common noise under smooth coefficient assumptions. 
Due to the game nature of the interaction, their framework naturally requires strong regularity conditions on the coefficients. They utilize the seminal results on master equations to connect the particle system to an intermediate system through the master equation. In contrast, our particle system~\eqref{eq: interacting_particle_system} does not involve controls, eliminating the need for an intermediate system.

\smallskip
\noindent\textbf{Organization of the paper:}
In Section~\ref{sec: setting}, we provide the definitions of the particle systems and the associated SPDEs, along with an introduction to relative entropy and some preliminary results on the exponential law of large numbers~\cite{JabinWang2018}. In Section~\ref{sec: uniform_estimates}, we establish uniform estimates for the fluctuation process. Section~\ref{sec: tightness} uses the uniform bounds from Section~\ref{sec: uniform_estimates} to demonstrate the tightness of the fluctuation process~\eqref{eq: fluctuation_process}. Finally, in Section~\ref{sec: identification_limit}, we identify the limit as a solution of the SPDE~\eqref{eq: limiting_spde}. In combination with pathwise uniqueness, which we also establish in Section~\ref{sec: identification_limit}, we prove the Main Theorem~\ref{theorem: main}.

\section{Setting} \label{sec: setting}
We write a vector in \(\R^{dN}\) as \(x = (x_1, \ldots, x_N) \in \R^{dN} \), where \(x_i=(x_{i,1},\ldots,x_{i,d}) \in \R^d\). For a vector in \(\R^d\) we will use the variable \(z \in \R^d\). 
For a matrix \(A \in \R^{d \times d'}\) we denote the \((\alpha,\beta)\) entry as \([A]_{(\alpha,\beta)}\)
Throughout the entire paper, we use the generic constant \( C \) for inequalities, which may change from line to line at may depend on the dimension \(d\) and final time \(T\). For \( 1 \le p \le \infty\) we denote by \(L^p(\R^d)\) with norm \(\norm{\cdot}_{L^p(\R^d)}\) the vector space of measurable functions whose \(p\)-th power is Lebesgue integrable (with the standard modification for \(p = \infty\)), by \(\testfunctions{\R^d}\) the set of all infinitely differentiable functions with compact support on \(\R^d\) and by \(\mathcal{S}(\R^d)\) the set of all Schwartz functions.

Our main framework requires Bessel potential spaces. However, most of the Literature is formulated for Besov spaces \(B_{p,q}^s(\R^d)\), this is why we require to introduce the more general space.
We introduce the space of Schwartz distributions \(\mathcal{S}'(\R^d)\). We denote dual parings by \(\qv{\cdot, \cdot}\). For instance, for \(u \in \mathcal{S}', \; f \in \mathcal{S}\) we have \(\qv{u,f}  = u[f]\) and for a probability measure~\(\mu\) we have \(\qv{f,\mu}  = \int f \Id \mu\). The correct interpretation will be clear from the context but should not be confused with the scalar product \(\qv{\cdot, \cdot}_{L^2(\R^d)} \) in \(L^2(\R^d)\).

The Fourier transform \(\mathcal{F}[u]\) and the inverse Fourier transform \(\mathcal{F}^{-1}[u]\) for $u \in \mathcal{S}'(\R^d)$ and $ f \in \mathcal{S}(\R^d)$ are defined by
\begin{equation*}
  \qv{\mathcal{F}[u],f} := \qv{u, \mathcal{F}[f]},
\end{equation*}
where \(\mathcal{F}[f]\) and \(\mathcal{F}^{-1}[f]\) is given by
\begin{equation*}
  \mathcal{F}[f](\xi) := \frac{1}{(2\pi)^{d/2}} \int e^{-i \xi \cdot z } f(z) \Id z
  \quad \text{and}\quad
  \mathcal{F}^{-1}[f](\xi):= \frac{1}{(2\pi)^{d/2}} \int e^{i \xi \cdot z } f(z) \Id z .
\end{equation*} 

Let us now recall the Littlewood--Paley characterization of isotropic Besov spaces. A dyadic partition of unity $(\tilde \chi,\chi)$ in dimension $d$ is given by two smooth functions on $\R^d$ satisfying $\mathrm{supp}\;  \tilde \chi\subseteq \{x \in \R^d \, : \, |x| \le \frac{4}{3} \}$, $\mathrm{supp}\; \chi \subseteq\{x \in\R^d:\frac{3}{4}\le|x|\le\frac{8}{3}\}$ and $\tilde \chi(z)+\sum_{j \ge 0}\chi(2^{-j}z)=1$ for all $z\in\R^d$. We set
\begin{equation*}
  \chi_{-1}:=\tilde \chi\quad\text{and}\quad\chi_{j}:=\chi(2^{-j}\cdot)\quad\text{for }j\ge0.
\end{equation*}
Taking a dyadic partition of unity $(\tilde \chi,\chi)$ in dimension two, the \textit{Littlewood--Paley blocks} are defined as
\begin{equation*}
  \Delta_{-1}f:=\mathcal{F}^{-1}(\chi_{-1}\mathcal{F}f)\quad\text{and}\quad\Delta_{j}f:=\mathcal{F}^{-1}(\chi_{j}\mathcal{F}f)\quad\text{for }j\geq0.
\end{equation*}
Note that, by the Paley--Wiener--Schwartz theorem (see e.g. \cite[Theorem~1.2.1]{Triebel1978}), $\Delta_{j}f$ is a smooth function for every $j\geq-1$ and for every $f\in\mathcal{S}^{\prime}(\R^{2})$ we have 
\begin{equation*}
  f=\sum_{j\geq-1}\Delta_{j}f:=\lim_{j\to\infty}S_{j}f\quad\text{with}\quad S_{j}f:=\sum_{i\leq j-1}\Delta_{i}f.
\end{equation*}
For $s \in\mathbb{R}$ and $p,q\in(0,\infty)$ the Besov space \(B_{p,q}^s(\R^d)\) is defined as
\begin{align*}
    B_{p,q}^s(\R^d):&= \bigg\{ f \vert f \in \mathcal S'(\R^d), \norm{f}_{B_{p,q}^s(\R^d)} := \bigg(\sum\limits_{j=-1}^\infty 2^{sjq} \norm{\Delta_j f}_{L^p(\R^d)}^q \bigg)^{\frac{1}{q}}  < \infty \bigg\}. 
\end{align*}
We also use the natural modification for \(p,q=\infty\) for the Besov spaces \(B_{p,q}^s(\R^d)\). 

For each \(s \in \R\) we denoted the Bessel potential by \(J^s := (1-\Delta)^{s/2}f := \mathcal{F}^{-1}[(1+|\xi|^2)^{s/2} \mathcal{F}[f]] \) for \(f \in \mathcal{S}'(\R^d)\). We define the Bessel potential space \(\mathnormal{H}_p^s(\R^d) \) for \(p \in [1,\infty)\) and \(s \in \R\) by
\begin{equation*}
  H_p^s(\R^d):= \{ f \in \mathcal{S}'(\R^d) \; : \;  (1-\Delta)^{s/2} f \in L^p(\R^d) \}
\end{equation*}
with the norm
\begin{equation*}
  \norm{f}_{H^{s}_p(\R^d)}:= \norm{(1-\Delta)^{s/2}f}_{L^p(\R^d)}, \quad f \in  H_p^s(\R^d).
\end{equation*}
We make the following notational convention \(H^s(\R^d) :=H^s_2(\R^d) \). We remark that \(H^s(\R^d) = B_{2,2}^s(\R^d)=  F_{2,2}^s(\R^d)\)~\cite{Triebel1978}. 

\subsection{Probabilistic setting} \label{sec: prob_setting}
In this subsection we introduce the probabilistic setting, in particular, the \(N\)-particle system and the associated McKean--Vlasov equation. To that end, let \((\Omega, \mathcal{F}, ( \mathcal{F}_t, t \ge 0) , \P)\) be a complete probability space with right-continuous filtration \((\mathcal{F}_t, t \ge 0) \) supporting the following probabilistic objects. Let \((B_t^{i}=(B_t^{i,1},\ldots,B_t^{i,m}), t\ge 0)\), \( i=1, \ldots, N\) be independent \(m\)-dimensional Brownian motions with respect to \((\cF_t, t \ge 0)\) and \((W_t=(W_t^{1},\ldots,W_t^{\tilde{m}}), t\ge 0)\) be another \(\tilde{m}\)-dimensional Brownian motions with respect to \((\cF_t, t \ge 0)\), which is independent from \((B_t^{i}, t\ge 0)\), \( i=1, \ldots, N\). Moreover, we denote by \(\cF^W = (\cF^W_t ,t \ge 0)\) 
the augmented filtration generated by \(W\) and by
\(\cP^W\) the predictable \(\sigma\)-algebra with respect to \(\cF^W\). For the initial data we consider a sequence \((\zeta^{i}, i \in \N)\) of independent \(d\)-dimensional \(\cF_0\)-measurable random variables with density \(\rho_0\), which are independent of the Brownian motions \((B_t^{i}, t\ge 0)\), \( i=1, \ldots, N\) and the filtration \(\cF^W = (\cF^W_t ,t \ge 0)\). 
We require also the following coefficients 
\begin{equation*}
    k \colon \R^d \mapsto \R^d, \quad 
    \sigma \colon [0,T] \times \R^d \mapsto \R^{d \times m}, \quad 
    \nu \colon [0,T] \times \R^d \mapsto \R^{d \times \Tilde{m}}. 
\end{equation*}

Let \(Z\) be a Polish space. 
For a filtration \(\cG:=(\cG_t)_{t\ge 0}\), \(1 \le p \le \infty\) and \(0 \le s <t \le T\) we denote by \(L^p_{\cG}([s,t];Z)\) the set of \(Z\)-valued predictable processes \((X_u, u \in [s,t])\) with respect to \(\cG\) such that
\begin{align*}
  \norm{X}_{L^p_{\cG}([s,t];Z )}:=
  \begin{cases}
  \bigg( \E \bigg( \int\limits_{s}^t  \norm{X_u}_Z^p \Id u \bigg) \bigg)^{\frac{1}{p}}, \quad & p \in [1, \infty),   \\
  \sup\limits_{ (\omega,u)  \in \Omega \times [s,t]} \norm{X_u}_Z , \quad & p = \infty, 
  \end{cases}
\end{align*}
is finite. If the process is independent of the probability space, i.e. a function \([0,T] \to Z \), then we omit the filtration in the notation. 

Let \(\mu,\nu\) be two probability measures over \(Z\).
The relative entropy \(\mathcal H(\mu|\nu)\) is defined by
\begin{equation} \label{eq: intro_relative_entropy}
\mathcal{H}(\mu|\nu):=
\left\{
\begin{aligned}
&\int_E \frac{\d\mu}{\d \nu}\log\Big(\frac{\d \mu}{\d \nu}\Big)\d \nu,&\mu\ll\nu,\\
&\infty,&\mbox{otherwise,}
\end{aligned}
\right.
\end{equation}
where \(\tfrac{\d\mu}{\d \nu}\) denotes the Radon--Nikodym derivative. 

In order to compare the relative entropy in \(\R^{dN}\) we require the \(N\)-fold version of equation~\eqref{eq: chaotic_spde} as well as the conditional density of the interactive particle system~\eqref{eq: interacting_particle_system}. For each \(N\in \N\) let \((\rho_t^N,t \ge 0)\) denote the density of the particle system~\eqref{eq: interacting_particle_system} conditioned on \(\cF^W_t\). Then, as in~\cite{nikolaev2024common}, \((\rho_t^N,t \ge 0)\) solves the SPDE 
\begin{align} \label{eq: N_SPDE}
    \Id \rho_t^N
=& \;  \sum\limits_{i=1}^N \nabla_{x_i} \cdot \bigg( \frac{1}{N} \sum\limits_{j=1}^N k(x_i-x_j) \rho_t^N \bigg) \Id t 
- \sum\limits_{i=1}^N \nabla_{x_i} \cdot (\nu(t,x_{i})  \rho_t^N \Id W_t)  \nonumber \\
&\; + \frac{1}{2} \sum\limits_{i,j=1}^N \sum\limits_{\alpha,\beta=1}^d \partial_{x_{i,\alpha}}\partial_{x_{j,\beta}} \bigg( ( [\sigma(t,x_i)\sigma(t,x_j)^{\mathrm{T}}]_{(\alpha,\beta)} \delta_{i,j} + [\nu(t,x_i)\nu(t,x_j)^{\mathrm{T}}]_{(\alpha,\beta)} ) \rho_t^N  \bigg)  \Id t .
\end{align}
Similar, the \(N\)-fold product of \(\rho\) solves 

\begin{align} \label{eq: tensor_spde} 
\Id \rho_t^{\otimes N }
=& \;  \sum\limits_{i=1}^N \nabla_{x_i} \cdot (( k*\rho_t)(x_i) \rho_t^{\otimes N } ))  \Id t 
- \sum\limits_{i=1}^N \nabla_{x_i} \cdot (\nu(t,x_{i})  \rho_t^{\otimes N }  \Id W_t) \nonumber  \\
& + \frac{1}{2} \sum\limits_{i,j=1}^N \sum\limits_{\alpha,\beta=1}^d \partial_{x_{i,\alpha}}\partial_{x_{j,\beta}} \bigg( ( [\sigma(t,x_i)\sigma(t,x_j)^{\mathrm{T}}]_{(\alpha,\beta)} \delta_{i,j} + [\nu(t,x_i)\nu(t,x_j)^{\mathrm{T}}]_{(\alpha,\beta)} ) \rho_t^{\otimes N }  \bigg)  \Id t. 
\end{align}

In the following \(a\) will always denote a fixed constant, which is strictly greater than \(d/2+2\), i.e. 
\begin{equation*}
    a > \frac{d}{2}+2. 
\end{equation*}
Moreover, the variable \(\alpha\), will vary in the open interval \((\frac{d}{2}+2, a)\).

Next, we define the solution of SPDE~\eqref{eq: chaotic_spde}.
\begin{definition} \label{def: solution_spde}
    Let \(\alpha \in \R\). Given some probability space \((\Omega, \cF,\P)\) supporting a Brownian motion \((W_t, t \ge 0)\), we call a non-negative stochastic process \((\rho_t,t \ge 0)\) a solution to the SPDE~\eqref{eq: chaotic_spde} with values in the space \(H^{\alpha}(\R^d)\) with initial data \(\rho_0\), if 
    \begin{equation}
        \rho \in  L^2_{\cF^W}([0,T];H^{\alpha}(\R^{d}) ) 
    \end{equation}
    and, for any \(\varphi \in \testfunctions{\R^{d}}\), \(\rho\) satisfies almost surely the equation, for all \(t \in [0,T]\),
    \begin{align*} 
    \begin{split}
     \qv{\rho_t, \varphi}_{L^2(\R^{d})} 
    =&\;   \qv{\rho_0, \varphi}_{L^2(\R^{d})}
    -  \int\limits_0^t   \bigg \langle (k*\rho_s) \rho_s, \nabla_{x_i} \varphi \bigg \rangle \Id s  \\
   &    + \frac{1}{2} \sum\limits_{\alpha,\beta=1}^d  \int\limits_0^t \bigg\langle   [\sigma(s,z)\sigma(s,z)^{\mathrm{T}}]_{(\alpha,\beta)} \rho_s, \partial_{z_{\beta}}  \partial_{z_{\alpha}} \varphi \bigg\rangle \Id s \\
    &+  \frac{1}{2} \sum\limits_{\alpha,\beta=1}^d  \int\limits_0^t \bigg\langle   [\nu(s,z)\nu(s,z)^{\mathrm{T}}]_{(\alpha,\beta)} \rho_s, \partial_{z_{\beta}}  \partial_{z_{\alpha}} \varphi \bigg\rangle \Id s \\
    &    +  \sum\limits_{\alpha=1}^d \sum\limits_{\hat{l}=1}^{\tilde{m}} \int\limits_0^t \bigg\langle    \nu^{\alpha,\hat{l}}(s,z) \rho_s ,  \partial_{z_{\alpha}} \varphi \bigg\rangle \Id W_s^{\hat{l}}. 
    \end{split}
  \end{align*}
\end{definition}

\begin{definition} \label{def: limiting_spde}
We call \(\eta\) a martingale solution to the SPDE~\eqref{eq: limiting_spde} on some stochastic basis \((\Omega, \cF, (\cF_t, t \ge 0), \P)\) if,
\begin{enumerate}
\item The stochastic basis supports four processes \((\eta_t, t \ge 0)\), \((\rho_t, t \ge 0)\), \((W_t, t \ge 0)\) and \((\mathcal M_t,, t \ge 0)\). 
\item The process \((\eta_t, t \ge 0)\) is a continuous \(H^{-\alpha-2}(\R^d)\)-valued, \((\cF_t,t \ge 0)\) adapted process satisfying \(\eta \in L^2_{\cF}([0,T]; H^{-\alpha}(\R^d))\) for every \(a > \alpha > d/2\). 
    \item \(W\) is a \(\tilde m\) dimensional Brownian motion with respect to the filtration \((\cF_t, t \ge 0)\).  
    \item \((\rho_t,t \ge 0)\) is a solution of the SPDE~\eqref{eq: chaotic_spde} in \(H^a(\R^d)\). 
    \item The process \(\mathcal M\) conditioned on the filtration \((\cF_t^W, t \ge 0)\), which is generated by the process \((W_t, t \ge 0)\) is a continuous \((\cF_t, t \ge 0)\)-martingale with values in \(\bigcap\limits_{m \in \N} H^{-\frac{d}{2}-2-\frac{1}{m}}(\R^d)\) 
and its covariance is given by 
\begin{equation*}
    \E( \mathcal M_{t}(\varphi_1) \mathcal M_s(\varphi_2) \vert \cF_t^W  ) = \sum\limits_{l=1}^{m} \sum\limits_{q , \hat q =1}^d  \int\limits_0^{\min(s,t)} \langle \sigma_u^{q,l} \sigma_u^{\hat q, l} \partial_{x_q}\varphi \partial_{x_{\hat q }} \phi,  \rho_u \rangle  \Id u 
\end{equation*}
for each \(\varphi_1, \varphi_2 \in \testfunctions{\R^d}\). More precisely, let \(\varphi \in \testfunctions{\R^d}\). Then, for any real-valued bounded continuous function \(\gamma_1\) on \(C([0,s],\R^{\tilde m}) \) and real-valued bounded continuous function \(\gamma_2\) on \(\bigcap\limits_{m \in \N}  C([0,T],H^{-\frac{d}{2}-2-\frac{1}{m}}(\R^d)) \) we have 
\begin{equation*}
    \E( \gamma_1 (W) \gamma_2(\mathcal M_{|[0,s]}) (\mathcal M_t(\varphi) -\mathcal M_s(\varphi)) = 0.  
\end{equation*}
    
    \item For each \(\varphi \in \testfunctions{\R^d}\) and \(t \in [0,T]\) it holds \(\P\)-a.e. that 
    \begin{align}\label{eq: def_spde_weak_formulation}
\begin{split}
     \langle \eta_t, \varphi \rangle
    =& \langle \eta_0, \varphi \rangle+   \int\limits_0^t \langle \rho_s (k*\eta_s)  , \nabla \varphi \rangle + \langle  \eta_s (k*\rho_s) , \nabla \varphi  \rangle     \Id s
    +\int\limits_0^t  \langle \eta_s, \nu^{\mathrm{T}}_s \nabla \varphi \rangle \Id W_s  \\
    \quad & +  \frac{1}{2}\int\limits_0^t  \langle \eta_s , \mathrm{Tr} \big( (\sigma_s \sigma^{\mathrm{T}}_s + \nu_s \nu^{\mathrm{T}}_s ) \nabla^2 \varphi \big) \rangle \Id s + \mathcal M_t(\varphi), 
    \end{split}
\end{align}
\end{enumerate}

\end{definition}

\begin{remark} \label{remark: gaus}
The following observation concerning the process \(\mathcal M\) in Definition~\ref{def: solution_spde} can be made.
In the case without common noise, that is \(\nu = 0\), it is well known from the central limit theorem for interacting particle systems together with the martingale central limit theorem that the limiting fluctuation process \(\mathcal M\) is Gaussian. We refer for instance to~\cite{Fernandez1997,Xianliang2023}.

When \(\nu \neq 0\), the situation changes substantially. In this case, the quadratic variation and hence the covariance structure of \(\mathcal M\) depend on the solution \(\rho\) of the limiting SPDE, which is itself random due to the presence of the common noise. As a consequence, the process \(\mathcal M\) is no longer Gaussian in an unconditional sense.

However, if the realization of the common noise \(W\) is fixed, then the path of \(\rho\) is determined, and therefore the covariance structure of \(\mathcal M\) becomes deterministic. In this conditional framework, the martingale central limit theorem implies that \(\mathcal M\) is a Gaussian process conditionally on the common noise \(W\). For this reason, we say that \(\mathcal M\) is Gaussian conditionally on \(W\).

We refer to~\cite{Lacker2019} for a detailed analysis of this phenomenon in the closely related setting of mean field games with common noise and smooth coefficients. From this perspective, our main result, Theorem~\ref{theorem: informal}, shows that in the fluctuation limit the particles exhibit Gaussian behavior once the random environment generated by the common noise is fixed. This is consistent with the classical picture in the absence of common noise, as discussed for example in~\cite{Xianliang2023}.

Finally, we emphasize that it is the solution \(\eta\) of the linear fluctuation SPDE admits the structure of an infinite dimensional Ornstein--Uhlenbeck process. Indeed, \(\eta\) solves a linear SPDE with driving noise given by the conditional martingale \(\mathcal M\). In the case \(\nu = 0\), the coefficients of this linear equation are deterministic, and \(\eta\) is an infinite dimensional Ornstein--Uhlenbeck process in the classical sense, and in particular a Gaussian process. When \(\nu \neq 0\), the coefficients of the linear SPDE depend on the random solution \(\rho\) of the limiting SPDE, and therefore the Ornstein--Uhlenbeck dynamics evolves in a random environment. In this case, \(\eta\) is no longer Gaussian unconditionally, but it is Gaussian conditionally on the common noise \(W\). 

\end{remark}

\begin{definition}[Pathwise uniqueness for SPDE~\eqref{eq: chaotic_spde}]
    Let \(\alpha \in \R\). We say pathwise uniqueness holds in \(H^\alpha(\R^d)\) for the SPDE~\eqref{eq: chaotic_spde}, if for any two solutions \(\rho^1,\rho^2\) on the same probability space \((\Omega, \cF, \P)\) with the same Brwonian motion \((W_t, t \ge 0)\) the properties in Definition~\ref{def: solution_spde} hold for \(\rho^1,\rho^2\) and 
    \begin{equation*}
        \P \Big(\sup\limits_{0 \le t \le T} \norm{\rho^1_t-\rho^2_t}_{H^{\alpha}(\R^d)} = 0\Big)=1. 
    \end{equation*}
\end{definition}

\begin{definition}[Pathwise Uniqueness for SPDE~\eqref{eq: limiting_spde}] \label{def: pathwise_unique}
We say that pathwise uniqueness holds for the SPDE \eqref{eq: limiting_spde}, if for any two martingale solutions \((\eta_t^1, t \ge 0)\) and \((\eta_t^2, t \ge 0)\) on the same stochastic basis \((\Omega, \mathcal{F}, \mathbb{P})\), with the same Brownian motions \((W_t, t \ge 0))\), \((M_t, t \ge 0 )\), and the same initial data \(\eta_0 \in \bigcap_{m \in \mathbb{N}} H^{-d/2 -2- \frac{1}{m}}(\R^d)\), and same solution of the stochastic Fokker--Planck \((\rho_t, t \ge 0)\), it holds that for every \(a>  \alpha > d/2+2\),
\[
\mathbb{P} \Big( \sup_{t \in [0,T]} \|\eta_t^1 - \eta_t^2 \|_{H^{-\alpha}(\R^d)} = 0 \Big) = 1.
\]
\end{definition}

\subsection{Main result and assumptions}
Throughout the article we require the following set of assumptions on the interaction kernel and diffusion coefficients. 
\begin{assumption}[Assumptions on the coefficients] \label{ass: main} \
    \begin{enumerate}
        \item The diffusion coefficients \(\sigma , \nu\) lie uniformly in the space \( B^{a}_{\infty, \infty}\) for \(a > d+2\), i.e,
        \begin{equation*}
            \max\limits_{i,l,\tilde l} \sup\limits_{0 \le t \le T} \bigg( \norm{\sigma^{i,l}(t,\cdot)}_{B_{\infty,\infty}^a(\R^d)}+ \norm{\nu^{i,\tilde l}(t,\cdot)}_{B_{\infty,\infty}^a(\R^d)}\bigg) \le C 
        \end{equation*}
        for some positive constant \(C>0\). 
        \item \label{item: ellipticity_idiosy} \(\sigma\) satisfies the ellipticity condition. For all \(\lambda \in \R^d\) we have 
\begin{equation*} 
\sum\limits_{\alpha,\beta=1}^d      [\sigma_s(z)\sigma_s(z)^{\mathrm{T}}]_{(\alpha,\beta)}    \lambda_{\alpha} \lambda_{\beta} \ge \delta |\lambda|^2 . 
\end{equation*}
        \item The interaction kernel satisfies \(k \in L^2(\R^d) \cap L^\infty(\R^d)\).  
    \end{enumerate}
\end{assumption}

 Under the above assumption and using Zvonkin's transformation, we obtain strong existence and uniqueness for the interacting particle system~\eqref{eq: interacting_particle_system}. For instance, see~\cite{hao2022} in the case \(\nu = 0\). 

Regarding Assumption~\ref{ass: main}~(i), it is well known that the Besov space \(B^{s}_{\infty,\infty}(\R^d)\) coincides with the Zygmund space \(\mathcal{C}^s(\R^d)\) for \(s > 0\) (see, e.g.,~\cite[Theorem~2.5.7]{Triebel1978}). Consequently, a stronger alternative to Assumption~\ref{ass: main}~(i) would be for instance to require that \(\sigma\) and \(\nu\) have bounded and continuous derivatives up to order \(\lfloor a \rfloor + 1\).

Additionally, we need assumptions for the initial data and the mean-field limits SPDE~\eqref{eq: chaotic_spde} throughout this article. 

\begin{assumption}[Assumptions on initial data and SPDE~\eqref{eq: chaotic_spde}] \label{ass: entropy} \
\begin{enumerate}[label = (A\arabic*)] 
\item CLT for initial values:
There exists \(\eta_0\), which belongs to the space of tempered distributions \(\mathcal{S}'(\mathbb{R}^d)\), such that the sequence \((\eta^N(0), N \in \N)\) converges in law to \(\eta(0)\) in \(H^{-\alpha}(\R^d)\) for \(\alpha > d/2+2\).  
\item There exists a pathwise unique solution \((\rho_t, t \ge 0)\) in the space \(H^a(\R^d)\) in the sense of Definition~\ref{def: solution_spde} above. 
 \item The relative entropy is uniformly bounded 
        \begin{equation}
           \sup\limits_{N \in \N} \sup\limits_{0 \le t \le T } \E \Big( \mathcal H(\rho_t^N \vert \rho_t^{\otimes N} )  \Big) < \infty . 
        \end{equation}
\end{enumerate}
\end{assumption}
\begin{remark} \label{remark: continuity_of_spde}
    From the existence of \(\rho\) we obtain that \(\rho \in C([0,T], H^{a-1}(\R^d)),\;\P\text{-a.e.}\) and 
    \begin{equation*}
    \E\big( \sup\limits_{0 \le t \le T} \norm{\rho_t}_{H^{a-1}(\R^d)}^2 \big) \le C 
    \end{equation*}
    by applying~\cite[Theorem~7.1]{Krylov1999AnAA}. 
\end{remark}

\begin{remark}\label{remark: compactness_ness}
 Notice that, in contrast to the article~\cite{Xianliang2023}, we require stronger convergence in \(H^{-\alpha}(\mathbb{R}^d)\) instead of \(\mathcal S'(\R^d)\). The main reason is that, by Prokhorov's theorem, the sequence \((\eta_0^N)_{N \in \mathbb{N}}\) is tight in \(H^{-\alpha}(\mathbb{R}^d)\) for every \(a > \alpha > d/2+2\). Even though we obtain uniform bounds on the \(H^{-\alpha}(\mathbb{R}^d)\)-norms, we cannot directly conclude compactness, since \(H^{-\alpha}(\mathbb{R}^d)\) is infinite-dimensional and the unit ball is not compact. 

 Instead, we need to identify a compact embedding. For a bounded smooth domain \(U\), the compact embedding \(H^{-\alpha}(U) \hookrightarrow H^{-\alpha'}(U)\) holds for \(\alpha' < \alpha\), but this fails in the case \(U = \mathbb{R}^d\). To overcome this difficulty in the unbounded domain setting, we impose this stronger assumption. 

An alternative approach would be to formulate our results in a different norm corresponding to weighted Sobolev spaces, where the weight is given by the Bessel potential \((I-\Delta)^{-a/2}\). Such spaces are frequently used in the mean-field community, and we refer to~\cite{Graham1996,Jourdain1998,Lacker2019} for a more detailed explanation.

\end{remark}
\begin{remark}
     Recently the author derived the relative entropy bound~\cite{nikolaev2024common} in our setting under some additional assumption on the initial condition \(\rho_0\) and coefficients \(\sigma, \nu\), utilizing the methods presented by Jabin Wang~\cite{JabinWangZhenfu2016,JabinWang2018}. 
Notice, that going through a bootstrap argument for the solution constructed in~\cite{nikolaev2024common}  with regular initial data \(\rho_0\) we can guarantee a solution satisfying the above Assumption. Since, regularity of non-local, non-linear SPDE's is outside the scope of this article, we refer to~\cite{Krylov1999AnAA,Hammersley2020,Veraar2022,nikolaev2023hk, nikolaev2024common} to some results on the well-posedness theory. 
\end{remark}

\begin{example}
An example for a kernel \(k\), which satisfies Assumption~\ref{ass: main} and~\ref{ass: entropy} is the Hegselmann--Krause kernel \(k = \indicator{|x| \le R }(x) |x| \) for some confidence radius \(R > 0\). The associated dynamics constitute an opinion formation model, more precisely a prototypical bounded confidence model, where agents only interact with others whose opinions lie within distance \(R\). 
See for instance~\cite{hegselmann2002,noorazar2020recent,nikolaev2023hk} for a more detailed analysis.
\end{example}

\begin{theorem}\label{theorem: main}[Main Theorem]
    Suppose that \((X_0^{i}, i \in \N)\) is i.i.d. and Assumptions~\ref{ass: main} and Assumptions~\ref{ass: entropy} hold. Then the sequence of measures \((\eta^N, N \in \N)\) of the interacting particle system~\eqref{eq: interacting_particle_system} converges in distribution in the space 
    \begin{equation*}
        L^2([0,T];H^{-\alpha}(\R^d)) \cap C([0,T]; H^{-\alpha-2}(\R^d))
    \end{equation*}
    for \( d/2 < \alpha <  a-2\) towards the unique solution \(\eta\) in the sense of Definition~\ref{def: pathwise_unique} of the fluctuation SPDE~\eqref{eq: limiting_spde}.  
\end{theorem}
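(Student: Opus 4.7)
The plan is to execute the compactness programme outlined in Section~\ref{sec: compactness_method} in five steps, assembling the ingredients developed in Sections~\ref{sec: uniform_estimates}--\ref{sec: identification_limit}. First, applying Itô's formula to $\varphi(X_t^i)$, summing over $i$, and subtracting the weak form of~\eqref{eq: chaotic_spde} tested against $\varphi$ yields the semimartingale decomposition~\eqref{eq: etaN_spde} for $(\langle \eta_t^N,\varphi\rangle)$. The crucial algebraic manipulation is to split
\begin{equation*}
\sqrt{N}\bigl(\langle \mu_t^N,\nabla\varphi\cdot(k*\mu_t^N)\rangle - \langle \rho_t,\nabla\varphi\cdot(k*\rho_t)\rangle\bigr) = \langle \eta_t^N,\nabla\varphi\cdot(k*\rho_t)\rangle + \langle \rho_t,\nabla\varphi\cdot(k*\eta_t^N)\rangle + N^{-1/2}\langle \eta_t^N,\nabla\varphi\cdot(k*\eta_t^N)\rangle,
\end{equation*}
isolating a linear part in $\eta^N$ from a quadratic remainder of order $N^{-1/2}$.

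Second, Section~\ref{sec: uniform_estimates} will supply the uniform estimate $\sup_N \mathbb{E}\int_0^T \|\eta_t^N\|_{H^{-\alpha}(\mathbb{R}^d)}^2 \, dt < \infty$ for any $\alpha > d/2$. This rests on the entropy bound in Assumption~\ref{ass: entropy} combined with the strengthened exponential law of large numbers (Lemma~\ref{lemma: modification_super}), which is precisely what one needs to control the quadratic remainder and the nonlinear drift on the unbounded space $\mathbb{R}^d$, where $k\in L^2\cap L^\infty$ does not imply $k\in L^1$. Together with Kolmogorov's continuity criterion applied to $t\mapsto \langle \eta_t^N,\varphi\rangle$ for a countable dense family of test functions, and the tightness of $(\eta_0^N)$ in $H^{-\alpha}$ from Assumption~\ref{ass: entropy}, this delivers tightness of $(\eta^N)$ in $L^2([0,T];H^{-\alpha})\cap C([0,T];H^{-\alpha-2})$ as carried out in Section~\ref{sec: tightness}.

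Third, I apply Prokhorov and Skorohod's representation theorem to the joint sequence $(\eta^N, W, \rho, \mathcal{M}^N)$ to obtain, on a new probability space, almost sure convergence to a limit $(\eta, W, \rho, \mathcal{M})$ with the same laws. Here one must augment the filtration carefully and verify that $W$ remains a Brownian motion with respect to the enlarged filtration, in the spirit of Lemma~\ref{lemma: brownian_motion}, while $\rho$ continues to solve~\eqref{eq: chaotic_spde}. Passing to the limit in each term of~\eqref{eq: etaN_spde} is routine for the drift and the second-order term since they are linear and continuous in $\eta^N$ in the topology of tightness; the quadratic remainder vanishes thanks to the uniform $L^2_tH^{-\alpha}_x$ bound combined with the $N^{-1/2}$ prefactor; and the common-noise stochastic integral passes to the limit via Lemma~\ref{lemma: versions_of_stochastic_integrals}.

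Fourth, the main obstacle is identifying the limit $\mathcal{M}$ of the idiosyncratic martingale $\mathcal{M}^N$: it is not Gaussian, only conditionally Gaussian given $\mathcal{F}^W$, which is a genuinely new feature compared to~\cite{Xianliang2023}. This is handled in Section~\ref{sec: identification_limit} by computing the quadratic variation of $\mathcal{M}^N$ and passing to the limit via Lemma~\ref{lemma: quadratic_common_noise} and Lemma~\ref{lemma: characterization_martingale}, yielding
\begin{equation*}
\mathbb{E}\bigl(\mathcal{M}_t(\varphi_1)\mathcal{M}_s(\varphi_2)\,\big|\,\mathcal{F}_t^W\bigr) = \sum_{l=1}^m\sum_{q,\hat q=1}^d\int_0^{s\wedge t}\langle \sigma_u^{q,l}\sigma_u^{\hat q,l}\partial_{x_q}\varphi_1\partial_{x_{\hat q}}\varphi_2,\rho_u\rangle\, du,
\end{equation*}
matching Definition~\ref{def: limiting_spde}(5). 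Finally, pathwise uniqueness for the linear stochastic evolution equation~\eqref{eq: limiting_spde} (with $\rho$ frozen as a coefficient) follows from an energy estimate on the difference of two solutions in $H^{-\alpha}$, and a Yamada--Watanabe argument then upgrades tightness together with uniqueness in law to convergence in distribution of the full sequence $(\eta^N)_{N\in\mathbb{N}}$ to the unique martingale solution.
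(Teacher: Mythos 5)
Your proposal follows essentially the same route as the paper: the decomposition \eqref{eq: etaN_spde}, uniform bounds from the relative entropy and the strengthened exponential law of large numbers, Kolmogorov/Prokhorov/Skorohod, careful filtration augmentation, identification of the conditionally Gaussian martingale, pathwise uniqueness, and Yamada--Watanabe to upgrade subsequential convergence to convergence of the whole sequence. One point of caution: in your third step you justify the vanishing of the quadratic remainder $N^{-1/2}\langle \eta_t^N,\nabla\varphi\cdot(k*\eta_t^N)\rangle$ by "the uniform $L^2_tH^{-\alpha}_x$ bound combined with the $N^{-1/2}$ prefactor", but this pairing cannot be estimated by $\|\eta_t^N\|_{H^{-\alpha}}^2$ alone, since $\nabla\varphi\cdot(k*\eta_t^N)$ does not lie in $H^{\alpha}(\R^d)$ when $\eta_t^N$ has regularity $-\alpha<-d/2$ and $k$ is merely $L^2\cap L^\infty$; the correct tool is Lemma~\ref{lemma: uniform_identification} (an application of the exponential law of large numbers of Jabin--Wang), which gives $\E|\langle\nabla\varphi\cdot k*(\mu_t^N-\rho_t),\mu_t^N-\rho_t\rangle|\le C/N$ directly, exactly as you correctly anticipate in your second step. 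Likewise, "routine" passage to the limit in the linear drift terms in fact requires the Besov multiplication and Young convolution inequalities (Theorem~\ref{theorem: holder}, Lemma~\ref{lemma: young}) together with $k\in L^2(\R^d)$ and the regularity of $\rho$, and pathwise uniqueness is obtained in the paper via Krylov's result with a stopping-time localization of $\|\rho_t\|_{H^{a-1}}$ rather than a bare energy estimate; these are refinements of, not departures from, your outline.
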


%
%{\color{red}
%\begin{corollary}\label{cor: vortex}
%Let \(k_{\mathrm{vortex}} =  - \tfrac{1}{2\pi} \tfrac{x^{\perp}}{|x|^2}\) be the Biot--Savart law in \(\R^2\).
%Moreover let \((\rho_t, t \ge 0)\) be a solution of the Fokker--Planck equation~\eqref{eq: chaotic_spde} with \(\nu =0\) and \(\sigma > 0\) being a constant with regularity \(\rho \in C([0,T]; H^{a}(\R^d)) \cap C([0,T]; B_{\infty,\infty}^{a}(\R^d))\). 
%Moreover, let Assumption~\((A1)\) hold true. Then, \((\eta^N, N \in \N)\) converges in  
%\begin{equation*}
%        L^2([0,T];H^{-\alpha}(\R^d)) \cap C([0,T]; H^{-\alpha-2}(\R^d))
%    \end{equation*}
%for every \(\alpha > 1\) towards the unique solution in the sense of Definition~\ref{def: pathwise_unique} of the fluctuation SPDE~\eqref{eq: limiting_spde}. 
%\end{corollary}
%}

\subsection{Preliminary results}
Let us recall some crucial results from the seminal work~\cite{JabinWang2018,Xianliang2023} and provide an extension of the law of large numbers~\cite[Lemma~2.3]{Xianliang2023}. 
First, 
by the variational formula we can immediately obtain the following control 
\begin{equation} \label{eq: var_formula}
    \int_{\R^d} f \Id \mu \le \frac{1}{\kappa N } \bigg( \mathcal{H}(\mu \vert \tilde \mu) + \log\bigg( \int_{\R^d} \exp\big( \kappa N f\big) \Id \tilde \mu \bigg)\bigg) 
\end{equation}
for two probability measures \(\mu,\tilde \mu \), \(\kappa > 0\) and a bounded measurable function \(f\). For a proof we refer to~\cite[Lemma 2]{JabinWang2018}

A key component in the analysis of Gaussian fluctuations is the exponential law of large numbers established by Jabin and Wang~\cite[Theorem 4]{JabinWang2018} and further improved in~\cite[Remark 2.2]{Xianliang2023}, which also holds in the whole space \(\R^d\).

\begin{lemma} \label{lemma: jabinwang_exponential}
    For any probability measure \(\bar{\rho}\) on \(\mathbb{R}^d\), and any \(\varphi(x, y) \in L^\infty(\mathbb{R}^{2d})\) such that 
    \[
    \gamma := \tilde C \|\varphi\|^2_{L^\infty} < 1,
    \]
    with some fix constant \(\tilde C\). Assume that \(\varphi\) satisfies the following cancellations:
    \[
    \int_{\mathbb{R}^d} \varphi(x, y) \bar{\rho}(x) \, dx = 0 \quad \forall y, \quad 
    \int_{\mathbb{R}^d} \varphi(x, y) \bar{\rho}(y) \, dy = 0 \quad \forall x.
    \]
    Then  
    \[
    \sup_{N > 2} \int_{\mathbb{R}^{dN}}  \bar{\rho}^{\otimes N} \exp \big(N | \langle \varphi, \mu_N \otimes \mu_N \rangle| \big) \, d x 
    \leq \frac{2}{1 - \gamma} < \infty,
    \]
    where \(\mu_N = \frac{1}{N} \sum_{i=1}^N \delta_{x_i}\), \(x := (x_1, \dots, x_N) \in \mathbb{R}^{dN}\).
\end{lemma}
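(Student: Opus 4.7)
The plan is to follow the Taylor expansion and combinatorial cancellation strategy originally devised by Jabin and Wang. First, I would remove the absolute value via the trivial bound
\[
\exp\big(N |\langle \varphi, \mu_N \otimes \mu_N \rangle|\big) \le \exp\big(N \langle \varphi, \mu_N \otimes \mu_N \rangle\big) + \exp\big(-N \langle \varphi, \mu_N \otimes \mu_N \rangle\big),
\]
and observe that the cancellation hypotheses on $\varphi$ are preserved under $\varphi \mapsto -\varphi$, so it suffices to estimate one of the two expectations (the factor $2$ in the final bound is accounted for in this step).

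Next, with $X_1,\dots,X_N$ i.i.d.\ with density $\bar\rho$ and $Y := \langle \varphi, \mu_N \otimes \mu_N\rangle = \tfrac{1}{N^2}\sum_{i,j}\varphi(X_i,X_j)$, I would Taylor expand
\[
\int_{\R^{dN}} \bar\rho^{\otimes N} \exp(NY)\, dx \;=\; \sum_{k\ge 0} \frac{N^k}{k!}\, \E[Y^k],
\]
and write
\[
\E[Y^k] \;=\; \frac{1}{N^{2k}} \sum_{(i_1,j_1,\dots,i_k,j_k)\in[N]^{2k}} \E\Big[\prod_{\ell=1}^k \varphi(X_{i_\ell},X_{j_\ell})\Big].
\]
The cancellation assumption then kills every multi-index in which some label $a\in\{1,\dots,N\}$ appears in exactly one position $i_\ell$ (or exactly one position $j_\ell$) and is not paired with any other occurrence: integrating out $X_a$ gives $\int \varphi(\cdot,y)\bar\rho\,dx = 0$ or $\int\varphi(x,\cdot)\bar\rho\,dy=0$. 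Thus the sum is restricted to multi-indices in which every label appears at least twice across the $2k$ slots.

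The core combinatorial step is to encode each surviving multi-index as a labelled multigraph on its $v$ distinct indices with $k$ edges corresponding to the factors $\varphi(X_{i_\ell},X_{j_\ell})$, every vertex having degree $\ge 2$ (so $v \le k$), and to bound the number of such graph shapes while keeping track of the factor $N^{-2k+v}$ produced by the index count. Exploiting the \emph{double} cancellation (in both arguments of $\varphi$), one decomposes the multigraph into matched pairs of half-edges: each matched pair carries a factor bounded by $\|\varphi\|_{L^\infty}^2$ together with a combinatorial weight collapsing to a constant $\tilde C$ independent of $N$. Summing over pair partitions and using $\|\varphi\otimes\varphi\|_{L^\infty} \le \|\varphi\|_{L^\infty}^2$ produces, after cancellation of the $k!$ from the Taylor expansion with the counting of pair-partitions, a bound of geometric type
\[
\sum_{k \ge 0}\frac{N^k}{k!}\E[Y^k] \;\le\; \sum_{m\ge 0} \big(\tilde C \|\varphi\|_{L^\infty}^2\big)^m \;=\; \frac{1}{1-\gamma},
\]
valid uniformly in $N > 2$, which combined with the two terms from the initial splitting gives the claimed $2/(1-\gamma)$.

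The main obstacle is the combinatorial accounting in the previous paragraph: one has to show rigorously that the $N$-dependence cancels exactly (so that the bound is uniform in $N$), and that the arithmetic factors from counting pair partitions, Stirling-type numbers, and matchings combine into the geometric series $\sum \gamma^m$ rather than a divergent tail. This is precisely where both cancellations (in $x$ and in $y$) and the quadratic hypothesis $\tilde C \|\varphi\|_{L^\infty}^2 < 1$ become essential; dropping either cancellation or settling for a linear bound in $\|\varphi\|_{L^\infty}$ breaks the pairing and the series diverges. The full technical execution of this combinatorial bookkeeping is carried out in the original work of Jabin and Wang, and the refinement to arbitrary $\bar\rho$ on $\R^d$ (as opposed to the torus) requires no additional geometric ingredient since the argument is purely measure-theoretic.
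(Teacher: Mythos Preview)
The paper does not supply its own proof of this lemma: it is quoted directly as the exponential law of large numbers from \cite[Theorem~4]{JabinWang2018}, with the sharpened constant from \cite[Remark~2.2]{Xianliang2023}, and the authors merely observe that the argument carries over verbatim to $\R^d$. Your proposal sketches exactly the Jabin--Wang strategy (splitting $e^{|\cdot|}\le e^{(\cdot)}+e^{-(\cdot)}$, Taylor expansion, the multigraph encoding with the double cancellation forcing minimum degree~$2$, and the geometric summation in $\gamma=\tilde C\|\varphi\|_{L^\infty}^2$), and you explicitly defer the hard combinatorial bookkeeping to that reference---which is precisely what the paper itself does. So there is nothing to compare: both the paper and your proposal point to the same external proof.
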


As in the seminal article~\cite{Xianliang2023}, we also require the modified version of the exponential law of large numbers~\cite[Lemma~2.3.]{Xianliang2023}. Again, the modification from the torus setting in~\cite{Xianliang2023} to the whole space \(\R^d\) presents no difficulties and, hence, we omit the proof at this step. 
\begin{lemma} \label{lemma: modification_elln}
Let \(\bar \rho\) be a probability measure on \(\R^d\). Assume further that functions \(\phi(x, y) \in L^\infty(\R^{d} \times \R^d)\) with \(\|\varphi\|_{L^\infty}\) small enough, and that 
\begin{equation*}
    \int_{\R^{d} \times \R^d} \bar \rho(x) \bar \rho(y) \phi(x, y) \, dx \, dy = 0.
\end{equation*}
Then 
\begin{equation*}
\int_{\R^{dN}} \bar \rho^{\otimes N} \exp \left( N \big| \langle \phi, \mu_N \otimes \mu_N \rangle \big|^2 \right) \, dx  
\leq 1 + \frac{\alpha_0}{1 - \alpha_0} + \frac{\beta_0}{1 - \beta_0},  
\end{equation*}
where 
\begin{equation*}
\alpha_0 := e^9 \|\phi\|_{L^\infty}^2 < 1, \quad \beta_0 := 4e \|\phi\|_{L^\infty}^2 < 1.
\end{equation*}
\end{lemma}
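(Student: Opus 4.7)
The plan is to follow the template of the analogous torus result in~\cite[Lemma~2.3]{Xianliang2023}, which is itself a two-scale refinement of the Jabin--Wang estimate of Lemma~\ref{lemma: jabinwang_exponential}. The passage from the torus to $\R^d$ is essentially cosmetic, since neither Lemma~\ref{lemma: jabinwang_exponential} nor the variational formula~\eqref{eq: var_formula} make use of any integrability property of the reference measure $\bar\rho$ beyond being a probability measure; only boundedness of $\phi$ and the mean-zero condition enter in an essential way.

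The first step is a Hoeffding-type splitting of $\phi$ into a doubly-centred U-statistic kernel plus two one-variable marginals. Setting
$$\phi_1(x) := \int_{\R^d} \phi(x,y)\,\bar\rho(y)\,dy, \quad \phi_2(y) := \int_{\R^d} \phi(x,y)\,\bar\rho(x)\,dx, \quad \tilde\phi(x,y) := \phi(x,y) - \phi_1(x) - \phi_2(y),$$
the single cancellation $\iint \phi\,\bar\rho\otimes\bar\rho = 0$ translates into $\int\phi_1\,\bar\rho = \int\phi_2\,\bar\rho = 0$, which forces $\tilde\phi$ to satisfy both marginal cancellations required by Lemma~\ref{lemma: jabinwang_exponential}; moreover $\|\tilde\phi\|_{L^\infty}\le 3\|\phi\|_{L^\infty}$ and $\|\phi_1+\phi_2\|_{L^\infty}\le 2\|\phi\|_{L^\infty}$. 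One has the algebraic identity
$$\langle \phi, \mu_N\otimes\mu_N\rangle = \langle \tilde\phi, \mu_N\otimes\mu_N\rangle + \tfrac{1}{N}\sum_{i=1}^N (\phi_1+\phi_2)(x_i) =: A_N + B_N.$$

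The second step is to use $|A_N+B_N|^2\le 4\max(A_N^2,B_N^2)$, together with the elementary observation that $\max(s,t)\le s+t-1$ for $s,t\ge 1$, applied to $s=\exp(4NA_N^2)$ and $t=\exp(4NB_N^2)$, to derive the pointwise bound
$$\exp\bigl(N|\langle \phi, \mu_N\otimes\mu_N\rangle|^2\bigr) \le \exp(4NA_N^2) + \exp(4NB_N^2) - 1.$$
After integrating against $\bar\rho^{\otimes N}$, the job reduces to the two separate estimates
$$\int \bar\rho^{\otimes N}\exp(4NA_N^2)\,dx \le 1 + \frac{\alpha_0}{1-\alpha_0}, \qquad \int \bar\rho^{\otimes N}\exp(4NB_N^2)\,dx \le 1 + \frac{\beta_0}{1-\beta_0}.$$
The first is a quadratic strengthening of Lemma~\ref{lemma: jabinwang_exponential} for the doubly-centred kernel $\tilde\phi$: one Taylor-expands and uses the combinatorial moment bound built into the proof of Lemma~\ref{lemma: jabinwang_exponential}, where double centring kills every index pattern containing a free variable and leaves only paired patterns whose count resums to a geometric series of ratio $\alpha_0 = e^9\|\phi\|_{L^\infty}^2$ after Stirling. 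The second is a Hoeffding estimate: since $\sqrt N\,B_N = \tfrac{1}{\sqrt N}\sum_i (\phi_1+\phi_2)(x_i)$ is a sum of i.i.d.\ bounded centred variables, it is sub-Gaussian with parameter $2\|\phi\|_{L^\infty}$, and the square-exponential moment of a sub-Gaussian variable expands to the geometric series of ratio $\beta_0 = 4e\|\phi\|_{L^\infty}^2$.

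The main obstacle is not the strategy, which is standard, but the bookkeeping of constants. The specific ratio $\alpha_0 = e^9\|\phi\|_{L^\infty}^2$ absorbs both the factor $9 = 3^2$ that enters from $\|\tilde\phi\|_{L^\infty}\le 3\|\phi\|_{L^\infty}$ and the factor $e$ that enters from Stirling, so matching it requires unfolding Lemma~\ref{lemma: jabinwang_exponential} in its combinatorial form rather than invoking it as a black box. In particular, one must work directly with the moment recursion underlying the Jabin--Wang argument so that the $k$-th moment of $A_N$ acquires only $(C k/N)^{k}$ growth rather than the naive $k!/N^{k}$ from a subexponential tail bound; this sharper moment rate is what makes the quadratic exponential $\exp(c N A_N^2)$ summable uniformly in $N$.
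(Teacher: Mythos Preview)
The paper omits the proof of this lemma entirely, referring the reader to \cite[Lemma~2.3]{Xianliang2023} and noting that the passage from $\mathbb{T}^d$ to $\R^d$ is cosmetic. The argument intended there, which the paper does reproduce in the Appendix for the exponent-$4$ analogue (Lemma~\ref{lemma: modification_super}), is a \emph{single-pass} combinatorial estimate: Taylor-expand $\exp(N|\langle\phi,\mu_N\otimes\mu_N\rangle|^2)$, and for the $m$-th term split according to whether $4m>N$ (trivial bound via Stirling, producing $\beta_0=4e\|\phi\|_{L^\infty}^2$) or $4m\le N$ (count non-vanishing multi-indices using only the single mean-zero condition, producing $\alpha_0=e^9\|\phi\|_{L^\infty}^2$). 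No Hoeffding decomposition enters.

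Your route is genuinely different: you first centre $\phi$ to obtain a doubly-cancelling kernel $\tilde\phi$ plus two marginals $\phi_1,\phi_2$, then treat the two pieces by separate mechanisms (a quadratic Jabin--Wang bound for $A_N$, a sub-Gaussian bound for $B_N$). This is sound and arguably cleaner---double cancellation makes the combinatorics for $A_N$ simpler, and $B_N$ is a textbook i.i.d.\ sum---but it does not reproduce the stated constants. In particular, your attribution of the specific value $e^9$ to ``$9=3^2$ from $\|\tilde\phi\|_{L^\infty}\le 3\|\phi\|_{L^\infty}$ times $e$ from Stirling'' is wrong: in the direct argument both $e^9$ and $4e$ arise from the $4m\le N$ versus $4m>N$ dichotomy, not from any doubly-centred kernel. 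Likewise, the sub-Gaussian computation for $B_N$ (with the factor $4$ from $|A_N+B_N|^2\le 4\max$ and $\|\phi_1+\phi_2\|_{L^\infty}\le 2\|\phi\|_{L^\infty}$) yields a ratio of order $32\|\phi\|_{L^\infty}^2$, not $4e\|\phi\|_{L^\infty}^2$. Since all downstream applications only need a bound of the form $1+C_1\|\phi\|_{L^\infty}^2/(1-C_1\|\phi\|_{L^\infty}^2)+C_2\|\phi\|_{L^\infty}^2/(1-C_2\|\phi\|_{L^\infty}^2)$ with universal $C_1,C_2$, your approach is adequate for the paper's purposes; but to recover the constants exactly as stated you would have to revert to the direct combinatorial argument of \cite{Xianliang2023}.
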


In order to use Kolmogorov's tightness criterion~\cite[Theorem~23.7]{Kallenberg2021} we require a small improvement of Lemma~\ref{lemma: modification_elln}, which will be proved in the Appendix. 

\begin{lemma} \label{lemma: modification_super}
Let \(\bar \rho\) be a probability measure on \(\R^d\). Assume further that the function \(\phi(x, y) \in L^\infty(\R^{d} \times \R^d)\) with \(\|\phi\|_{L^\infty}\) small enough, and that 
\begin{equation}
    \int_{\R^{d} \times \R^d} \bar \rho(y) \bar \rho(z) \phi(y, z) \Id y  \Id z= 0.
\end{equation}
Then 
\begin{equation*}
\int_{\R^{dN}} \bar \rho^{\otimes N} \exp \left( N \big| \langle \phi, \mu_N \otimes \mu_N \rangle \big|^{4} \right) \, \Id x 
\leq  C   . 
\end{equation*}

\end{lemma}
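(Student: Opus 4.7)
\medskip

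\noindent\textbf{Proof proposal.} The plan is to reduce the fourth-power statement directly to the second-power statement of Lemma~\ref{lemma: modification_elln} by exploiting the trivial pointwise bound on $\langle \phi, \mu_N \otimes \mu_N \rangle$. Since $\mu_N \otimes \mu_N$ is a probability measure on $\R^{2d}$, we have $|\langle \phi, \mu_N \otimes \mu_N\rangle| \le \|\phi\|_{L^\infty}$ surely on $\R^{dN}$. This lets me convert two of the four powers into a deterministic constant:
\begin{equation*}
\bigl|\langle \phi, \mu_N \otimes \mu_N\rangle\bigr|^{4} \;\le\; \|\phi\|_{L^\infty}^{2}\,\bigl|\langle \phi, \mu_N \otimes \mu_N\rangle\bigr|^{2}.
\end{equation*}

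Now define the rescaled function $\psi := \|\phi\|_{L^\infty}\,\phi$. Three properties are immediate. First, $\|\psi\|_{L^\infty}=\|\phi\|_{L^\infty}^{2}$, which is still small (indeed, even smaller than $\|\phi\|_{L^\infty}$ under the smallness hypothesis, so the smallness requirement of Lemma~\ref{lemma: modification_elln} is satisfied automatically). Second, the centering $\int_{\R^{2d}} \psi(y,z)\bar\rho(y)\bar\rho(z)\,\Id y\,\Id z = 0$ is preserved under scalar multiplication. Third, by bilinearity $|\langle \psi, \mu_N \otimes \mu_N\rangle|^{2} = \|\phi\|_{L^\infty}^{2}\,|\langle \phi, \mu_N \otimes \mu_N\rangle|^{2}$, so that
\begin{equation*}
N \bigl|\langle \phi, \mu_N \otimes \mu_N\rangle\bigr|^{4} \;\le\; N\bigl|\langle \psi, \mu_N \otimes \mu_N\rangle\bigr|^{2}.
\end{equation*}

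Applying Lemma~\ref{lemma: modification_elln} to the test function $\psi$ in place of $\phi$, together with monotonicity of $\exp$, I obtain
\begin{equation*}
\int_{\R^{dN}} \bar\rho^{\otimes N} \exp\!\left(N \bigl|\langle \phi, \mu_N \otimes \mu_N\rangle\bigr|^{4}\right) \Id x
\;\le\; \int_{\R^{dN}} \bar\rho^{\otimes N} \exp\!\left(N\bigl|\langle \psi, \mu_N \otimes \mu_N\rangle\bigr|^{2}\right) \Id x \;\le\; C,
\end{equation*}
which is the claim, with a constant $C$ depending only on $\|\phi\|_{L^\infty}$.

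There is essentially no genuine obstacle here: the sole content of the lemma is the observation that the $L^\infty$-boundedness of $\phi$ trades higher powers of the correlation $\langle\phi,\mu_N\otimes\mu_N\rangle$ against a smaller $L^\infty$-norm of a rescaled test function, which is exactly what the already-established concentration estimate tolerates. The upgrade from exponent $2$ to exponent $4$ is what will be needed later when invoking Kolmogorov's continuity criterion (which requires moments of order strictly greater than $2$), so this clean reduction is sufficient for the applications in the sequel.
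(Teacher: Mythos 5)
Your reduction is correct, and it takes a genuinely different route from the paper. The paper proves Lemma~\ref{lemma: modification_super} from scratch by repeating the combinatorial argument behind Lemma~\ref{lemma: modification_elln}: Taylor-expanding the exponential, counting the multi-indices for which the cancellation property fails to kill the integral, and controlling the two regimes $8m>N$ and $8m\le N$ with Stirling's formula, ending with a constant governed by $\alpha_0=e^{14}\|\phi\|_{L^\infty}^4$ and $\beta_0=e\,8\,\|\phi\|_{L^\infty}^4$. You instead observe that, since $\mu_N\otimes\mu_N$ is a probability measure, $|\langle\phi,\mu_N\otimes\mu_N\rangle|\le\|\phi\|_{L^\infty}$ pointwise, so two of the four powers can be absorbed into the $L^\infty$-norm of the rescaled function $\psi=\|\phi\|_{L^\infty}\phi$, which still satisfies the centering hypothesis and whose $L^\infty$-norm $\|\phi\|_{L^\infty}^2$ meets the smallness requirement of Lemma~\ref{lemma: modification_elln}; monotonicity of $\exp$ then finishes the proof with $\alpha_0=e^{9}\|\phi\|_{L^\infty}^4$, $\beta_0=4e\|\phi\|_{L^\infty}^4$. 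This is shorter, avoids redoing the counting, and generalizes immediately to any exponent $2q\ge 2$ via $|\cdot|^{2q}\le\|\phi\|_{L^\infty}^{2q-2}|\cdot|^2$; the constants it produces are of the same quality as the paper's. The only thing the direct combinatorial proof ``buys'' is independence from Lemma~\ref{lemma: modification_elln} (whose proof the paper only cites), but since that lemma is stated and used elsewhere in the paper anyway, your reduction is a legitimate and arguably preferable argument. One minor point worth making explicit: as everywhere in this circle of estimates, one must fix a bounded representative of $\phi\in L^\infty$ so that the pointwise evaluation $\langle\phi,\mu_N\otimes\mu_N\rangle=N^{-2}\sum_{i,j}\phi(x_i,x_j)$ and the bound by $\|\phi\|_{L^\infty}$ make sense; this is the same convention the paper already uses in Lemmas~\ref{lemma: modification_elln} and~\ref{lemma: uniform_estimate}, so it is not a gap.
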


\section{Uniform estimates} \label{sec: uniform_estimates}
Following~\cite{Fernandez1997,Xianliang2023} we derive uniform estimates on the sequence of fluctuation process \((\eta^N, N \in\N)\) in appropriate negative Bessel potential spaces. By the nature of the Fourier transform on the torus as a infinite series which is given by the Fourier coefficients, we cannot directly apply the estimates derived in~\cite{Xianliang2023}. Hence, based on Parseval's identity we reproduce these estimates in the unbounded setting with common noise. First let us provide an adaptation of~\cite[Lemma~2.6]{Xianliang2023}.

\begin{lemma} \label{lemma: uniform_estimate}
Let \(m \in \{1,2\}\). 
For each \(\alpha > d/2\), we have the following inequality 
    \begin{equation}
         \sup\limits_{0 \le t \le T} \E\bigg( \norm{\mu_t^N - \rho_t }^{2m}_{H^{-\alpha}} \bigg)   \le  \frac{C(\alpha, m) }{N} \bigg(  \sup\limits_{0 \le t \le T}\E\bigg( \mathcal{H}(\rho_t^N \vert \rho_t^{\otimes N} \bigg)  + 1 \bigg) ,
    \end{equation}
    where the constant \(C(\alpha,m)\) depends on \(\alpha\) and \(m\).  
\end{lemma}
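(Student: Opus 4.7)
Since $\alpha > d/2$, the symbol $(1+|\xi|^{2})^{-\alpha}$ belongs to $L^{1}(\R^{d})$ and hence the Bessel kernel $K_{\alpha}(z) := \cF^{-1}\!\big[(1+|\xi|^{2})^{-\alpha}\big](z)$ is bounded and continuous, with $\norm{K_{\alpha}}_{L^{\infty}(\R^{d})} \le C(\alpha)$. By Plancherel, for any signed measure $u$ on $\R^{d}$ for which both sides make sense, one has (up to a harmless Fourier-normalisation constant absorbed into $C(\alpha)$)
\begin{equation*}
\norm{u}_{H^{-\alpha}(\R^{d})}^{2} \;=\; C \int_{\R^{d}}\!\!\int_{\R^{d}} K_{\alpha}(x-y)\, u(dx)\, u(dy).
\end{equation*}
Applying this with $u = \mu_{t}^{N} - \rho_{t}$ and using that both $\mu_{t}^{N}$ and $\rho_{t}$ are probability measures motivates the symmetric centred kernel
\begin{equation*}
\phi_{t}(x,y) \;:=\; K_{\alpha}(x-y) - (K_{\alpha}\!*\!\rho_{t})(x) - (K_{\alpha}\!*\!\rho_{t})(y) + \int\!\!\int K_{\alpha}(z-z')\,\rho_{t}(dz)\,\rho_{t}(dz'),
\end{equation*}
for which a direct expansion yields $\norm{\mu_{t}^{N} - \rho_{t}}_{H^{-\alpha}}^{2} = C\,\langle \phi_{t},\mu_{t}^{N}\!\otimes\!\mu_{t}^{N}\rangle$, and for which the double cancellations $\int \phi_{t}(x,y)\rho_{t}(dy) = \int \phi_{t}(x,y)\rho_{t}(dx) = 0$ hold pointwise in $\omega$, guaranteeing in particular $\iint \phi_{t}\,\rho_{t}\otimes\rho_{t}=0$. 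Moreover $\norm{\phi_{t}}_{L^{\infty}(\R^{2d})} \le 4\,\norm{K_{\alpha}}_{L^{\infty}(\R^{d})}$ uniformly in $\omega$.

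Squaring the above kernel representation gives $\norm{\mu_{t}^{N} - \rho_{t}}_{H^{-\alpha}}^{4} = C\,|\langle \phi_{t},\mu_{t}^{N}\!\otimes\!\mu_{t}^{N}\rangle|^{2}$. I then condition on $\cF^{W}$: the random kernel $\phi_{t}$ is $\cF^{W}$-measurable since $\rho_{t}$ is, while the conditional law of $(X_{t}^{1},\dots,X_{t}^{N})$ given $\cF^{W}$ admits density $\rho_{t}^{N}$ on $\R^{dN}$. Applying the variational bound~\eqref{eq: var_formula} pathwise in $\omega$ to the non-negative bounded functional $f(x) := |\langle \phi_{t}, \mu_{N}\!\otimes\!\mu_{N}\rangle|^{2}$ on $\R^{dN}$, with reference measure $\rho_{t}^{\otimes N}$ and parameter $\kappa > 0$, yields $\P$-a.s.
\begin{equation*}
\E\!\left[\norm{\mu_{t}^{N}-\rho_{t}}_{H^{-\alpha}}^{4}\,\Big\vert\,\cF^{W}\right] \;\le\; \frac{C}{\kappa N}\bigg(\mathcal{H}(\rho_{t}^{N}\,\vert\,\rho_{t}^{\otimes N}) + \log\int_{\R^{dN}}\!\exp\!\big(\kappa N f\big)\,d\rho_{t}^{\otimes N}\bigg).
\end{equation*}

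It remains to control the exponential moment by a universal constant. I fix $\kappa = \kappa(\alpha) > 0$ small enough so that the rescaled kernel $\sqrt{\kappa(\alpha)}\,\phi_{t}$ satisfies $\|\sqrt{\kappa(\alpha)}\,\phi_{t}\|_{L^{\infty}} \le 4\sqrt{\kappa(\alpha)}\,\norm{K_{\alpha}}_{L^{\infty}}$ below the smallness threshold of Lemma~\ref{lemma: modification_elln}. That lemma, applied pathwise in $\omega$ with reference probability measure $\bar\rho = \rho_{t}$ and the rescaled kernel, gives $\int \exp(\kappa(\alpha) N f)\, d\rho_{t}^{\otimes N} \le C$ uniformly in $N$ and $\omega$. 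Plugging this back in, taking $\P$-expectation and then the supremum over $t \in [0,T]$ delivers the announced inequality with constant $C(\alpha) := C/\kappa(\alpha)$. The only genuinely subtle point is the common-noise dependence: it makes $\rho_{t}$, and hence the kernel $\phi_{t}$, random, which forces us to apply Lemma~\ref{lemma: modification_elln} \emph{pathwise} in $\omega$ after conditioning on $\cF^{W}$; once this conditioning is in place, the remaining steps reduce to verifying the two cancellation identities for $\phi_{t}$, the uniform $L^{\infty}$ bound, and the algebraic use of the variational formula.
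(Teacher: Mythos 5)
Your proposal is correct, and it reaches the estimate by a genuinely different decomposition than the paper. The paper stays entirely in Fourier variables: it writes $\norm{\mu_t^N-\rho_t}_{H^{-\alpha}}^4$ via Parseval as the square of an integral of $|\cF(\mu_t^N-\rho_t)(\xi)|^2$ against the weight $(1+|\xi|^2)^{-\alpha}$, applies Jensen's inequality with respect to the normalised weight to pull the $\xi$-integral out of the exponential, and then, for each fixed frequency $\xi$, applies Lemma~\ref{lemma: modification_elln} to the complex, $\xi$-dependent kernel $\phi(t,\xi,z,\tilde z)$ built from centred exponentials (taking a supremum over $\xi$ at the end). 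You instead work in physical space: since $\alpha>d/2$ the symbol $(1+|\xi|^2)^{-\alpha}$ is integrable, so the Bessel-type kernel $K_\alpha=\cF^{-1}[(1+|\xi|^2)^{-\alpha}]$ is bounded, real and symmetric, and the exact identity $\norm{\mu_t^N-\rho_t}_{H^{-\alpha}}^2=C\langle\phi_t,\mu_t^N\otimes\mu_t^N\rangle$ holds for your single centred kernel $\phi_t$ (your expansion and both single cancellations check out, using that $\mu_t^N$ and $\rho_t$ are probability measures and that $K_\alpha$ is even). This buys you one clean application of Lemma~\ref{lemma: modification_elln} with a frequency-independent, real-valued kernel, and dispenses with the Jensen step, the supremum over $\xi$, and the real/imaginary-part bookkeeping that the paper's argument carries; the price is only the (standard) Fubini justification of the kernel representation for signed measures of finite total variation. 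Both arguments rest on the same analytic fact, namely $\norm{K_\alpha}_{L^\infty}\le C\norm{(1+|\cdot|^2)^{-\alpha}}_{L^1}<\infty$ for $\alpha>d/2$, and your treatment of the common noise — conditioning on $\cF^W$ so that the reference measure is $\rho_t^{\otimes N}$ and applying the deterministic exponential law of large numbers pathwise in $\omega$ — is exactly what the paper does implicitly.
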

\begin{remark}
    We observe that the condition \(\alpha > d/2\) is necessary to ensure the integrability of the Bessel potential in the \(L^1(\R^d)\)-norm. 
\end{remark}
\begin{proof}
Utilizing the variational formula~\eqref{eq: var_formula} we find
\begin{align*}
     &\E\bigg( \norm{\mu_t^N - \rho_t }^{2m}_{H^{-\alpha}} \bigg)  \\
     &\quad =\E \bigg( \E \bigg(  \norm{\mu_t^N - \rho_t }^{2m}_{H^{-\alpha}} \vert \cF_t^W \bigg)  \bigg)  \\
     &\quad \le \frac{1}{\kappa N} \bigg( \E\big( \mathcal{H}(\rho_t^N \vert \rho_t^{\otimes N} \big) +  \E\bigg( \sup\limits_{0 \le t \le T} \log\bigg( \int_{\R^{dN} } \rho^{\otimes N}_s(x) \exp\bigg( \kappa N  \norm{\mu_t^N - \rho_t }^{2m}_{H^{-\alpha}}  \bigg) \Id x \bigg) \bigg) .
\end{align*}
    Now, by the crucial fact that the empirical measure is in \(H^{-\alpha}(\R^d)\) for \(\alpha > d/2\), we use Parseval's identity to find 
\begin{align*}
      \norm{\mu_t^N -\rho_t }_{H^{-\alpha}}^{2m} &=
      \norm{\mathcal{F}^{-1}( (1+|\xi|^2)^{-\alpha/2} \mathcal{F}(\mu_t^N -\rho_t)(\xi))}_{L^2(\R^d)}^{2m} \\
     &=  \norm{ (1+|\cdot |^2)^{-\alpha/2} \mathcal{F}(\mu_t^N -\rho_t)}_{L^2(\R^d)}^{2m}.
\end{align*}
This implies, 
\begin{align*}
    &\int_{\R^{dN}} \rho^{\otimes N}_s(x) \exp\bigg( \kappa N \norm{\mathcal{F}^{-1}( (1+|\xi|^2)^{-\alpha/2} \mathcal{F}(\mu_t^N -\rho_t)(\xi))}_{L^2}^{2m} \bigg) \Id x  \\
    &\quad =  \int_{\R^{dN}} \rho^{\otimes N}_s(x) \exp\bigg(  \kappa N 
 \norm{ (1+|\cdot|^2)^{-\alpha/2} \mathcal{F}(\mu_t^N -\rho_t)}_{L^2}^{2m} \bigg) \Id x \\
    &\quad =  \int_{\R^{dN}} \rho^{\otimes N}_s(x) \exp\bigg(  \kappa N 
 \bigg( \int_{\R^d} | \mathcal{F}(\mu_t^N -\rho_t)(\xi)|^2 (1+|\xi|^2)^{-\alpha}  \Id  \xi   \bigg)^m \bigg) \Id x \\
&\quad \le  \int_{\R^{dN}} \int_{\R^d} \exp\bigg(  \kappa N  \frac{ |\mathcal{F}(\mu_t^N -\rho_t)(\xi)|^{2m} }{ \norm{(1+|\cdot|^2)^{-\alpha}}_{L^1(\R^d)}^{-m}} \bigg)  \frac{(1+|\xi|)^{-\alpha}}{ \norm{(1+|\cdot|^2)^{-\alpha}}_{L^1(\R^d)}} \Id \xi  \;  \rho^{\otimes N}_s(x) \Id x \\ 
&\quad  \le  \sup\limits_{\xi \in  \R^d}
\int_{\R^{dN}}  \exp\Big(  \kappa N  \norm{(1+|\cdot|^2)^{-\alpha}}_{L^1(\R^d)}^{ m} |\mathcal{F}(\mu_t^N -\rho_t)(\xi)|^{2m}  \Big) \rho^{\otimes N}_s(x) \Id x ,  
\end{align*}
where we used Jensen's inequality. 
Next, we define the functions used in the exponential law of large numbers (Lemma~\ref{lemma: modification_elln}). Let \(\phi\) be given by 
\begin{align*}
    &\phi(t,\xi,z,\tilde{z}) \\
    &\quad := \bigg(\exp(-i z \cdot \xi)- \int_{\R^d} \exp(-i y \cdot \xi) \rho_t(y) \Id y \bigg)  \bigg(\exp(-i \tilde{z} \cdot \xi)- \int_{\R^d} \exp(-i y \cdot \xi) \rho_t(y) \Id y \bigg) .  
\end{align*}
Then, we have the following identity
\begin{equation}
     |\mathcal{F}(\mu_t^N -\rho_t)(\xi)|^{2m} 
     = |\langle \phi(t,\xi,\cdot ,\cdot ), \mu_t^N \otimes \mu_t^N \rangle|^m
\end{equation}
and since \(\rho_t\) is a provability measure, we have the trivial bound 
\begin{equation*}
    |\phi(t,\xi,z,\tilde{z})| \le  4 . 
\end{equation*}
It remains to check the cancellation property of Lemma~\ref{lemma: modification_elln}
\begin{align*}
    &\int_{\R^{2d}} \phi(t,\xi,z,\tilde{z}) \rho_t(z) \rho_t(\tilde z) \Id z  \Id \tilde z \\
    &\quad = \bigg( \int_{\R^d} \bigg(\exp(-i z \cdot \xi)- \int_{\R^d} \exp(-i y \cdot \xi) \rho_t(y) \Id y \bigg) \rho_t(z) \Id z \bigg)^2 =   0 ,  \quad \P\text{-a.e.}
\end{align*}
 Choosing \(\kappa = \big(8 \sqrt{e^9} \norm{(1+|\cdot|^2)^{-\alpha}}_{L^1(\R^d)}^m \big)^{-1} \) we obtain \[\max(e^9,4e) \big|\kappa  \norm{(1+|\cdot|^2)^{-\alpha}}_{L^1(\R^d)}^m \; \phi(t,\xi, z, \tilde z ) \big|^2 < 1.  \] Now, we can finally apply Lemma~\ref{lemma: jabinwang_exponential} in the case \(m=1\) and Lemma~\ref{lemma: modification_elln} in the case \(m=2\) to the function \(\kappa  \norm{(1+|\cdot|^2)^{-\alpha}}_{L^1(\R^d)}^m \phi \) to obtain 
\begin{align*}
     &\int_{\R^{dN}} \rho^{\otimes N}_s(x) \exp\bigg( \kappa N \norm{\mathcal{F}^{-1}( (1+|\xi|^2)^{-\alpha/2} \mathcal{F}(\mu_t^N -\rho_t)(\xi))}_{L^2}^4 \bigg) \Id x  \\
     &\quad \le \sup\limits_{N \ge 2 } \sup\limits_{\xi \in  \R^d}
\int_{\R^{dN}}  \exp\Big(  \kappa N  \norm{(1+|\cdot|^2)^{-\alpha}}_{L^1(\R^d)}^m |\langle \phi(t,\xi,\cdot ,\cdot ), \mu_t^N \otimes \mu_t^N \rangle|^2  \Big) \rho^{\otimes N}_s(x) \Id x  \\
&\quad \le  C 
\end{align*}
for some finite constant \(C\). 
Therefore, we find 
\begin{align*}
     \E\bigg( \norm{\mu_t^N - \rho_t }^{2m}_{H^{-\alpha}(\R^d)} \bigg) 
     \le \frac{1}{\kappa N} \bigg( \E(\mathcal{H}(\rho_t^N \vert \rho_t^{\otimes N} ) + C \bigg),
\end{align*}
which proves the claim. 
\end{proof}

We consider the interaction part in the SPDE formulation~\eqref{eq: etaN_spde}. Notice, that we explicitly need to identify the distribution of the Fourier transform. In order to estimate the term we follow~\cite[Lemma~2.8]{Xianliang2023}.

\begin{lemma} \label{lemma: uniform_interaction}
    Let \(\alpha > d/2+2\), then the inequality 
    \begin{align*}
        & \sup\limits_{0 \le t \le T}\E \bigg( \norm{\nabla \cdot ( k*\mu_t^N \mu_t^N -k *\rho_t \rho_t )}^4_{H^{-\alpha}(\R^d)} \bigg) \\
        &\quad \le \frac{C \big(\norm{k}_{L^\infty(\R^d)},\alpha \big)}{ N} \bigg(   \sup\limits_{0 \le t \le T}\E\bigg( \mathcal{H}(\rho_t^N \vert \rho_t^{\otimes N} \bigg) +1 \bigg)
    \end{align*}
    holds true. 
\end{lemma}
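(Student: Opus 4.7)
The plan is to mirror the strategy of Lemma~\ref{lemma: uniform_estimate}, replacing the linear quantity $\mu_t^N - \rho_t$ by the bilinear interaction $F_t := k*\mu_t^N \mu_t^N - k*\rho_t \rho_t$, and invoking the quartic exponential law of large numbers (Lemma~\ref{lemma: modification_super}) in place of the quadratic one. First I would apply the variational inequality~\eqref{eq: var_formula} conditionally on $\cF_t^W$, obtaining
\[
\E \norm{\nabla \cdot F_t}_{H^{-\alpha}}^4 \le \frac{1}{\kappa N}\E \mathcal H(\rho_t^N | \rho_t^{\otimes N}) + \frac{1}{\kappa N} \E \log \int_{\R^{dN}} \rho_t^{\otimes N}(x) \exp\big(\kappa N \norm{\nabla \cdot F_t}_{H^{-\alpha}}^4\big) dx.
\]
The first summand is controlled by Assumption~\ref{ass: entropy}; only the exponential moment remains to be bounded uniformly in $N$ and $t$.

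Next, I would exploit Parseval's identity together with the Fourier representation $\mathcal{F}(\nabla \cdot G)(\xi) = i\xi \cdot \hat G(\xi)$ to obtain
\[
\norm{\nabla \cdot F_t}_{H^{-\alpha}}^2 \le \int_{\R^d} (1+|\xi|^2)^{-\alpha} |\xi|^2 |\hat F_t(\xi)|^2 d\xi.
\]
Under the assumption $\alpha > d/2+2$, the weight $w(\xi) := (1+|\xi|^2)^{-\alpha}|\xi|^2 \le (1+|\xi|^2)^{-\alpha+1}$ is integrable on $\R^d$; this is where the extra regularity relative to Lemma~\ref{lemma: uniform_estimate} enters, since the divergence costs one derivative. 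A double application of Jensen's inequality, exactly as in the proof of Lemma~\ref{lemma: uniform_estimate}, then reduces matters to establishing the uniform estimate
\[
\sup_{\xi \in \R^d} \int_{\R^{dN}} \rho_t^{\otimes N}(x) \exp\big(\kappa N \|w\|_{L^1}^2 |\hat F_t(\xi)|^4\big) dx \le C.
\]

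The key step is then to rewrite $\hat F_t(\xi)$ as a bilinear form in $\mu_t^N \otimes \mu_t^N$ with the single cancellation required by Lemma~\ref{lemma: modification_super}. Setting $\Psi_\xi^\ell(z, z') := k_\ell(z - z') e^{-i\xi \cdot z}$, a direct computation gives, component-wise,
\[
(2\pi)^{d/2}\hat F_t^\ell(\xi) = \langle \Psi_\xi^\ell, \mu_t^N \otimes \mu_t^N \rangle - \langle \Psi_\xi^\ell, \rho_t \otimes \rho_t \rangle = \langle \phi_\xi^\ell, \mu_t^N \otimes \mu_t^N \rangle,
\]
where $\phi_\xi^\ell(z, z') := \Psi_\xi^\ell(z, z') - \int \Psi_\xi^\ell(y, y')\rho_t(y)\rho_t(y')\, dy\, dy'$. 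By construction $\int \phi_\xi^\ell \rho_t \otimes \rho_t = 0$, and since the oscillatory factor $e^{-i\xi \cdot z}$ has modulus one,
\[
\|\phi_\xi^\ell\|_{L^\infty(\R^{2d})} \le 2 \|k\|_{L^\infty} \qquad \text{uniformly in } \xi.
\]
Splitting $\phi_\xi^\ell$ into real and imaginary parts to remain within the real-valued framework of Lemma~\ref{lemma: modification_super}, and choosing $\kappa$ sufficiently small so that $\kappa \|w\|_{L^1}^2 (2\pi)^{-2d} \|k\|_{L^\infty}^4$ meets the smallness threshold of that lemma, Lemma~\ref{lemma: modification_super} delivers a bound on $\int \rho_t^{\otimes N} \exp(\kappa N \|w\|_{L^1}^2 |\hat F_t(\xi)|^4)\, dx$ that is finite uniformly in $\xi$, $t$ and $N$, closing the argument.

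The main obstacle is the interplay between the divergence (forcing the extra $|\xi|^2$ weight, hence the stricter assumption $\alpha > d/2+2$) and the quadratic interaction structure, which prevents a direct reuse of Lemma~\ref{lemma: uniform_estimate} and mandates the quartic form of the exponential LLN. The conceptual insight is that despite the $\xi$-dependence of $\Psi_\xi^\ell$ through an oscillatory factor, the modulus of $\phi_\xi^\ell$ is controlled by $\|k\|_{L^\infty}$ uniformly in $\xi$; without this observation the smallness hypothesis of Lemma~\ref{lemma: modification_super} could not be enforced uniformly in $\xi$, and the sup inside the exponential would be unbounded.
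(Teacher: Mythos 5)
Your proposal is correct and follows essentially the same route as the paper's proof: variational entropy inequality, Parseval with the divergence contributing a $|\xi|^2$ factor absorbed into the integrable weight (hence $\alpha>d/2+2$), a double Jensen reduction to a $\sup_\xi$ bound, centering of the bilinear kernel $e^{-i\xi\cdot z}k(z-\tilde z)$ so that only the single cancellation of Lemma~\ref{lemma: modification_super} is needed, the uniform-in-$\xi$ bound $\|\phi_\xi\|_{L^\infty}\le 2\|k\|_{L^\infty}$, and the real/imaginary split before invoking the quartic exponential law of large numbers. Your explicit centering by the constant $\int\Psi_\xi^\ell\,\rho_t\otimes\rho_t$ is in fact the intended (and correct) form of the function $\phi$ defined in the paper's proof.
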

First, let us make sure the multiplication of \(k*\mu_t^N\) with \(\mu_t^N\) is well-defined as a Schwartz distribution. For \(\varphi \in \mathcal{S}(\R^d)\) we define 
\begin{equation*}
    \langle \nabla \cdot k*\mu_t^N \mu_t^N, \varphi \rangle =  - \langle \mu_t^N, k*\mu_t^N \cdot \nabla  \varphi \rangle,
\end{equation*}
where the right hand side bracket is to be understood as integration of \(k*\mu_t^N  \cdot \nabla \varphi \) with respect to \(\mu_t^N\). Here, we emphasize the important fact that \(\mu_t^N\) is a measure and, therefore, the right hand side is well-defined. Additionally, we can not replace the integration measure \(\mu_t^N\) by a general Schwartz distribution, since the product \(k*\mu_t^N \cdot \nabla \varphi \) is not smooth and therefore not a Schwartz distribution.

\begin{proof}
    Similar to Lemma~\ref{lemma: uniform_estimate} we start with inequality 
    \begin{align*}
     &\E\bigg( \norm{\nabla \cdot ( k*\mu_t^N \mu_t^N -k *\rho_t \rho_t )}^4_{H^{-\alpha}(\R^d)} \bigg)  \\
     &\quad \le \frac{1}{\kappa N} \bigg( \E\bigg( \sup\limits_{0 \le t \le T} \mathcal{H}(\rho_t^N \vert \rho_t^{\otimes N} \bigg) \\
     &\quad \quad +  \E\bigg(  \log\bigg( \int_{\R^{dN} } \rho^{\otimes N}_s(x) \exp\bigg( \kappa N   \norm{\nabla \cdot ( k*\mu_t^N \mu_t^N -k *\rho_t \rho_t )}^4_{H^{-\alpha}(\R^d)} \bigg) \Id x \bigg) \bigg) 
\end{align*}
for \(\kappa > 0\). 
    Utilizing Parseval's identity we have 
    \begin{equation*}
        \norm{\nabla \cdot( k*\mu_t^N \mu_t^N -k *\rho_t \rho_t )}^4_{H^{-\alpha}(\R^d)}  
        = \norm{ (1+|\xi|^2)^{-\alpha/2} \mathcal F(\nabla \cdot( k*\mu_t^N \mu_t^N -k *\rho_t \rho_t))}_{L^2(\R^d)}^4.
    \end{equation*}
    Let \(\varphi \in \mathcal S(\R^d)\), then we identify the Fourier transformation as follows 
    \begin{align*}
        \langle  \mathcal F(\nabla \cdot( k*\mu_t^N \mu_t^N ), \varphi \rangle
        &= -\langle  \mu_t^N , k*\mu_t^N \cdot \nabla \mathcal F ( \varphi) \rangle \\
        &= -\int_{\R^d}  \int_{\R^d} \nabla_{z} \exp(-i z \cdot \xi) \cdot k*\mu_t^N(z)   \varphi (\xi) \Id \xi \Id \mu_t^N(z) \\
        &= \langle i \int_{\R^d}  \exp(-i z \cdot \xi) \xi \cdot k*\mu_t^N(z)     \Id \mu_t^N(z ) ,\varphi \rangle .
    \end{align*}
    The same formula holds for \(\mu_t^N \) replaced by \(\rho_t\). We find  
    \begin{align*}
        \norm{\nabla \cdot( k*\mu_t^N \mu_t^N -k *\rho_t \rho_t )}^4_{H^{-\alpha}(\R^d)} 
        &= \bigg\|(1+|\xi|^2)^{-\frac{\alpha}{2}}\bigg( \int_{\R^d}  \exp(-i \xi \cdot z) \xi \cdot k*\mu_t^N(z)     \Id \mu_t^N(z) \\
        &\quad - \int_{\R^d}  \exp(-i \xi \cdot z) \xi  \cdot k*\rho_t(y)   \rho_t(y)  \Id y \bigg) \bigg\|_{L^2(\R^d)}^4 \\
        & \le\bigg \|(1+|\xi|^2)^{-\frac{\alpha-2}{2}}\bigg( \int_{\R^d}  \exp(-i \xi \cdot z)  k*\mu_t^N(z)     \Id \mu_t^N(z)\\
        & \quad  - \int_{\R^d}  \exp(-i \xi \cdot z)   k*\rho_t(y)   \rho_t(y)  \Id y \bigg) \bigg\|_{L^2(\R^d)}^4 
    \end{align*}
    As in Lemma~\ref{lemma: uniform_estimate} we define the function
    \begin{equation}
        \phi(t,\xi,z,\tilde z) : = \exp(-i \xi \cdot z)  k(z-\tilde z) -
        \int_{\R^d}  \exp(-i \xi \cdot y)  k*\rho_t(z)   \rho_t(y)  \Id y
    \end{equation}
    and notice that 
    \begin{align*}
        &\quad \bigg| \int_{\R^d}  \exp(-i \xi \cdot z)  k*\mu_t^N(z)     \Id \mu_t^N(z) - \int_{\R^d}  \exp(-i \xi \cdot z)  k*\rho_t(y)   \rho_t(y)  \Id y \bigg|^4 \\
        &= |\langle \phi(t,\xi,\cdot, \cdot), \mu_t^N \otimes \mu_t^N  \rangle|^4. 
    \end{align*}
    Additionally, we have the cancellation property 
    \begin{equation}
        \int_{\R^d\times \R^d } \phi(t,\xi,z, \tilde z ) \rho_t(z) \rho_t(\tilde z) \Id z \Id \tilde z = 0 
    \end{equation}
    and the uniform bound 
    \begin{equation*}
        |\phi(t,\xi, z,\tilde z)| \le 2\norm{k}_{L^\infty(\R^d)} . 
    \end{equation*}
    Notice that the cancellation property also holds for the real and imaginary part of the integrand by the simple fact that we can interchange real and imaginary part with the integral operator. Putting everything together, applying Jensen inequality, Fubini's theorem and Lemma~\ref{lemma: modification_elln} we find 
    \begin{align*}
        &\int_{\R^{dN} } \rho^{\otimes N}_s(x) \exp\bigg( \kappa N   \norm{\nabla \cdot ( k*\mu_t^N \mu_t^N -k *\rho_t \rho_t )}^4_{H^{-\alpha}(\R^d)} \bigg) \Id x \\
        &\quad \le \int_{\R^{dN} } \rho^{\otimes N}_s(x) \exp\bigg( \kappa N \bigg(  \int_{\R^d} (1+|\xi|^2)^{-\alpha+2} |\langle \phi(t,\xi,\cdot, \cdot), \mu_t^N \otimes \mu_t^N  \rangle|^2  \Id \xi \bigg)^2 \bigg)  \Id x \\
        &\quad \le   \sup\limits_{\xi \in \R^d} \int_{\R^{dN} } \rho^{\otimes N}_s(x) \exp\bigg( \kappa N \norm{ (1+|\cdot|^2)^{-\alpha+2} }_{L^1(\R^d)}^2 |\langle \phi(t,\xi,\cdot, \cdot), \mu_t^N \otimes \mu_t^N  \rangle|^4\bigg)  \Id x  \\
        &\quad \le C\big(\norm{k}_{L^\infty(\R^d)},\alpha \big) ,
    \end{align*}
    where we choose \(\kappa\) small enough such that \(\kappa \norm{ (1+|\cdot|^2)^{-\alpha+2} }_{L^1(\R^d)}  \phi \) satisfy the smallness condition of Lemma~\ref{lemma: modification_super}. Finally, we arrive at 
    \begin{align*}
        &\E\bigg( \sup\limits_{0 \le t \le T}  \norm{\nabla \cdot ( k*\mu_t^N \mu_t^N -k *\rho_t \rho_t )}^2_{H^{-\alpha}(\R^d)} \bigg)
        \\
        &\quad \le \frac{C\big(\norm{k}_{L^\infty(\R^d)},\alpha \big)}{ N} \bigg( \E\bigg( \sup\limits_{0 \le t \le T} \mathcal{H}(\rho_t^N \vert \rho_t^{\otimes N} \bigg) +1 \bigg). 
    \end{align*}
\end{proof}

 The following bound will be very useful in the future we reproduce~\cite[Lemma~2.9]{Xianliang2023}. 
\begin{lemma} \label{lemma: uniform_identification}
Let \(\varphi \colon \R^d \mapsto \R^d \) be bounded and continuous. Then there exists a constant \(C(\varphi)\) depending on \(\varphi\) such that 
    \begin{equation*}
        \E ( |\langle \varphi \cdot  k*(\mu_t^N -\rho_t),\mu_t^N -\rho_t \rangle | ) 
        \le \frac{C(\varphi)}{N} \bigg( \E\bigg( \sup\limits_{0 \le t \le T} \mathcal{H}(\rho_t^N \vert \rho_t^{\otimes N} \bigg) +1 \bigg). 
    \end{equation*}
\end{lemma}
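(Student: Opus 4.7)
The plan is to follow the variational-formula route already used in Lemmas~\ref{lemma: uniform_estimate} and~\ref{lemma: uniform_interaction}, but now only a first power of the bilinear form appears in the expectation, so the simpler exponential law of large numbers (Lemma~\ref{lemma: jabinwang_exponential}) suffices, rather than the $|\cdot|^4$-version needed before. Concretely, I would condition on \(\cF_t^W\), use that the law of \((X_t^{1},\ldots,X_t^{N})\) given \(\cF_t^W\) is \(\rho_t^N\), and apply~\eqref{eq: var_formula} with \(\mu=\rho_t^N\) and \(\tilde\mu=\rho_t^{\otimes N}\). This reduces the claim to a pathwise exponential integrability bound for \(|\langle \varphi\cdot k*(\mu_t^N-\rho_t),\mu_t^N-\rho_t\rangle|\) under \(\rho_t^{\otimes N}\).

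The key algebraic step is to rewrite this bilinear form as \(\langle \phi_t,\mu_t^N\otimes\mu_t^N\rangle\) for a kernel \(\phi_t\) satisfying the two cancellation properties required by Lemma~\ref{lemma: jabinwang_exponential}. Starting from \(\phi_t^{(0)}(z,\tilde z):=\varphi(z)\cdot k(z-\tilde z)\), I would introduce the doubly centered kernel
\begin{equation*}
\phi_t(z,\tilde z):=\phi_t^{(0)}(z,\tilde z)-A_t(z)-B_t(\tilde z)+C_t,
\end{equation*}
with \(A_t(z):=\int \phi_t^{(0)}(z,y)\rho_t(y)\Id y\), \(B_t(\tilde z):=\int \phi_t^{(0)}(y,\tilde z)\rho_t(y)\Id y\) and \(C_t:=\int\!\int \phi_t^{(0)}(y,\tilde y)\rho_t(y)\rho_t(\tilde y)\Id y\Id \tilde y\). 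Expanding \(\Id(\mu_t^N-\rho_t)\otimes \Id(\mu_t^N-\rho_t)\) into four terms verifies the identity \(\langle \varphi\cdot k*(\mu_t^N-\rho_t),\mu_t^N-\rho_t\rangle=\langle \phi_t,\mu_t^N\otimes\mu_t^N\rangle\), and a one-line computation gives \(\int \phi_t(z,\tilde z)\rho_t(z)\Id z=0\) and \(\int \phi_t(z,\tilde z)\rho_t(\tilde z)\Id \tilde z=0\) almost surely. Since \(\rho_t\) is a probability measure pathwise, the uniform bound \(\|\phi_t\|_{L^\infty(\R^{2d})}\le 4\|\varphi\|_{L^\infty}\|k\|_{L^\infty}\) holds for every \((\omega,t)\).

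With \(\phi_t\) in hand, I would choose \(\kappa>0\) sufficiently small, depending only on \(\|\varphi\|_{L^\infty}\) and \(\|k\|_{L^\infty}\), so that \(\tilde C\|\kappa\phi_t\|_{L^\infty}^2<1\) uniformly in \(\omega\); Lemma~\ref{lemma: jabinwang_exponential}, applied pathwise to \(\kappa\phi_t\) with \(\bar\rho=\rho_t\), then gives
\begin{equation*}
\int_{\R^{dN}} \rho_t^{\otimes N}(x)\exp\bigl(\kappa N\,|\langle \phi_t,\mu_t^N\otimes \mu_t^N\rangle|\bigr)\Id x\le \frac{2}{1-\gamma}
\end{equation*}
uniformly in \(\omega\) and \(N\). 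Plugging this into~\eqref{eq: var_formula} on the event \(\{\cF_t^W\}\), taking expectations and majorizing \(\E(\mathcal H(\rho_t^N|\rho_t^{\otimes N}))\) by \(\E(\sup_{0\le t\le T}\mathcal H(\rho_t^N|\rho_t^{\otimes N}))\) delivers the asserted inequality with \(C(\varphi)\) depending only on \(\|\varphi\|_{L^\infty}\) and \(\|k\|_{L^\infty}\). The only delicate point is the \(\omega\)-dependence of \(\phi_t\) through \(\rho_t\), but this is harmless since both the \(L^\infty\) bound and the cancellations hold pathwise, so Lemma~\ref{lemma: jabinwang_exponential} applies with a deterministic constant.
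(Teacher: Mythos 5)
Your proposal is correct and follows essentially the same route as the paper: the same doubly centered kernel \(\phi_t = \varphi(z)\cdot k(z-\tilde z) - A_t - B_t + C_t\), the same uniform bound \(4\|\varphi\|_{L^\infty}\|k\|_{L^\infty}\) and cancellation properties, followed by the variational formula~\eqref{eq: var_formula} conditioned on \(\cF_t^W\) and Lemma~\ref{lemma: jabinwang_exponential} applied to a suitably rescaled \(\kappa\phi_t\). Your version of the second centering term \(B_t(\tilde z)=\int\varphi(y)\cdot k(y-\tilde z)\rho_t(y)\Id y\) is in fact the correct one (the paper's \(\langle k(\cdot-y),\rho_t\rangle\) omits the factor \(\varphi\), evidently a typo), and your explicit remark that the \(\omega\)-dependence of \(\phi_t\) through \(\rho_t\) is harmless because the bounds hold pathwise is a point the paper leaves implicit.
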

\begin{proof}
    Defining 
    \begin{equation*}
        \phi(t,x,y) := k(x-y)\cdot  \varphi(x) - \varphi(x) \cdot k*\rho_t(x) - \langle k(\cdot-y),\rho_t \rangle + \langle \varphi \cdot k *\rho_t,\rho_t \rangle 
    \end{equation*}
    and observe that 
    \begin{equation*}
        \langle \varphi\cdot  k*(\mu_t^N -\rho_t),\mu_t^N -\rho_t \rangle 
        = \langle \phi(t,\cdot, \cdot), \mu_t^N \otimes \mu_t^N \rangle 
    \end{equation*}
    and \(| \phi(t,x,y)| \le 4 \norm{k}_{L^\infty(\R^d)} \norm{\varphi}_{L^\infty(\R^d)} \). 
    We also have the cancellation properties 
    \begin{equation*}
        \int_{\R^d} \phi(t,x,y) \rho_t(x) \Id x =0 , \quad \int_{\R^d} \phi(t,x,y) \rho_t(y) \Id y=0.
    \end{equation*}
    Applying inequality~\eqref{eq: var_formula} and Lemma~\ref{lemma: jabinwang_exponential} in a similar fashion as Lemma~\ref{lemma: uniform_estimate} we obtain 
    \begin{equation*}
         \E ( |\langle \varphi \cdot  k*(\mu_t^N -\rho_t),\mu_t^N -\rho_t \rangle | ) 
        \le \frac{C}{N} \bigg( \E\bigg( \sup\limits_{0 \le t \le T} \mathcal{H}(\rho_t^N \vert \rho_t^{\otimes N} \bigg) +1 \bigg). 
    \end{equation*}
\end{proof}

\begin{remark} \label{remark: vortex_uniform}
As mentioned the above Lemmas are adaptations of~\cite[Lemma~2.6, Lemma~2.8, Lemma~2.9]{Xianliang2023}. Hence, utilizing similar techniques as in the Lemmas mentioned above we can extend every Lemma in this section to the case where \(\nu = 0\) and the interaction kernel is given by Biot--Savart kernel \(k_{\mathrm{vortex}}\). The main technique is a symmetrization technique in combination with the relative entropy estimates provided by~\cite{Feng2023} on the whole space \(\R^d\). Notice, that in this case we do not utilize \(k \in L^\infty(\R^d)\).  
\end{remark}

\section{Tightness of fluctuation measure} \label{sec: tightness}
In this section we utilize the findings from Section~\ref{sec: uniform_estimates} to demonstrate the tightness of \((\eta^N, N \in \N)\). 
For each \(\varphi \in S(\R^d)\) We define the martingale part sequence as 
\begin{equation}\label{eq: martingal_sequence}
    \mathcal M_t^N(\varphi) := \frac{1}{\sqrt{N}} \sum\limits_{i=1}^N \int\limits_0^t  ( \sigma^{\mathrm{T}}(s,X_s^{i}) \nabla \varphi(X_s^{i}) ) \Id B_s^{i}
\end{equation}
and the common noise sequence as 
\begin{equation} \label{eq: common_noise_martingale_sequence}
\hat{\mathcal M}_t^N (\varphi) :=  \sum\limits_{j=1}^d \sum\limits_{\tilde{l}=1}^{\tilde{m}} \int\limits_0^t   \langle \nu^{j,\tilde{l}}(s,\cdot)  \partial_{z_j} \varphi(\cdot), \eta_s^N \rangle  \Id W_s^{\tilde{l}} . 
\end{equation}
Similar to~\cite{Xianliang2023} we need to define both integrals as measurable maps into the space \(H^{-\alpha}\), i.e. the maps 
\begin{equation*}
     \mathcal M_t^N \colon \Omega \mapsto H^{-\alpha} \quad \mathrm{and} \quad  \hat{\mathcal M}_t^N\colon \Omega \mapsto H^{-\alpha}
\end{equation*}
need to be strongly measurable. Notice, that \(\hat{\mathcal M}_t^N\) has a stochastic integral representation and, therefore, it will be enough to verify the square integrability condition in  \(H^{-\alpha}\). Indeed, this is true for \(\alpha> d/2+1\).
\begin{lemma}\label{lemma: versions_of_stochastic_integrals}
    For \(\alpha > d/2+1\) and for each \(N \in \N\) there exists a progressively measurable processes \(\mathcal{M}_t^N \), \(\hat{\mathcal M}_t^N \) with values in \(H^{-\alpha}(\R^d)\) such that~\eqref{eq: martingal_sequence} and~\eqref{eq: common_noise_martingale_sequence} hold almost surely for all \(t \ge 0\) and \(\varphi \in H^\alpha(\R^d)\). 
\end{lemma}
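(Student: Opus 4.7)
The plan is to view both stochastic integrals as Hilbert-space valued It\^{o} integrals in the separable Hilbert space $H^{-\alpha}(\R^d)$, using the Sobolev embedding $H^\alpha(\R^d) \hookrightarrow C^1_b(\R^d)$ which holds precisely because $\alpha > d/2 + 1$, and then recover the scalar formulas by duality. The key observation is that both integrands, once identified with linear functionals on $H^\alpha$, are bounded with norms controlled uniformly in $(s,\omega)$ (for the idiosyncratic noise) or in $L^2(\Omega)$ uniformly in $s$ (for the common noise).

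First, I would fix $i$ and observe that the integrand $\varphi \mapsto \sigma^T(s,X_s^{i})\nabla \varphi(X_s^{i})$ in~\eqref{eq: martingal_sequence} is, by the above embedding, a bounded linear map $H^\alpha \to \R^m$ with operator norm controlled by $C\norm{\sigma}_{L^\infty}$; by duality it identifies a progressively measurable process $Z_s^{i,N}$ with values in $H^{-\alpha}(\R^d; \R^m)$. Strong measurability follows from Pettis's theorem, using separability of $H^{-\alpha}(\R^d)$ together with scalar measurability obtained by pairing with a countable dense subset of $H^\alpha(\R^d)$. For the common noise integrand $\varphi \mapsto \langle \nu^{j,\tilde l}(s,\cdot)\partial_{z_j}\varphi, \eta_s^N \rangle$ in~\eqref{eq: common_noise_martingale_sequence}, I would set $\beta := \alpha - 1 > d/2$ and combine the multiplier estimate $\norm{\nu^{j,\tilde l}\partial_{z_j}\varphi}_{H^{\beta}(\R^d)} \le C\norm{\nu}_{B^a_{\infty,\infty}(\R^d)}\norm{\varphi}_{H^\alpha(\R^d)}$ (valid since $a > d/2+2 > \beta$) with the inclusion $\eta_s^N \in H^{-\beta}(\R^d)$ provided by Lemma~\ref{lemma: uniform_estimate} applied with $\beta$ in place of $\alpha$. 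This yields
\begin{equation*}
|\langle \nu^{j,\tilde l}(s,\cdot)\partial_{z_j}\varphi, \eta_s^N \rangle| \le C\,\norm{\nu}_{B^a_{\infty,\infty}(\R^d)}\,\norm{\eta_s^N}_{H^{-\beta}(\R^d)}\,\norm{\varphi}_{H^\alpha(\R^d)},
\end{equation*}
so the functional defines an element $\tilde Z_s^{j,\tilde l, N} \in H^{-\alpha}(\R^d)$ whose norm is square-integrable on $[0,T] \times \Omega$ by Lemma~\ref{lemma: uniform_estimate} and Assumption~\ref{ass: entropy}.

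Next, I would apply the classical It\^{o} integration theory in separable Hilbert spaces (e.g.\ Da Prato--Zabczyk) with respect to the finite-dimensional Brownian motions $B^{i}$ and $W^{\tilde l}$ to construct continuous $H^{-\alpha}(\R^d)$-valued martingales
\begin{equation*}
\mathcal{M}_t^N := \frac{1}{\sqrt{N}}\sum_{i=1}^N \int_0^t Z_s^{i,N}\,\d B_s^{i}, \qquad \hat{\mathcal{M}}_t^N := \sum_{j=1}^d\sum_{\tilde l=1}^{\tilde m}\int_0^t \tilde Z_s^{j,\tilde l, N}\,\d W_s^{\tilde l}.
\end{equation*}

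Finally, to verify the scalar formulas, I would fix $\varphi \in H^\alpha(\R^d)$ and note that the real-valued process $t \mapsto \langle \mathcal{M}_t^N,\varphi\rangle$ coincides a.s., jointly in $t$, with the right-hand side of~\eqref{eq: martingal_sequence} by the defining property of the Hilbert-space It\^{o} integral (duality commutes with stochastic integration against a finite-dimensional Brownian motion). To obtain a single null set serving all $\varphi$ simultaneously, I would pick a countable dense subset $(\varphi_k)_{k \in \N} \subset H^\alpha(\R^d)$, intersect the countably many null sets, and extend to arbitrary $\varphi \in H^\alpha(\R^d)$ by the a.s.\ continuity of both sides as linear functionals of $\varphi$; the same argument applies to $\hat{\mathcal{M}}^N$. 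The main obstacle is the strong measurability of the integrands as $H^{-\alpha}(\R^d)$-valued processes, which is settled by Pettis's theorem via separability and scalar measurability; a secondary subtlety is the uniform-in-$\varphi$ almost sure identity, resolved through countable dense subsets and continuity in $\varphi$.
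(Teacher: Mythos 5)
Your proposal is correct, but it takes a genuinely different route from the paper. You lift the integrands to $H^{-\alpha}(\R^d)$-valued progressively measurable processes first --- using the Sobolev embedding $H^{\alpha}(\R^d)\hookrightarrow C^1_b(\R^d)$ for the Dirac-type idiosyncratic integrand and the pointwise-multiplier estimate together with $\eta^N_s\in H^{-(\alpha-1)}(\R^d)$ for the common-noise integrand --- then integrate in the Hilbert space and recover the scalar identities by duality along a countable dense set. The paper instead constructs the scalar integrals first, establishes a pathwise bound of the form $|S(\varphi)(\omega)|\le C(\omega)\norm{\varphi}_{H^{\alpha}(\R^d)}$ with $C$ integrable, and invokes the realization lemma of Flandoli (Lemma~\ref{lem: appendix_aux}); for the idiosyncratic part this bound is obtained through an explicit Fourier/Parseval computation combined with a stochastic Fubini argument and It\^o's isometry, while for the common-noise part the paper, like you, passes through an $H^{-(\alpha-1)}(\R^d)$-valued inner integral and the vector-valued BDG inequality. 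Both arguments consume the hypothesis $\alpha>d/2+1$ in the same way (integrability of $(1+|\xi|^2)^{-(\alpha-1)}$ versus the $C^1_b$ embedding), and both share the implicit restriction that the multiplier theorem requires $a>\alpha-1$, which holds in the regime of $\alpha$ actually used. Your route is arguably more systematic: it delivers continuity in $t$ of the $H^{-\alpha}(\R^d)$-valued processes, and hence progressive measurability, directly from the Hilbert-space It\^o theory, whereas the paper must append a separate Pettis-plus-continuity remark; the paper's route, on the other hand, avoids setting up the infinite-dimensional integration explicitly for the idiosyncratic term at the cost of the ad hoc Fourier computation. Your handling of the single null set via a countable dense subset of $H^{\alpha}(\R^d)$ is the right way to read the ``for all $\varphi$'' in the statement and is consistent with what the paper implicitly does.
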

We postpone the proof to the Appendix. 

\begin{lemma} \label{lemma: tightness_martingale}
    For every \(\alpha > d/2 +1\) the sequence \((\mathcal M^N, N \in \N)\) is tight in the space \(C([0,T];H^{-\alpha})\) and the following inequalities
    \begin{align*}
        \E\big(\norm{\mathcal M_t^N - \mathcal{M}_s^N}_{H^{-\alpha} /\R^d) }^{2\theta} \big) 
        &\le  C(d,m,\alpha_2,\theta', \sigma) |t-s|^\theta , \quad 0 \le s \le t \le T \\
        \E\big( \sup\limits_{0 \le t \le T} \norm{\mathcal M_t^N}_{H^{-\alpha}(\R^d) }^{2\theta} \big) 
        &\le  C(d,m,\alpha_2,\theta', \sigma,T). 
    \end{align*}
    holds true for \(\theta > 1\).   
\end{lemma}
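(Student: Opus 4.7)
The plan is to realize the sequence $(\mathcal{M}_t^N, t \in [0,T])$ as continuous $H^{-\alpha}(\R^d)$-valued martingales and combine the Burkholder--Davis--Gundy inequality in Hilbert spaces with Kolmogorov's tightness criterion. Lemma~\ref{lemma: versions_of_stochastic_integrals} already provides such a realization for $\alpha > d/2 + 1$. Via the Sobolev embedding $H^\alpha(\R^d) \hookrightarrow C^1(\R^d)$, for fixed $(s, i, l)$ the linear functional $\Phi_s^{i,l}\colon \varphi \mapsto \sigma^{\cdot, l}(s, X_s^i) \cdot \nabla \varphi(X_s^i)$ defines an element of $H^{-\alpha}(\R^d)$ with operator norm bounded by $C_\alpha \norm{\sigma^{\cdot, l}(s, \cdot)}_{L^\infty(\R^d)}$. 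Assumption~\ref{ass: main} then yields a deterministic $L^\infty$-bound on the Hilbert--Schmidt norm $\sum_{l=1}^m \norm{\Phi_s^{i,l}}_{H^{-\alpha}(\R^d)}^2$, uniformly in $(i, \omega, s)$.

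The first inequality follows from the independence of $(B^i)_{i=1,\ldots,N}$ together with BDG for Hilbert-space-valued stochastic integrals,
\begin{equation*}
\mathbb{E}\, \norm{\mathcal{M}_t^N - \mathcal{M}_s^N}_{H^{-\alpha}(\R^d)}^{2\theta} \le C_\theta\, \mathbb{E}\Bigg[\Bigg(\frac{1}{N}\sum_{i=1}^N\int_s^t \sum_{l=1}^m \norm{\Phi_u^{i,l}}_{H^{-\alpha}(\R^d)}^2 \, du \Bigg)^\theta\Bigg] \le C(d, m, \alpha, \theta, \sigma)\, |t-s|^\theta,
\end{equation*}
after inserting the uniform Hilbert--Schmidt bound. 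The supremum estimate follows from Doob's $L^p$-maximal inequality for the Hilbert-space-valued martingale $(\mathcal{M}_t^N, t \in [0,T])$, or equivalently by taking $s = 0$ in the above and using that the right-hand side is nondecreasing in $t$.

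For tightness in $C([0,T]; H^{-\alpha}(\R^d))$, since $\theta > 1$ the increment bound reads $|t-s|^{1 + (\theta - 1)}$, giving uniform Hölder regularity of order $(\theta - 1)/(2\theta)$. Combined with the trivial tightness of $\mathcal{M}_0^N \equiv 0$, Kolmogorov's tightness criterion (Kallenberg, Theorem 23.7) yields the required equicontinuity and hence tightness of the sequence in $C([0,T]; H^{-\alpha}(\R^d))$.

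The main technical obstacle is the uniform Hilbert--Schmidt bound on the integrand. It forces the threshold $\alpha > d/2 + 1$, since one effectively needs first-order derivatives of a Dirac mass to lie in $H^{-\alpha}(\R^d)$ with norm uniform in the base point; this is precisely the threshold already appearing in Lemma~\ref{lemma: versions_of_stochastic_integrals}, and the uniform $L^\infty$-bound on $\sigma$ from Assumption~\ref{ass: main} is what converts the pointwise estimate into the deterministic constant used in BDG.
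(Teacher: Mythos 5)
Your proof is correct, but it takes a genuinely different route from the paper's. The paper never works with the integrand as an $H^{-\alpha}(\R^d)$-valued object: it computes the Fourier transform of $\mathcal M^N_t$ explicitly (justifying the interchange via the stochastic Fubini theorem), writes the $H^{-\alpha}$-norm through Parseval, splits $2\alpha=\alpha_1+\alpha_2$ and applies H\"older's inequality in the frequency variable with exponents $\theta,\theta'$, then applies the \emph{scalar} BDG inequality pointwise in $\xi$ and integrates, choosing $\alpha_1,\alpha_2$ so that $\alpha_2\theta'>d$ and $(\alpha_1-2)\theta>d$ --- which is why the constants in the statement carry $\alpha_2$ and $\theta'$. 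You instead observe that $\varphi\mapsto\sigma^{\cdot,l}(u,X_u^i)\cdot\nabla\varphi(X_u^i)$ is a bona fide element of $H^{-\alpha}(\R^d)$ with norm bounded by $C\norm{\sigma}_{L^\infty}$ once $\alpha>d/2+1$ (Sobolev embedding $H^{\alpha}\hookrightarrow C^1_b$), and then invoke the Hilbert-space-valued BDG inequality together with the independence of the $B^i$, so that the bracket is $\tfrac1N\sum_i\int_s^t\sum_l\norm{\Phi^{i,l}_u}^2_{H^{-\alpha}}\,du\le C|t-s|$. This is exactly the tool the paper itself uses for the common-noise martingale in Lemma~\ref{lemma: tightness_common_noise}, so your argument is shorter, avoids the stochastic Fubini step and the frequency-splitting bookkeeping, and arrives at the same threshold $\alpha>d/2+1$ and the same increment bound $|t-s|^{\theta}$; the paper's Fourier computation buys an explicit pathwise representation of $\mathcal F(\mathcal M^N_t)$ that it reuses elsewhere (e.g.\ in Lemma~\ref{lemma: versions_of_stochastic_integrals}). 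Your final step (Kolmogorov's criterion plus $\mathcal M^N_0\equiv 0$) is identical to the paper's deduction of tightness and inherits the same caveat about compactness in $H^{-\alpha}(\R^d)$ that the paper discusses in Remark~\ref{remark: compactness_ness}, so no additional gap is introduced on your side.
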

\begin{proof}
    Let us look at the sequence \((\mathcal M_t^N, N \in \N)\) under the Fourier transformation as a Schwarz distribution on the space \(H^{\alpha}\). Hence, let \(\varphi \in \testfunctions{\R^d}\), then applying \(\mathcal M_t^N\) on \(\varphi\) we obtain 
    \begin{align}
    \begin{split}
       \langle  \mathcal{F}(\mathcal{M}_t^N),\varphi\rangle &= 
       \langle \mathcal M_t^N , \mathcal{F}(\varphi) \rangle \\
       &= \frac{1}{\sqrt{N}} \sum\limits_{i=1}^N  \int\limits_0^t  ( \sigma^{\mathrm{T}} \nabla \mathcal{F}\varphi )(X_s^{i}) \Id B_s^{i}\\
       &=  \frac{1}{\sqrt{N}} \sum\limits_{i=1}^N \sum\limits_{j=1}^d  \sum\limits_{l=1}^m  \int\limits_0^t \sigma^{j,l}(t,X_t^{i})  \partial_{\xi_j}  \mathcal{F}(\varphi )(X_s^{i}) \Id B_s^{i,l} \\
       &=  \frac{1}{\sqrt{N}} \sum\limits_{i=1}^N \sum\limits_{j=1}^d  \sum\limits_{l=1}^m  \int\limits_0^t \sigma^{j,l}(t,X_t^{i})  \mathcal{F}( -iz_j  \varphi )  (X_s^{i}) \Id B_s^{i,l} \\
       &= -\frac{1}{\sqrt{N}} \sum\limits_{i=1}^N \sum\limits_{j=1}^d  \sum\limits_{l=1}^m \int\limits_0^t  \int_{\R^d} \sigma^{j,l}(t,X_t^{i}) \exp(-i X_s^{i} \cdot z) iz_j  \varphi(z) \Id z    \Id B_s^{i,l} . 
    \end{split}
    \end{align}
    
    Now, we want to apply the stochastic Fubini theorem~\cite{veraar2012}. We check the integrability condition
    \begin{equation*}
        \int_{\R^d} \bigg( \int\limits_0^T | \sigma^{j,l}(t,X_t^{i}) \exp(-i X_s^{i} \cdot z) iz_j  \varphi(z) |^2 \Id s \bigg)^{\frac{1}{2}} \Id z  
        \le C(T,\sigma)  \int_{\R^d} |z| |\varphi(z)| \Id z < \infty.
    \end{equation*}

    Hence, we can interchange the stochastic integral with the Lebesgue integral and arrive at 
    \begin{equation*}
         \langle  \mathcal{F}(\mathcal{M}_t^N),\varphi\rangle 
         = \bigg\langle -\frac{1}{\sqrt{N}} \sum\limits_{i=1}^N \sum\limits_{j=1}^d  \sum\limits_{l=1}^m \int\limits_0^t  \sigma^{j,l}(t,X_t^{i}) \exp(-i X_s^{i} \cdot z ) iz_j     \Id B_s^{i,l} ,  \varphi(z) \bigg \rangle_{L^2(\R^d)}.
    \end{equation*}
    This means the Fourier transformation of \(\mathcal M_t^N \) is given by the left expression in the right bracket for \(\varphi \in \testfunctions{\R^d}\). Since \(\testfunctions{\R^d}\) is dense in \(H^{\alpha}(\R^d)\) we obtain a unique extension of the operator. Notice, that this operator is also explicitly given for smooth functions \(\varphi\) such that \(|\cdot| \varphi \in L^1(\R^d)\). 
    Next, let \(\theta > 1\), \(1/\theta + 1/ \theta' =1\), \(\alpha_1+\alpha_2 = 2\alpha \)
    Let us use Parseval's identity again and the fact that \(2\alpha > d +2\) to obtain 
    \begin{align} \label{eq: compactness_martingale_aux}
        &\norm{\mathcal M_t^N - \mathcal{M}_s^N}_{H^{-\alpha} }^{2\theta} \nonumber \\
        &\quad= \bigg(\int_{\R^d} (1+|\xi |^2)^{-(\alpha_1+\alpha_2)} |\mathcal{F}(\mathcal M_t^N - \mathcal{M}_s^N)|^2(\xi) \Id \xi \bigg)^{\theta} \nonumber \\
        &\quad\le \bigg(\int_{\R^d} (1+|\xi |^2)^{-\alpha_1 \theta } |\mathcal{F}(\mathcal M_t^N - \mathcal{M}_s^N)|^{2\theta}(\xi) \Id \xi \bigg)
        \bigg( \int_{\R^d} (1+|\xi |^2)^{-\alpha_2 \theta' }\Id \xi \bigg)^{\frac{1}{\theta'}} \nonumber \\
        &\quad\le C(d,m) \sum\limits_{j=1}^d \sum\limits_{l=1}^m  \bigg(\int_{\R^d} (1+|\xi |^2)^{-\alpha_1 \theta } \bigg | \frac{1}{\sqrt{N}} \sum\limits_{i=1}^N  \int\limits_s^t  \sigma^{j,l}(t,X_t^{i}) \exp(-i X_s^{i} \cdot \xi ) i \xi_j     \Id B_s^{i,l} \bigg|^{2\theta} \Id \xi \bigg) \nonumber  \\
        & \quad \quad \cdot 
        \bigg( \int_{\R^d} (1+|\xi |^2)^{-\alpha_2 \theta' }\Id \xi \bigg)^{\frac{1}{\theta'}} 
    \end{align}
    In the following we need to choose \(\alpha_2' \) with \(\alpha_2 \theta' > d\) such that the last term is integrable. 
    We keep this condition in mind and will choose explicit parameters at the end. 
    For the stochastic integral we apply the BDG inequality and the exchangability of the particle system to obtain 
    \begin{align*}
        &\E\bigg(  \bigg | \frac{1}{\sqrt{N}} \sum\limits_{i=1}^N  \int\limits_s^t  \sigma^{j,l}(t,X_t^{i}) \exp(-i X_s^{i} \cdot \xi) i \xi_j    \Id B_s^{i,l} \bigg|^{2\theta} \bigg)  \\
        & \quad \le  \E\bigg( \bigg( \frac{1}{N} \sum\limits_{i=1}^N  \int\limits_s^t  |\sigma^{j,l}(t,X_t^{i}) \exp(-i X_s^{i} \cdot \xi ) i \xi_j |^2    \Id s \bigg)^{\theta} \bigg)  \\
        &\quad \le C(\sigma) |t-s|^\theta |\xi|^{2\theta}    
    \end{align*}
    
    Consequently, taking the expected value in inequality~\eqref{eq: compactness_martingale_aux} and afterwards Fubini's theorem, we can apply the previous observation to obtain 
    \begin{align*}
        \E\big(\norm{\mathcal M_t^N - \mathcal{M}_s^N}_{H^{-\alpha} }^{2\theta} \big) 
        &\le C(d,m,\alpha_2,\theta', \sigma) |t-s|^\theta \int_{\R^d} |\xi|^{2\theta}    (1+|\xi |^2)^{-\alpha_1 \theta } \Id \xi \\
        &\le  C(d,m,\alpha_2,\theta', \sigma) |t-s|^\theta \int_{\R^d}   (1+|\xi |^2)^{(-\alpha_1 +2) \theta  } \Id \xi . 
    \end{align*}
    It remains to assure that the parameters satisfy integrability of the integrals. Hence \(\alpha_1, \alpha_2, \theta, \theta'\) need to fulfill the following conditions 
    \begin{itemize}
        \item \(\alpha_2 \theta' > d \), \((\alpha_1-2) \theta > d, \theta> 1 \), 
        \item \(\alpha_1 + \alpha_2 = 2\alpha  \), \(1/\theta + 1/\theta' = 1\).
    \end{itemize}
    Choosing \(\alpha_1 = \alpha +1 - \frac{d}{2}+ \frac{d}{\theta}\), \(\alpha_ 2 =  \alpha - 1 + \frac{d}{2}- \frac{d}{\theta}\) and arbitrary \(\theta>1\) satisfies all conditions and the integral is finite. 
    Replicating the computations with \(s=0\) and the supremum inside the expectation, we obtain the remaining inequality. 
\end{proof}

\begin{lemma}[Almost tightness of common noise] \label{lemma: tightness_common_noise}
For every \(\alpha >d/2+1\) the sequence \((\tilde M_t^N, N \in \N)\) satisfies the inequalities 
  \begin{align*}
        \E\big(  \norm{\hat {\mathcal  M}_t^N- \hat{\mathcal M}_s^N}_{H^{-\alpha}}^{2\theta}\big)
        &\le  C(d,\tilde m,\nu)  \E \bigg( \bigg( \int\limits_{s}^t \norm{\eta_r}_{H^{-\alpha+1}(\R^d)}^{2}  \Id r \bigg)^{\theta} \bigg), \\
        \E \big( \sup\limits_{0\le t \le T} \norm{\hat{\mathcal M}_t^N}_{H^{-\alpha}}^{2\theta}\big)
        &\le  C(d,\tilde m,\nu,\alpha ) \bigg(  \sup\limits_{0 \le t \le T}\E\bigg( \mathcal{H}(\rho_t^N \vert \rho_t^{\otimes N} \bigg)  + 1 \bigg) , 
    \end{align*}
    for \(s < t \) and arbitrary \(\theta>0\).  

\end{lemma}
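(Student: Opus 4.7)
The plan is to view $\hat{\mathcal M}^N$ as an $H^{-\alpha}$-valued Hilbert--space stochastic integral and then to apply the Hilbert--space Burkholder--Davis--Gundy (BDG) inequality. Integration by parts yields, for any $\varphi\in\testfunctions{\R^d}$ and each $\tilde l$,
\begin{equation*}
\qv{\nu^{j,\tilde l}(r,\cdot)\partial_{z_j}\varphi,\eta_r^N}=-\qv{\partial_{z_j}\bigl(\nu^{j,\tilde l}(r,\cdot)\eta_r^N\bigr),\varphi},
\end{equation*}
so, in the spirit of Lemma~\ref{lemma: versions_of_stochastic_integrals}, the $H^{-\alpha}$-valued integrand of $\hat{\mathcal M}^N$ may be identified with
\begin{equation*}
\Psi_r^{\tilde l}:=-\sum_{j=1}^{d}\partial_{z_j}\bigl(\nu^{j,\tilde l}(r,\cdot)\eta_r^N\bigr).
\end{equation*}
Since $\alpha-1>d/2$, differentiation maps $H^{-\alpha+1}\to H^{-\alpha}$ boundedly, and by Assumption~\ref{ass: main} the coefficient $\nu^{j,\tilde l}$ lies in $B^{a}_{\infty,\infty}$ with $a>d+2$, which acts as a pointwise multiplier on $H^{-\alpha+1}(\R^d)$ via a standard paraproduct decomposition. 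Combining these two facts gives the Hilbert--Schmidt bound
\begin{equation*}
\|\Psi_r\|_{HS(\R^{\tilde m};\,H^{-\alpha})}^2\le C(d,\tilde m,\nu)\,\|\eta_r^N\|_{H^{-\alpha+1}}^2.
\end{equation*}

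The first inequality then follows at once from the Hilbert--space BDG inequality applied on $[s,t]$. For the second, applying BDG on $[0,T]$ gives
\begin{equation*}
\E\sup_{0\le t\le T}\|\hat{\mathcal M}_t^N\|_{H^{-\alpha}}^{2\theta}\le C(\theta)\,\E\bigg(\int_0^T\|\eta_r^N\|_{H^{-\alpha+1}}^2\Id r\bigg)^{\theta}.
\end{equation*}
Jensen's inequality (convex direction if $\theta\ge 1$, concave if $\theta<1$) brings the expectation inside the time integral, reducing matters to establishing the uniform moment estimate
\begin{equation*}
\sup_{0\le r\le T}\E\|\eta_r^N\|_{H^{-\alpha+1}}^{2\theta}\le C(\alpha,\theta)\bigg(\sup_{0\le t\le T}\E\,\mathcal H(\rho_t^N\,\vert\,\rho_t^{\otimes N})+1\bigg).
\end{equation*}
This is obtained by replicating the argument of Lemma~\ref{lemma: uniform_estimate}: Parseval's identity, the variational formula~\eqref{eq: var_formula}, and Jensen's inequality reduce the claim to an exponential moment of $N|\qv{\phi,\mu_t^N\otimes\mu_t^N}|^{\theta}$, which is controlled by Lemma~\ref{lemma: jabinwang_exponential} (for $\theta\le 1$) or Lemma~\ref{lemma: modification_elln}/Lemma~\ref{lemma: modification_super} (for higher $\theta$) with the appropriate choice of the small constant $\kappa$. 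The assumption $\alpha-1>d/2$ is exactly what guarantees $(1+|\xi|^2)^{-(\alpha-1)}\in L^1(\R^d)$, which is the input needed in the Jensen step.

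The main obstacle is verifying the multiplier estimate on the negative--order Bessel space $H^{-\alpha+1}$; here the comfortable gap $a-(\alpha-1)>d/2+1$ between the regularity of $\nu$ and the (absolute value of the) Sobolev index of $\eta^N$ makes the paraproduct estimate routine, so no additional structural assumption on $\nu$ is needed beyond those already imposed.
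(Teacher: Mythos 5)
Your proposal follows essentially the same route as the paper: identify the integrand of $\hat{\mathcal M}^N$ with $-\sum_j\partial_{z_j}(\nu^{j,\tilde l}\eta_r^N)$ as an $H^{-\alpha}(\R^d)$-valued process, bound it via the pointwise multiplier theorem (Theorem~\ref{theorem: holder}) by $C(\nu)\norm{\eta_r^N}_{H^{-\alpha+1}(\R^d)}$, apply the Hilbert-space Burkholder--Davis--Gundy inequality, and for the second estimate reduce to the uniform moment bound on $\eta^N$ in $H^{-\alpha+1}(\R^d)$ obtained from the relative entropy and the exponential law of large numbers as in Lemma~\ref{lemma: uniform_estimate}. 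The argument is correct and matches the paper's proof in all essential steps.
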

\begin{proof}
    By Lemma~\ref{lemma: uniform_estimate} we have 
    \begin{align} \label{eq: common_noise_tigthness_aux1}
    \begin{split}
        \sup\limits_{0 \le t \le T } \E\bigg( \norm{ \eta^N_t}_{H^{-\alpha'}}^2 \bigg) 
        &= N \sup\limits_{0 \le t \le T }  \E\bigg(\norm{ \mu_t^N -\rho_t}_{H^{-\alpha'}}^2 \bigg)  \\
        &\le C(\alpha) \sup\limits_{0 \le t \le T} \bigg(  \E\bigg(\mathcal{H}(\rho_t^N \vert \rho_t^{\otimes N} \bigg)  + 1 \bigg) . 
        \end{split}
    \end{align}
    for any \(d/2 < \alpha' < \alpha \). 
    Together with the assumption on the uniform bound of the relative entropy, we have the uniform bound on \(\eta_t^N\) in \(H^{-\alpha'}(\R^d)\).
    Consequently, utilizing the Burkholder-Davis-Gundy inequality for Banach space valued martingales~\cite[Theorem~1.1]{Marinelli2016}, Theorem~\ref{theorem: holder} and Assumption~\ref{ass: main} we obtain 
    \begin{align*}
       \E\bigg( \norm{\hat {\mathcal  M}_t^N -\hat{\mathcal M}_s^N}_{H^{-\alpha}(\R^d)}^{2\theta} \bigg) 
       &\le \sum\limits_{j=1}^d \sum\limits_{\tilde l}^{\tilde m} \E \bigg( \norm{\int\limits_s^t   \partial_{z_j}( \nu^{j,\tilde l}(s,\cdot) \eta_r^N ) \Id W_r^{\tilde l}}_{H^{-\alpha}(\R^d)}^{2\theta} \bigg)\\
        &\le \sum\limits_{j=1}^d \sum\limits_{\tilde l}^{\tilde m} \E \bigg( \bigg( \int\limits_s^t  \norm{  \partial_{z_j}( \nu^{j,\tilde l}(s,\cdot) \eta_r^N ) }_{H^{-\alpha}(\R^d)}^2 \Id r  \bigg)^{\theta} \bigg) \\
       &\le C(\nu) \E \bigg( \bigg( \int\limits_{s}^t \norm{\eta_r^{ N}}_{H^{-\alpha+1}(\R^d)}^{2}  \Id r \bigg)^{\theta} \bigg) 
    \end{align*}
Carrying out the same computation with \(s=0\) and the supremum inside the expectation we obtain the second inequality of the statement by applying inequality~\eqref{eq: common_noise_tigthness_aux1} at the end.  

\end{proof}
Next, let us improve the bound in Lemma~\ref{lemma: uniform_estimate} to include the supremum inside the expected value.  

\begin{lemma} \label{lemma: uniform_estimate_sup}
Let \(\alpha > d/2 +2\), then we have the bound 
    \begin{equation*}
        \E\big( \sup\limits_{0 \le t \le T} \norm{\eta_t^N}_{H^{-\alpha}(\R^d)}^4 \big) 
        \le  C(d,\sigma,\nu, \alpha, T). 
    \end{equation*}
\end{lemma}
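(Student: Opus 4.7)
The plan is to decompose $\eta^N$ according to the weak SPDE formulation~\eqref{eq: etaN_spde} into five pieces and control the expected fourth power of $\sup_t\|\cdot\|_{H^{-\alpha}}$ of each piece. Writing
\[
\eta_t^N = \eta_0^N + \mathcal D_t^{N,1} + \mathcal D_t^{N,2} + \mathcal M_t^N + \hat{\mathcal M}_t^N,
\]
with $\mathcal D_t^{N,1} := -\int_0^t \sqrt{N}\,\nabla\cdot(k*\mu_s^N\mu_s^N - k*\rho_s\rho_s)\,\Id s$ the interaction drift and $\mathcal D_t^{N,2} := \tfrac12\int_0^t\sum_{i,j}\partial_i\partial_j\big((\sigma_s\sigma_s^{\mathrm{T}}+\nu_s\nu_s^{\mathrm{T}})_{ij}\eta_s^N\big)\,\Id s$ the second-order drift, the triangle inequality combined with $(a_1+\cdots+a_5)^4\le C\sum_k a_k^4$ reduces the problem to bounding each of the five terms in $L^4(\Omega;L_t^\infty H^{-\alpha})$.

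The initial term satisfies $\E\|\eta_0^N\|_{H^{-\alpha}}^4\le C$ by Lemma~\ref{lemma: uniform_estimate} at $t=0$, since the particles are i.i.d.\ with density $\rho_0$ and so the initial relative entropy vanishes. The idiosyncratic and common-noise martingales are controlled by Lemma~\ref{lemma: tightness_martingale} and Lemma~\ref{lemma: tightness_common_noise} respectively with $\theta=2$; for the common-noise piece one feeds the bound $\sup_s \E\|\eta_s^N\|_{H^{-\alpha+1}}^4\le C$ obtained from Lemma~\ref{lemma: uniform_estimate} at index $\alpha-1>d/2$. For $\mathcal D^{N,2}$, H\"older's inequality in time combined with the smooth-coefficient multiplier bound $\|\partial_i\partial_j(a_{ij}\eta^N)\|_{H^{-\alpha}}\le C\|\eta^N\|_{H^{-\alpha+2}}$ (Assumption~\ref{ass: main}) gives $\E\sup_t\|\mathcal D_t^{N,2}\|_{H^{-\alpha}}^4\le CT^{3}\int_0^T\E\|\eta_s^N\|_{H^{-\alpha+2}}^4\,\Id s\le C$, and this is precisely where the hypothesis $\alpha>d/2+2$ is essential, so that Lemma~\ref{lemma: uniform_estimate} applies at index $\alpha-2>d/2$.

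The main obstacle is the interaction drift $\mathcal D^{N,1}$, since a direct application of Lemma~\ref{lemma: uniform_interaction} scaled by $\sqrt N$ yields $\E\|\sqrt N\,\nabla\cdot(k*\mu^N\mu^N-k*\rho\rho)\|_{H^{-\alpha}}^4\le CN$, which blows up with $N$. To circumvent this, I would use the algebraic identity, obtained by substituting $\mu^N=\rho+\eta^N/\sqrt N$,
\[
\sqrt N\big(k*\mu^N\mu^N - k*\rho\rho\big) = (k*\rho)\eta^N + (k*\eta^N)\rho + \tfrac{1}{\sqrt N}(k*\eta^N)\eta^N,
\]
and bound the three summands separately. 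The two linear-in-$\eta^N$ pieces, after $\nabla\cdot$ is absorbed into $H^{-\alpha+1}$, reduce to $C\|\eta_s^N\|_{H^{-\alpha+1}}$ via a Sobolev multiplier estimate; the smoothness of $\rho$ (from Assumption~\ref{ass: entropy} and Remark~\ref{remark: continuity_of_spde}) and of $k*\rho$ (through the Young-type bound $\|\partial^\beta(k*\rho)\|_{L^\infty}\le \|k\|_{L^2}\|\partial^\beta\rho\|_{L^2}$, exploiting $k\in L^2$) supplies the required multiplier norm, and H\"older in time together with Lemma~\ref{lemma: uniform_estimate} at $\alpha-1>d/2$ closes these two terms.

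The hard part is the nonlinear bilinear piece $\tfrac{1}{\sqrt N}\nabla\cdot((k*\eta^N)\eta^N)$, which I would handle on the Fourier side in the spirit of Lemma~\ref{lemma: uniform_interaction}. Parseval's identity combined with Jensen reduces its $H^{-\alpha}$-norm to an integral of $|\sqrt N\langle\Phi(\cdot,\cdot,\xi),(\mu_s^N-\rho_s)^{\otimes 2}\rangle|^4$ weighted by $(1+|\xi|^2)^{-\alpha}\,\Id\xi$, where $\Phi(x,y,\xi):=i\xi\cdot k(x-y)e^{-i\xi\cdot x}$ is uniformly bounded by $|\xi|\|k\|_{L^\infty}$. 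Subtracting the $\rho_s$-marginals from $\Phi$ restores the full single-variable cancellation property, so the stronger exponential-moment bound of Lemma~\ref{lemma: jabinwang_exponential} applies (rather than the Lemma~\ref{lemma: modification_super} used in Lemma~\ref{lemma: uniform_interaction}) and yields the sharper estimate $\E|\langle\Phi,(\mu_s^N-\rho_s)^{\otimes 2}\rangle|^4\le C|\xi|^4/N^4$. The $N^2$ prefactor is then compensated by $N^{-4}$, and the weight $|\xi|^4(1+|\xi|^2)^{-\alpha}$ is integrable in $\xi$ exactly when $\alpha>d/2+2$, giving $\sup_s\E\|\tfrac{1}{\sqrt N}\nabla\cdot((k*\eta_s^N)\eta_s^N)\|_{H^{-\alpha}}^4\le C/N^2$. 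A final H\"older in time delivers the desired bound on $\E\sup_t$, and the delicate step of the whole argument is precisely this nonlinear bilinear estimate.
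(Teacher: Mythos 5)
Your decomposition into initial datum, interaction drift, second-order drift, and the two martingale terms is exactly the paper's, and your treatment of the initial datum, of $\mathcal M^N$, $\hat{\mathcal M}^N$ and of the second-order drift coincides with the paper's (which likewise invokes Lemma~\ref{lemma: uniform_estimate}, Lemma~\ref{lemma: tightness_martingale}, Lemma~\ref{lemma: tightness_common_noise} and the multiplier Theorem~\ref{theorem: holder}). Where you diverge is the interaction drift: the paper sets $J^2_{0,t}=N\,\|\int_0^t\nabla\cdot(k*\mu_r^N\mu_r^N-k*\rho_r\rho_r)\Id r\|^4_{H^{-\alpha}}$ and applies Lemma~\ref{lemma: uniform_interaction} (fourth moment of rate $C/N$) so that the prefactor cancels, whereas you note that the drift in~\eqref{eq: etaN_spde} carries $\sqrt N$, hence its fourth power carries $N^2$, and that $N^2\cdot C/N$ diverges; you then propose the finer splitting into linear and bilinear pieces. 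The scaling observation is fair, but the repair you offer does not close.

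The gap is in the bilinear term. Lemma~\ref{lemma: jabinwang_exponential} controls $\int\rho^{\otimes N}\exp(N|\langle\varphi,\mu_N\otimes\mu_N\rangle|)$ for a doubly-cancelled bounded kernel; fed into the variational inequality~\eqref{eq: var_formula} this yields $\E_{\rho_t^N}|\langle\varphi,\mu_N\otimes\mu_N\rangle|\le C/N$, a \emph{first}-moment bound at rate $N^{-1}$, and since the functional is bounded the best fourth-moment bound it yields is $\E|\cdot|^4\le \|\varphi\|_{L^\infty}^3\,\E|\cdot|\le C/N$. Your claimed $\E|\langle\Phi,(\mu_s^N-\rho_s)^{\otimes 2}\rangle|^4\le C|\xi|^4/N^4$ would require an exponential moment of order $\exp(\kappa N^4|\cdot|^4)$ under $\rho^{\otimes N}$, which no lemma in the paper provides and which fails for a generic bounded kernel (the integrand can reach $e^{cN^4}$). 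With the honestly available rate $N^{-1}$ your bilinear piece contributes $N^2\cdot N^{-1}=N$ and still diverges; even the optimal CLT rate $N^{-2}$ would only just give $O(1)$, and $N^{-4}$ is out of reach of the relative-entropy machinery used here. Two secondary points: (i) your linear-in-$\eta^N$ terms produce products of $\|\rho_t\|_{H^{a-1}}$ with $\|\eta_t^N\|^4$, while Remark~\ref{remark: continuity_of_spde} only supplies $\E\sup_t\|\rho_t\|^2_{H^{a-1}}$ and $\rho$ is not independent of $\eta^N$, so this needs an argument (e.g.\ conditioning on $\cF^W$); (ii) for the initial datum, Lemma~\ref{lemma: uniform_estimate} gives $\E\|\mu_0^N-\rho_0\|^4_{H^{-\alpha}}\le C/N$ and hence only $\E\|\eta_0^N\|^4_{H^{-\alpha}}\le CN$ --- the vanishing of the initial relative entropy does not remove the additive constant in that lemma, so a direct i.i.d.\ fourth-moment computation is needed there instead.
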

\begin{proof}
    We decompose the norm as follows 
    \begin{equation*}
        \norm{\eta_t^N -\eta_s^N}_{H^{-\alpha}(\R^d)}^4 \le  \sum\limits_{j=1}^4 J^{j}_{s,t},
    \end{equation*}
    where 
    \begin{align*}
        J^1_{s,t} : &= \left\|  \int_s^t \frac{1}{2}  \sum\limits_{\alpha,\beta=1}^d \partial_{z_{\alpha}}\partial_{z_{\beta}} \bigg( ( [\sigma_r\sigma_r^{\mathrm{T}}]_{(\alpha,\beta)} + [\nu_r \nu_r^{\mathrm{T}}]_{(\alpha,\beta)} ) \eta_r^N \bigg) \Id r \right\|_{H^{-\alpha}(\R^d)}^4, \\
    J^2_{s,t} : &= N  \left\|  \int_s^t  \nabla \cdot ( k*\mu_r^N \mu_r^N -k *\rho_r \rho_r ) \Id r \right\|_{H^{-\alpha}(\R^d)}^4, \\
    J^3_{s,t} :& =  \norm{ \mathcal M_t^N - \mathcal M_s^N }_{H^{-\alpha}(\R^d)}^4 , \quad 
    J^4_{s,t} :  = \norm{\hat{\mathcal M}_t^N- \hat{\mathcal M}_s^N}_{H^{-\alpha}(\R^d)}^4
    \end{align*}
    Here we used the characterization of the negative Bessel potential \(H^{-\alpha }(\R^d) \) space as the dual space of the space \(H^\alpha(\R^d)\) and that the testfunctions \(\testfunctions{\R^d}\) are dense in \(H^\alpha\) and therefore norming and we can use the expansion~\eqref{eq: etaN_spde}. 
    At the moment, we only require the case \(s=0\). 
    Utilizing~\cite[Proposition~1.2.2.]{Neerven2016} and the Pointwise Multiplier Theorem~\ref{theorem: holder}, we obtain 
    \begin{align*}
        \E \big( \sup\limits_{0 \le t \le T} J_{0,t}^1 \big) 
        &\le T \E \bigg( \int\limits_0^T \norm{\sum\limits_{\alpha,\beta=1}^d \partial_{z_{\alpha}}\partial_{z_{\beta}} \bigg( ( [\sigma_t\sigma_t^{\mathrm{T}}]_{(\alpha,\beta)} + [\nu_t \nu_t^{\mathrm{T}}]_{(\alpha,\beta)} ) \eta_t^N \bigg) }_{H^{-\alpha}(\R^d)}^4 \Id t  \bigg) \\
   &\le C(d) T \sum\limits_{\alpha,\beta=1}^d   \sup\limits_{0 \le t \le T}  \E \big( \norm{ ( [\sigma_t\sigma_t^{\mathrm{T}}]_{(\alpha,\beta)} + [\nu_t \nu_t^{\mathrm{T}}]_{(\alpha,\beta)} ) \eta_t^N }_{H^{-\alpha+2}(\R^d)}^4 \Id t \big) \\
   &\le C(d) T  \sup\limits_{0 \le t \le T} \norm{[\sigma_t\sigma_t^{\mathrm{T}}]_{(\alpha,\beta)} + [\nu_t \nu_t^{\mathrm{T}}]_{(\alpha,\beta)}}_{B_{\infty, \infty}^{a}(\R^d)}^{4}  \E\bigg( \norm{\eta_t^N }_{H^{-\alpha+2}(\R^d)}^{4}\bigg) \\
   &\le C(d, \sigma,\nu,\alpha) T  \sup\limits_{0 \le t \le T} \bigg(  \E\bigg(\mathcal{H}(\rho_t^N \vert \rho_t^{\otimes N} \bigg)  + 1 \bigg) 
    \end{align*}
     For the term \(J^2_{0,t}\) we apply H\"older's inequality to obtain 
 \begin{align*}
     \E(J_{0,t}^2) &\le T^4 N \sup\limits_{0 \le t \le T} \E\bigg(  \norm{\nabla \cdot ( k*\mu_t^N \mu_t^N -k *\rho_t \rho_t )}^4_{H^{-\alpha}(\R^d)}  \bigg) \\ 
     &\le T^2  C \big(\norm{k}_{L^\infty(\R^d)},\alpha \big)  \bigg( \E\bigg( \sup\limits_{0 \le t \le T} \mathcal{H}(\rho_t^N \vert \rho_t^{\otimes N} \bigg) +1 \bigg),
 \end{align*}
 which, through the bound on the relative entropy, establishes the desired bound
  \begin{equation*}
     \E \big( \sup\limits_{0\le t \le T} \norm{\eta_t^N}_{H^{-\alpha}(\R^d)}^{4} \big) \le \E \big((\norm{\eta_0^N}_{H^{-\alpha}(\R^d)}^4\big) + C(d,\sigma,\nu, \alpha, T) 
     \le C(d,\sigma,\nu, \alpha, T),
 \end{equation*}
 where we applied Lemma~\ref{lemma: uniform_estimate} to estimate the fluctuation process and initial time.
 
 The bounds for \(J_{0,t}^3\) and \(J_{0,t}^4\) follow immediately by Lemma~\ref{lemma: tightness_martingale} and Lemma~\ref{lemma: tightness_common_noise}. 
 Putting all estimates together, the claim follows. 
\end{proof}

\begin{lemma} \label{lemma: tighntess_fluctuation_sequence}
Let \(\alpha > d/2 +2 \). Then, the sequence of fluctuation processes \((\eta^N, N \in \N)\) is tight in the space \(C([0,T]; H^{-\alpha}(\R^d))\). 
\end{lemma}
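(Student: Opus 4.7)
The plan is to verify the two ingredients of Kolmogorov's tightness criterion \cite[Theorem~23.7]{Kallenberg2021} in the Hilbert space $H^{-\alpha}(\R^d)$: (i) tightness of the initial data $(\eta^N(0))_{N \in \N}$ in $H^{-\alpha}(\R^d)$, and (ii) a uniform Kolmogorov-type modulus of continuity bound
\begin{equation*}
\sup_{N \in \N} \E\bigl\|\eta_t^N - \eta_s^N\bigr\|_{H^{-\alpha}(\R^d)}^{p} \leq C|t-s|^{1+\beta}, \qquad 0 \leq s < t \leq T,
\end{equation*}
for some $p, \beta > 0$ independent of $N$. Ingredient~(i) is immediate from Assumption~(A1): since $(\eta^N(0))_{N \in \N}$ converges in law in $H^{-\alpha}(\R^d)$, Prokhorov's theorem supplies tightness.

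For ingredient~(ii) I would follow the strategy already used in the proof of Lemma~\ref{lemma: uniform_estimate_sup}, but applied on a general interval $[s,t]$ rather than $[0,T]$. Integrating the SPDE~\eqref{eq: etaN_spde} from $s$ to $t$ and taking the fourth power of the $H^{-\alpha}(\R^d)$-norm, I split
\begin{equation*}
\bigl\|\eta_t^N - \eta_s^N\bigr\|_{H^{-\alpha}(\R^d)}^{4} \leq C\bigl(J^{1}_{s,t} + J^{2}_{s,t} + J^{3}_{s,t} + J^{4}_{s,t}\bigr),
\end{equation*}
with the same notation as in Lemma~\ref{lemma: uniform_estimate_sup}. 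Each piece is treated with tools already developed. For the parabolic piece $J^{1}_{s,t}$, Hölder's inequality in time combined with the Pointwise Multiplier Theorem and the uniform bound on $\sup_r \E\|\eta_r^N\|_{H^{-\alpha+2}(\R^d)}^{4}$ (which follows from Lemma~\ref{lemma: uniform_estimate_sup}) yields $\E J^{1}_{s,t} \leq C|t-s|^{4}$. The interaction piece $J^{2}_{s,t}$ is handled by Hölder in time together with the per-time estimate of Lemma~\ref{lemma: uniform_interaction}, giving $\E J^{2}_{s,t} \leq C|t-s|^{4}$. The martingale pieces are produced directly from Lemma~\ref{lemma: tightness_martingale} and Lemma~\ref{lemma: tightness_common_noise} with $\theta = 2$, combined with Hölder in time in the common-noise case in order to extract the uniform moment bound on $\E\|\eta_r^N\|_{H^{-\alpha+1}(\R^d)}^{4}$; this produces $\E J^{3,4}_{s,t} \leq C|t-s|^{2}$. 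Summing the four contributions delivers the Kolmogorov bound with $p = 4$ and $\beta = 1$.

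The principal subtlety, already flagged in Remark~\ref{remark: compactness_ness}, is that on the unbounded domain $\R^d$ the space $H^{-\alpha}(\R^d)$ does not admit a compact embedding into a weaker Sobolev space, so a uniform modulus of continuity is not by itself enough to secure pointwise tightness in $t$. It is precisely here that the stronger hypothesis~(A1) plays a decisive role: combined with the Kolmogorov increment estimate above, it supplies tightness of $(\eta^N)_{N \in \N}$ in $C([0,T]; H^{-\alpha}(\R^d))$ through~\cite[Theorem~23.7]{Kallenberg2021}. This interplay between the modulus estimate and the imposed initial tightness is, in my view, the main conceptual obstacle that distinguishes the present setting from the compact-domain analysis of~\cite{Xianliang2023}.
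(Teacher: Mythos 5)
Your plan is correct and follows essentially the same route as the paper: the same decomposition $\|\eta_t^N-\eta_s^N\|_{H^{-\alpha}}^4\le\sum_{j=1}^4 J^j_{s,t}$ from Lemma~\ref{lemma: uniform_estimate_sup}, with $J^1$ handled by the multiplier theorem and the uniform moment bound, $J^2$ by Lemma~\ref{lemma: uniform_interaction}, and $J^{3},J^{4}$ by Lemmas~\ref{lemma: tightness_martingale} and~\ref{lemma: tightness_common_noise}, concluding via Kolmogorov's criterion. You also correctly pin the tightness of the initial data on Assumption~(A1) via Prokhorov, which is indeed the ingredient the paper's concluding sentence is relying on.
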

\begin{proof}
     We rely on the decomposition given by Lemma~\ref{lemma: uniform_estimate_sup}
     \begin{equation*}
        \norm{\eta_t^N -\eta_s^N}_{H^{-\alpha}(\R^d)}^4 \le  \sum\limits_{j=1}^4 J^{j}_{s,t}.  
    \end{equation*}
Since the computations are similar to Lemma~\ref{lemma: uniform_estimate_sup} we skip some steps. 
We apply~\cite[Proposition~1.2.2.]{Neerven2016} to obtain 
\begin{align*}
   \E( J_{s,t}^1 ) &\le |t-s|^4 \E \bigg( \sup\limits_{0 \le t \le T} \norm{\sum\limits_{\alpha,\beta=1}^d \partial_{z_{\alpha}}\partial_{z_{\beta}} \bigg( ( [\sigma_t\sigma_t^{\mathrm{T}}]_{(\alpha,\beta)} + [\nu_t \nu_t^{\mathrm{T}}]_{(\alpha,\beta)} ) \eta_t^N \bigg) }_{H^{-\alpha}(\R^d)}^4  \bigg) \\
   &\le C(d)|t-s|^4 \sup\limits_{0 \le t \le T} (\norm{[\sigma_t\sigma_t^{\mathrm{T}}]_{(\alpha,\beta)} + [\nu_t \nu_t^{\mathrm{T}}]_{(\alpha,\beta)}}_{\mathcal B^{a}_{\infty, \infty}(\R^d)}\big)  E\big(\sup\limits_{0 \le t \le T}  \norm{\eta_t^N }_{H^{-\alpha+2}(\R^d)}^{4} \big) \\
   &\le C(d, \sigma,\nu,\alpha,T)|t-s|^2          
\end{align*}
where we applied Lemma~\ref{lemma: uniform_estimate_sup} and Assumption~\ref{ass: main} in the last step.  
 For the term \(J^2_{s,t}\) we use Lemma~\ref{lemma: uniform_interaction} we find  
 \begin{align*}
     \E(J_{s,t}^2) &\le |t-s|^3 \E\bigg( \int\limits_{s}^t  \norm{\nabla \cdot ( k*\mu_r^N \mu_r^N -k *\rho_r \rho_r )}^4_{H^{-\alpha}(\R^d)} \Id r  \bigg) \\ 
      &\le |t-s|^4 \sup\limits_{0 \le t \le T} \E\bigg(  \norm{\nabla \cdot ( k*\mu_t^N \mu_t^N -k *\rho_t \rho_t )}^{ 4}_{H^{-\alpha}(\R^d)}   \bigg) \\ 
     &\le C\big(\norm{k}_{L^\infty(\R^d)},\alpha,T \big) |t-s|^2  \sup\limits_{0 \le t \le T} \bigg( \E\bigg( \mathcal{H}(\rho_t^N \vert \rho_t^{\otimes N} \bigg) +1 \bigg),
 \end{align*}
 which, through the bound on the relative entropy, establishes the desired condition.
 For \(J_{s,t}^3\) we refer to the inequality Lemma~\ref{lemma: tightness_martingale}. 
 For the common noise term \(J_{s,t}^4\) we use Lemma~\ref{lemma: uniform_estimate_sup} with Lemma~\ref{lemma: tightness_common_noise}. 
Plugging the estimate of Lemma~\ref{lemma: uniform_estimate_sup} in the first inequality in Lemma~\ref{lemma: tightness_common_noise} we derive 
 \begin{equation*}
     \E(J^4_{s,t}) \le  C(d,\tilde m,\nu)  \E \bigg(\bigg(\int\limits_{s}^t \norm{\eta_r}_{H^{-\alpha+1}(\R^d)}^{2}  \Id r \bigg)^{2} \bigg)
     \le  C(d,\tilde m,\nu)  \E \big( \sup\limits_{0\le t \le T} \norm{\eta_t^N}_{H^{-\alpha}(\R^d)}^4 \big) |t-s|^2. 
 \end{equation*} 
 Consequently, we find 
\begin{align*}
     \E \big( \norm{\eta_t^N -\eta_s^N}_{H^{-\alpha}(\R^d)}^{4} \big) 
     &\le \sum\limits_{j=1}^4 \E ( J^j_{s,t} ) \\
     &\le C(d,\sigma,\nu,\tilde m,m, \alpha, T) |t-s|^2  . 
\end{align*}

 Together with Assumption~\ref{ass: main} and Kolmogorov's tightness criterion~\cite[Theorem~23.7.]{Kallenberg2021} we obtain the tightness of \((\eta_t^N, N \in \N) \) in \(C([0,T],  H^{-\alpha}(\R^d)  )\). 
\end{proof}

Let us introduce the spaces
\begin{align}\label{eq: spaces}
\begin{split}
\mathcal W &:= C\big( [0,T]; \R \big),\\
\mathcal X &:=  \bigcap\limits_{m \in \N} C\big([0,T], H^{-\frac{d}{2}-2-\frac{1}{m} } (\R^d) \big)  \cap L^2\big([0,T], H^{-\frac{d}{2}-\frac{1}{m} }(\R^d) \big) ,  \\
\mathcal Y &:= \bigcap\limits_{m \in \N} C\big([0,T], H^{-\frac{d}{2}-2-\frac{1}{m} } (\R^d) \big)  , \\
\mathcal Z &:= C([0,T],H^{a-1}(\R^d)). 
\end{split}
\end{align}
Here \(\mathcal W\) correspond to the space of the common noise \(W\), \(\mathcal X\) is the space for the fluctuation process \(\eta^N\), \(\mathcal Y\) is the space for the martingale part and \(\mathcal Z\) is the space for the solution of the SPDE \(\rho\),

We demonstrate that the quadruple \(((W,\eta^N,\mathcal M^N, \rho), N \in \N)\) is tight following~\cite[Theorem~3.5]{Xianliang2023}.

\begin{lemma}
The law of \(((W,\eta^N,\mathcal M^N, \rho), N \in \N)\) is tight in \(\mathcal W \times \mathcal X \times \mathcal Y \times \mathcal Z \). 

\end{lemma}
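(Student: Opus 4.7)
The plan is to verify tightness of each marginal separately, using the fact that in a product of Polish spaces, joint tightness of a sequence is equivalent to tightness of each marginal. The marginal laws of $W$ (a fixed Brownian motion) and $\rho$ (a fixed solution, which lies in $\mathcal Z$ by Remark~\ref{remark: continuity_of_spde}) are single probability measures on Polish spaces, hence automatically tight by Ulam's theorem. What requires work is tightness of $(\eta^N)$ in $\mathcal X$ and tightness of $(\mathcal M^N)$ in $\mathcal Y$.

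For the martingale sequence $(\mathcal M^N)$ in $\mathcal Y$, I would observe that $\mathcal Y$ is a Polish space (equipped with the sum of the metrics of $C([0,T], H^{-d/2-2-1/m}(\R^d))$ over $m \in \N$), so tightness in $\mathcal Y$ follows from tightness in each factor $C([0,T], H^{-d/2-2-1/m}(\R^d))$. This is an immediate consequence of Lemma~\ref{lemma: tightness_martingale} applied with $\alpha = d/2 + 2 + 1/m > d/2 + 1$ (so the strong measurability condition is satisfied), using the moment bound with $\theta > 1$ and Kolmogorov's tightness criterion.

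The main obstacle is tightness of $(\eta^N)$ in $\mathcal X$, since $\mathcal X$ contains the $L^2([0,T], H^{-d/2-1/m}(\R^d))$ components, whereas Lemma~\ref{lemma: tighntess_fluctuation_sequence} only gives tightness in $C([0,T], H^{-\alpha}(\R^d))$ for $\alpha > d/2 + 2$. The tightness in the continuous factors of $\mathcal X$ is again immediate from Lemma~\ref{lemma: tighntess_fluctuation_sequence} with $\alpha = d/2+2+1/m$. For the $L^2$ factors, the idea is to combine this with the moment bounds from Lemma~\ref{lemma: uniform_estimate} via an interpolation argument. Specifically, fix $m \in \N$ and set $\alpha_0 := d/2 + 1/(2m)$, $\alpha := d/2 + 1/m$, $\alpha_1 := d/2+2+1/m$, so that $\alpha_0 < \alpha < \alpha_1$ and there exists $\theta \in (0,1)$ with $\alpha = (1-\theta)\alpha_0 + \theta \alpha_1$. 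The interpolation inequality then yields the pointwise bound $\|f(t)\|_{H^{-\alpha}} \le \|f(t)\|_{H^{-\alpha_0}}^{1-\theta}\|f(t)\|_{H^{-\alpha_1}}^{\theta}$, and Hölder's inequality gives
\begin{equation*}
\|f\|_{L^2([0,T],H^{-\alpha})}^{2} \le \|f\|_{C([0,T],H^{-\alpha_1})}^{2\theta}\, T^{\theta}\, \|f\|_{L^2([0,T],H^{-\alpha_0})}^{2(1-\theta)}.
\end{equation*}

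Given $\varepsilon > 0$, Lemma~\ref{lemma: tighntess_fluctuation_sequence} provides a compact set $K \subset C([0,T], H^{-\alpha_1}(\R^d))$ with $\P(\eta^N \in K) \ge 1 - \varepsilon/2$ for all $N$. Lemma~\ref{lemma: uniform_estimate} together with Chebyshev's inequality yields an $M > 0$ such that the ball $B_M := \{f : \|f\|_{L^2([0,T], H^{-\alpha_0})} \le M\}$ satisfies $\P(\eta^N \in B_M) \ge 1-\varepsilon/2$. I would then argue that $K \cap B_M$ is relatively compact in $L^2([0,T], H^{-\alpha}(\R^d))$: taking a sequence in $K \cap B_M$, extract a subsequence converging in $C([0,T], H^{-\alpha_1})$ by compactness of $K$, identify the limit via weak $L^2(H^{-\alpha_0})$-compactness of the closed ball $B_M$ (using reflexivity), and apply the interpolation inequality displayed above to the differences to conclude $L^2(H^{-\alpha})$-convergence. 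This furnishes tightness in $L^2([0,T], H^{-d/2-1/m})$ for each $m \in \N$, and combining the marginal tightnesses over the countable intersection defining $\mathcal X$ completes the proof.
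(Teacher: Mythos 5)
Your proposal is correct and follows essentially the same route as the paper: reduce to marginal tightness, dispose of $W$ and $\rho$ by Ulam's theorem, handle $\mathcal M^N$ via Lemma~\ref{lemma: tightness_martingale}, and obtain tightness of $\eta^N$ in the $L^2$ factors by intersecting a compact set from Lemma~\ref{lemma: tighntess_fluctuation_sequence} with a Chebyshev ball at the intermediate regularity $H^{-d/2-1/(2m)}$ and interpolating. The only cosmetic difference is that the paper concludes relative compactness in $L^2([0,T];H^{-d/2-1/m})$ directly from a Cauchy-sequence argument via the interpolation inequality, without invoking weak compactness of the ball.
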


\begin{proof}
    It is enough to demonstrate that each process is tight in their corresponding Polish space. Then, the vector of process is tight by the fact that the cartesian product of compact sets is compact under the product topology. The fact that \((\mathcal M^N, N \in \N)\) is tight follows by Lemma~\ref{lemma: tightness_martingale}. 
    Indeed, similar to~\cite[Theorem 3.5]{Xianliang2023} it is enough to demonstrate that \((\mathcal M^N, N \in \N)\) is tight in \( C\big([0,T], H^{-\frac{d}{2}-2-\frac{1}{m} } \big)\) for all \(m \in \N\), which follows by Lemma~\ref{lemma: tightness_martingale}. By the same argument we can reduce the analysis of tightness for \((\eta^N, N \in \N)\) in \(\mathcal X\) to the space \( C\big([0,T], H^{-\frac{d}{2}-2-\frac{1}{m} } \big)  \cap L^2\big([0,T], H^{-\frac{d}{2}-2-\frac{1}{m} } \big) \) for big \(m \in \N\). 
    Next, by Lemma~\ref{lemma: tighntess_fluctuation_sequence} and Prokhorov's theorem there exists for every \(\epsilon\) a relative compact set \(K_\epsilon\) such that 
    \begin{equation*}
        \P(\eta^N \not \in K_\epsilon) \le \frac{\epsilon}{2}.
    \end{equation*}
    We define the following set 
    \begin{equation*}
        A_\epsilon:= \Bigl\{ u \in K_\epsilon \colon \int\limits_0^T \norm{u_t}^2_{H^{- \frac{1}{2}(d+\frac{1}{m})}} \Id t   \le M_\epsilon \Bigr\}. 
    \end{equation*}
    Let \((u_n, n \in \N)\) be a sequence in \(A_\epsilon\), then \((u_n, n \in \N)\) is a sequence in \(K_\epsilon\) and, hence, there exists a converging subsequence in \(C\big([0,T], H^{-\frac{d}{2}-2-\frac{1}{m} }\big) \), which we do not rename. 
    Applying~\cite[Proposition 1.52]{Bahouri2011} with some interpolation constant \(\theta\) and \(-\frac{d}{2}-2-\frac{1}{m}<-\frac{d}{2}-\frac{1}{m} < -\frac{1}{2}(d+\frac{1}{m})\) we obtain 
    \begin{align*}
    &\int_0^T \| u_n(t) - u_{n'}(t) \|^2_{H^{-\frac{d}{2}-\frac{1}{m}}(\R^d)} dt \\
    &\quad \leq \left( \int_0^T \| u_n(t) - u_{n'}(t) \|^{2\theta}_{H^{-\frac{1}{2}(d+\frac{1}{m}) }(\R^d)}
    \| u_n(t) - u_{n'}(t) \|^{2(1-\theta)}_{H^{-\frac{d}{2}-2-\frac{1}{m}}(\R^d)} dt \right) \\
    &\quad \leq \left( \int_0^T \| u_n(t) - u_{n'}(t) \|^2_{H^{-\frac{1}{2}(d+\frac{1}{m})}(\R^d)} dt \right)^\theta
    \left( \int_0^T \| u_n(t) - u_{n'}(t) \|^2_{H^{-\frac{d}{2}-2-\frac{1}{m}}(\R^d)} dt \right)^{1-\theta} \\
    &\quad \leq \left( \int_0^T \| u_n(t) - u_{n'}(t) \|^2_{H^{-\frac{1}{2}(d+\frac{1}{m})}(\R^d)} dt \right)^\theta
    \left( T \sup_{t \in [0,T]} \| u_n(t) - u_{n'}(t) \|^2_{H^{-\frac{d}{2}-2-\frac{1}{m}}(\R^d)} \right)^{1-\theta}.
    \end{align*}
The almost sure convergence of \((u_n, n \in \N)\) in \(C\big([0,T], H^{-\frac{d}{2}-2-\frac{1}{m} \big) }\) and the bound provided by the set \(A_\epsilon\) shows that \((u_n, n \in \N)\) is a Cauchy sequence and therefore has convergent subsequence in \(L^2\big([0,T], H^{-\frac{d}{2}-\frac{1}{m} } \big)\). Hence, \(A_\epsilon\) is relative compact in the space \( C\big([0,T], H^{-\frac{d}{2}-2-\frac{1}{m} } \big)  \cap L^2\big([0,T], H^{-\frac{d}{2}-\frac{1}{m} } \big) \) and an application of Chebyshev's inequality yields 
\begin{align*}
    \P(\eta^N \not \in A_\epsilon) \le  \P(\eta^N \not \in K_\epsilon) 
    + \frac{T}{M_\epsilon} \sup\limits_{N \in \N} \sup\limits_{0 \le t \le T }  \E\big(  \norm{\eta_t^N}_{H^{-\frac{1}{2}(d+\frac{1}{m})}(\R^d)}^4 \big)  ,
\end{align*}
which can be made smaller than \(\epsilon\) by choosing \(M_\epsilon\) big enough, since the relative entropy is bounded by Lemma~\ref{lemma: uniform_estimate} and the fact that \(-\tfrac{1}{2}(d+\tfrac{1}{m}) < -\tfrac{d}{2}\). This proves that \((\eta^N, N \in \N)\) is tight in \(\mathcal{X}\). 
    Additionally, the laws of \(W\) and \(\rho\) are tight, since any probability measure on a Polish space is tight. Combining everything proves the claim. 
\end{proof}

A standard consequence of Skorokhod's representation theorem is the following proposition. 
\begin{proposition}\label{prop: skorohod}
There exists a subsequence of \( ((W,\eta^N, \mathcal M^N, \rho), N \in \N) \), which we still denoted by \( ((W,\eta^N, \mathcal M^N, \rho), N \in \N) \) for simplicity, and a new filtered probability space \( (\tilde{\Omega}, \tilde{\cF},  \tilde{\P}) \) with  \(\mathcal W \times \mathcal X \times \mathcal Y \times \mathcal Z \)-valued random variables \( ((\tilde W^N, \tilde{\eta}^N, \tilde{ \mathcal M}^N, \tilde{\rho}^N ),N \in \N)\) and \( (\tilde W, \tilde{\eta}, \tilde{ \mathcal M}, \tilde{\rho} )\) such that:
\begin{enumerate}
\item The sequence of   \( ((\tilde W^N, \tilde{\eta}^N, \tilde{\mathcal M}^N, \tilde{\rho}^N ),N \in \N)\) converges to \( (\tilde{W}, \tilde{\eta}, \tilde{\mathcal M}, \tilde{\rho}) \) in \(\mathcal W \times \mathcal X \times \mathcal Y \times \mathcal Z \) \( \tilde{P} \)-a.s.
    \item For each \( N \in \mathbb{N} \), the law of the \( (\tilde W^N, \tilde{\eta}^N, \tilde{ \mathcal M}^N, \tilde{\rho}^N )\) on \( (\tilde{\Omega}, \tilde{\cF},  \tilde{\P}) \) coincides with the law of \( (W,\eta^N,\mathcal  M^N, \rho) \) on \( (\Omega, \cF, P) \).
\end{enumerate}
\end{proposition}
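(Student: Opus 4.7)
The plan is to reduce the statement to a direct application of Prokhorov's theorem followed by Skorokhod's representation theorem; the crucial analytic input, namely the joint tightness of $((W,\eta^N,\mathcal M^N,\rho), N \in \N)$ in the product space $\mathcal W \times \mathcal X \times \mathcal Y \times \mathcal Z$, has just been established in the preceding lemma, so nothing further from the PDE or particle-system side is needed.

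The first step will be to confirm that the ambient space is Polish, so that the classical separable version of Skorokhod's theorem applies. The factor $\mathcal W = C([0,T];\R)$ is trivially Polish, and $\mathcal Z = C([0,T]; H^{a-1}(\R^d))$ is Polish because $H^{a-1}(\R^d)$ is a separable Hilbert space. The spaces $\mathcal X$ and $\mathcal Y$ are countable intersections of Polish spaces, which I would endow with the standard Fréchet metric
\[
d(u,v) := \sum_{m=1}^{\infty} 2^{-m}\frac{\|u-v\|_m}{1+\|u-v\|_m},
\]
where $\|\cdot\|_m$ denotes the relevant norm at level $m \in \N$ (for $\mathcal X$ one combines the $C$- and $L^2$-contributions at each level additively). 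Since a countable projective limit of Polish spaces is Polish, the full product $\mathcal W \times \mathcal X \times \mathcal Y \times \mathcal Z$ inherits a Polish structure, and weak convergence on this product is equivalent to joint weak convergence with respect to each factor of the intersection.

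The second step is the extraction of a weakly convergent subsequence. By the tightness result of the preceding lemma and Prokhorov's theorem, there exists a subsequence, which I will still index by $N$ for notational simplicity, along which the joint laws of $(W,\eta^N,\mathcal M^N,\rho)$ converge weakly on $\mathcal W \times \mathcal X \times \mathcal Y \times \mathcal Z$ to some limit law $\pi$. An application of the classical Skorokhod representation theorem (e.g.\ Kallenberg, Theorem~5.31) on this Polish space then yields a probability space $(\tilde\Omega,\tilde{\mathcal F},\tilde\P)$ carrying versions $(\tilde W^N,\tilde\eta^N,\tilde M^N,\tilde\rho^N)$ with the same joint law as the originals, together with a limit variable $(\tilde W,\tilde\eta,\tilde M,\tilde\rho)$ of law $\pi$, such that the convergence stated in item~(i) holds $\tilde\P$-almost surely. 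Assertion~(ii) is built into the Skorokhod construction itself, since the versions are produced precisely so as to preserve the marginal laws.

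The only non-routine point in the argument is the topological verification that $\mathcal X$ and $\mathcal Y$ carry a Polish structure compatible with the tightness criterion used in the preceding lemma; this is a purely soft consideration and no further estimate is required. Should one wish to avoid the Fréchet-metric construction altogether, Jakubowski's generalisation of Skorokhod's theorem to quasi-Polish spaces could be invoked instead, but this refinement is not needed here.
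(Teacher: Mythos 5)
Your proposal is correct and follows essentially the same route as the paper, which simply presents the proposition as a standard consequence of tightness plus Prokhorov and Skorokhod's representation theorem; your additional remark that $\mathcal X$ and $\mathcal Y$ must be metrized as Fr\'echet-type countable intersections of Polish spaces is the right (and only) soft point to check.
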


We need to define a new filtration on the new space \( (\tilde{\Omega}, \tilde{\cF}, \tilde{\P}) \). 
For each \(N \in \N\) we define the new filtration 
\begin{equation*}
    \tilde \cG_t^N = \sigma ( (\tilde W^N_t, \tilde{\eta}^N_t, \tilde{ \mathcal M}^N_t, \tilde{\rho}^N_t )) 
\end{equation*}
and let \(\tilde {\mathcal N}\) be the set of \(\tilde \P \)-null sets in \(\tilde \cF\). Then, set 
\begin{equation*}
    \tilde \cF^N_t = \bigcap\limits_{u > t} \sigma ( \tilde \cG_u^N \cup 
 \tilde {\mathcal N} ) .
\end{equation*}
By the measurability of all processes, we have \(\tilde \cF_t^N \subseteq \tilde \cF\) and therefore it is a filtration. Similar, we define the filtrations \(\tilde \cG\) and \(\tilde \cF\), which correspond to the limiting processes \( (\tilde{W}, \tilde{\eta}, \tilde{ \mathcal M}, \tilde{\rho}) \). 
The question, which arises is \(\tilde W^N\) is still a Brownian motions with respect to \((\tilde \cF_t^N, t \ge 0)\) and \(\tilde W\) is a Brownian motion with respect to \((\tilde \cF_t, t \ge 0)\). This the statement of the next lemma. 
\begin{lemma} \label{lemma: brownian_motion}
    For each \(N\in \N\) the processes \((\tilde W_t^N, t \ge 0)\) and \((\tilde W_t, t \ge 0)\) are Brownian motions on \( (\tilde{\Omega}, \tilde{\cF}, \tilde{\P}) \) with respect to the filtrations \((\tilde \cF_t^N, t \ge 0)\) and \((\tilde \cF_t, t \ge 0)\), respectively. 
\end{lemma}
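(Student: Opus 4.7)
The plan is to verify the three defining properties of a $\tilde m$-dimensional Brownian motion---continuity of paths, starting at zero, and independent $N(0,(t-s)I)$-distributed increments with respect to the prescribed filtration. Path continuity and the initial condition $\tilde W^N_0=0$ (resp.\ $\tilde W_0=0$) follow at once from the equality of joint laws and the $\tilde\P$-a.s.\ convergence in $\mathcal W=C([0,T];\R^{\tilde m})$ provided by Proposition~\ref{prop: skorohod}. The only non-trivial step is independence of increments, which I will encode as a characteristic-function identity that depends only on the joint law.

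For fixed $0\le s<t$, $\xi\in\R^{\tilde m}$, and a bounded continuous functional $\Psi$ on the restricted path space $\mathcal W_s\times \mathcal X_s\times \mathcal Y_s\times \mathcal Z_s$ (where the subscript denotes restriction of paths to $[0,s]$), the fact that $W$ is an $(\cF_t)$-Brownian motion on the original space and that $(W,\eta^N,M^N,\rho)$ are all $(\cF_t)$-adapted gives
\begin{equation*}
\E\!\left[e^{i\xi\cdot(W_t-W_s)}\,\Psi\!\left(W_{|[0,s]},\eta^N_{|[0,s]},M^N_{|[0,s]},\rho_{|[0,s]}\right)\right]=e^{-|\xi|^2(t-s)/2}\,\E[\Psi(\cdots)].
\end{equation*}
By Proposition~\ref{prop: skorohod}(2), the same identity transfers verbatim to $(\tilde W^N,\tilde\eta^N,\tilde M^N,\tilde\rho^N)$ on $(\tilde\Omega,\tilde\cF,\tilde\P)$. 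Since the path spaces $\mathcal W,\mathcal X,\mathcal Y,\mathcal Z$ are Polish and the restriction maps $\pi_s$ are continuous, a monotone-class argument allows me to replace bounded continuous $\Psi$ by indicators of arbitrary events in the natural $\sigma$-algebra generated by the four processes up to time $s$, which is precisely $\tilde \cG^N_s$ (interpreting the excerpt's definition in the standard way). This yields independence of $\tilde W^N_t-\tilde W^N_s$ from $\tilde \cG^N_s$ together with its $N(0,(t-s)I)$-law, hence the Brownian-motion property w.r.t.\ $(\tilde \cG^N_t)$.

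For the limiting process $\tilde W$, I pass to the limit in the characteristic-function identity. Proposition~\ref{prop: skorohod}(1) furnishes $(\tilde W^N,\tilde\eta^N,\tilde M^N,\tilde\rho^N)\to(\tilde W,\tilde\eta,\tilde M,\tilde\rho)$ $\tilde\P$-a.s.\ in $\mathcal W\times\mathcal X\times\mathcal Y\times\mathcal Z$, so in particular $\tilde W^N_t-\tilde W^N_s\to \tilde W_t-\tilde W_s$ a.s.\ and $\pi_s$ pushes the a.s.\ convergence onto the restricted path spaces. Since $\Psi$ and $v\mapsto e^{i\xi\cdot v}$ are bounded continuous, dominated convergence produces the same identity for $\tilde W$, which by the same monotone-class step proves the Brownian-motion property w.r.t.\ $(\tilde \cG_t)$.

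The remaining task is the passage from the natural filtrations $(\tilde \cG^N_t),(\tilde \cG_t)$ to the augmented right-continuous hulls $(\tilde \cF^N_t),(\tilde \cF_t)$. This is by now a classical enlargement argument: augmentation by $\tilde\P$-null sets preserves independence of a fixed increment from a sub-$\sigma$-algebra, and right-continuity is recovered by letting $\varepsilon\downarrow 0$ in the identity written on $\tilde \cG^N_{s+\varepsilon}$, using path continuity and dominated convergence, which is precisely the mechanism underlying the Blumenthal $0$-$1$ law. The point that I anticipate requires the most care---and which is my candidate for the main obstacle---is the monotone-class reduction identifying $\tilde \cG^N_s$ (resp.\ $\tilde \cG_s$) with the $\sigma$-algebra generated by the functionals $\Psi\circ\pi_s$: this depends on the fact that $\mathcal W,\mathcal X,\mathcal Y,\mathcal Z$ are Polish spaces, that their Borel $\sigma$-algebras coincide with the cylindrical ones, and that the evaluation/restriction maps are Borel measurable. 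Once this identification is in place, the whole argument runs cleanly.
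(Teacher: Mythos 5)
Your proposal is correct, but it takes a genuinely different route from the paper. The paper never touches characteristic functions: it shows that $\tilde W^N$ and $(\tilde W^N)^2-t$ are continuous martingales with respect to the natural filtration $(\tilde\cG^N_t)$ by transferring the identity $\E[(W_t-W_s)\gamma(\cdots_{|[0,s]})]=0$ (and its analogue for $W^2-t$) through the equality of laws, invokes \cite[Lemma~67.10]{Rogers1994} to keep the martingale property under augmentation and right-continuous completion, and then concludes by L\'evy's characterization \cite[Lemma~72.3]{Rogers1994}; for the limit $\tilde W$ it passes the martingale property to the limit via \cite[Proposition~1.17]{Jacod2003} rather than by dominated convergence in a moment identity. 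Your approach instead encodes the full increment law at once in the identity $\E[e^{i\xi\cdot(W_t-W_s)}\Psi(\cdots)]=e^{-|\xi|^2(t-s)/2}\E[\Psi(\cdots)]$, upgrades $\Psi$ to indicators by a monotone-class argument, and handles augmentation and right-continuity by hand (null sets preserve independence; let $\varepsilon\downarrow 0$ in the identity on $\tilde\cG^N_{s+\varepsilon}$ using path continuity). What your route buys is self-containedness — no L\'evy characterization, no external limit theorem for martingales, and no need for the paper's subtle remark that $\sigma(W^2)\subsetneq\sigma(W)$ forces one to carry $W$ as an auxiliary process when applying \cite[Proposition~1.17]{Jacod2003} to $W^2-t$. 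What the paper's route buys is brevity: the martingale identities involve only first and second moments and the augmentation/right-continuity and limit steps are discharged by citation. Both arguments rest on the same two pillars (transfer of a conditional-expectation identity via equality of laws, then a limit passage using the almost sure Skorokhod convergence), and the step you flag as delicate — identifying $\tilde\cG^N_s$ with the $\sigma$-algebra generated by bounded continuous functionals of the restricted paths — is indeed unproblematic since the path spaces are Polish.
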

\begin{proof}
    In the first step, we demonstrate that \((\tilde W_t^N, t \ge 0)\) is a Brownian motion with respect to \((\tilde \cG_t^N, t \ge 0)\). Here, we need to use the fact that all processes \( (W,\eta^N, \mathcal M^N, \rho) \) are initially constructed on the same probability space and adapted. Hence, let \(0 \le s < t \le T\) and 
    \begin{align*}
    \gamma : \mathcal {W}_s \times \mathcal{X}_s \times \mathcal{Y}_s \times \mathcal {Z}_s \mapsto \R 
    \end{align*}
    be a continuous and bounded function, where \(\mathcal W_s, \mathcal{X}_s ,\mathcal Y_s, \mathcal Z_s \) are defined as in~\eqref{eq: spaces} but with the time variable \(s \) instead of \(T\). We obtain 
    \begin{align*}
        &\tilde \E\Big( (\tilde {W}^N_t -\tilde W^N_s) \gamma(\tilde W^N_{|[0,s]},  \tilde \eta^N_{|[0,s]}, \tilde{ \mathcal M}^N _{|[0,s]},\tilde \rho_{|[0,s]}) \Big) \\
        & \quad = \E\Big( ( {W}_t - W_s) \gamma(  W_{|[0,s]},   \eta^N_{|[0,s]}, {\mathcal M}^N _{|[0,s]},\rho_{|[0,s]}) \Big) \\
        &\quad = \E\Big( \E(  {W}_t - W_s |\cF_s) \gamma(  W_{|[0,s]},   \eta^N_{|[0,s]}, {\mathcal M}^N _{|[0,s]},\rho_{|[0,s]}) \Big) \\
        &\quad =0 .
    \end{align*}
    Together with the obvious integrability of \(\tilde W_t\) for all \(t\), we have demonstrated that \(\tilde W^N\) is a continuous martingale with respect to the filtration \((\tilde \cG_t^N , t \ge 0)\). By~\cite[Lemma~67.10]{Rogers1994} the process remains a martingale with respect to the filtration \((\tilde \cF_t^N,t \ge 0)\). Furthermore, applying the same steps, we can show that \(((\tilde W^N_t)^2-t, t \ge 0)\) is a martingale with respect to \((\tilde \cF_t^N,t \ge 0)\).  Hence, applying~\cite[Lemma~72.3]{Rogers1994} we deduce that \(\tilde W^N \) is Brownian motion with respect to \((\tilde \cF_t^N, t \ge 0)\). 
    For the limiting \(\tilde W\) we apply~\cite[p.~526, Proposition ~1.17]{Jacod2003} to find that \(\tilde W\) is a martingale with respect to the filtration \((\tilde \cG_t, t \ge 0)\). Again, by \cite[Lemma~67.10]{Rogers1994} the process remains a continuous martingale with respect to the filtration \((\tilde \cF_t,t \ge 0)\). Since, the almost everywhere convergence of \((\tilde W^N, N \in \N)\) implies the almost everywhere convergence of the squared process, we may utilize the same arguments to demonstrate that \((\tilde W_t^2-t, t \ge 0)\) is a martingale with respect to \((\tilde \cF_t,t \ge 0)\). In this step, we need to incorporate the process \((\tilde{W}_t, t \geq 0)\) as the auxiliary process \(Y\), following the notation in~\cite[p.~526, Proposition~1.17]{Jacod2003}, since, in general, \(\sigma(W^2) \subseteq \sigma(W)\).
    This implies that \((\tilde W_t, t \ge 0)\) is a Brownian motion with respect to \((\tilde \cF_t, t \ge 0)\).  
\end{proof}

\begin{remark} \label{remark: vortex_limit}
Notice that this section relies solely on the uniform estimates established in Section~\ref{sec: uniform_estimates}. Consequently, all statements in this section remain valid in the case \(\nu = 0\), with the interaction kernel given by the Biot--Savart kernel \(k_{\mathrm{vortex}}\).
\end{remark}

\section{Identifying the Limit} \label{sec: identification_limit}
In this section, we demonstrate the convergence of \((\eta^N, N \in \N)\) and, simultaneously, the weak existence of~\eqref{eq: limiting_spde}. After establishing the pathwise uniqueness of the fluctuation SPDE~\eqref{eq: limiting_spde} we prove our Main Theorem~\ref{theorem: main} at the end of the section.  
\subsection{Weak existence of SPDE~\ref{eq: limiting_spde}}
Our first result is regarding the SPDE~\eqref{eq: chaotic_spde}. We will demonstrate that \((\tilde \rho_t, t \ge 0) \), which is the limiting process derived by Skorohod's representation theorem in Proposition~\ref{prop: skorohod}, solves the SPDE~\ref{eq: chaotic_spde} on the new probability space \( (\tilde{\Omega}, \tilde{\cF}, \tilde{\P}) \). 
\begin{lemma} \label{lemma: solution_tilde}
Let \((\tilde \rho_t, t \ge 0) \) be given on  \( (\tilde{\Omega}, \tilde{\cF}, \tilde{\P}) \). Then, \((\tilde \rho_t, t \ge 0) \) solves the SPDE in the sense of Definition~\ref{def: solution_spde}. 
\end{lemma}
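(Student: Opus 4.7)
The plan is to transfer the SPDE identity from $(\Omega,\cF,\P)$ to $(\tilde\Omega,\tilde\cF,\tilde\P)$ at the pre-limit level using equality of joint laws, and then pass to the limit using the a.s. Skorokhod convergence. Fix $\varphi\in\testfunctions{\R^d}$ and introduce the Borel residual functional on $\mathcal{W}\times\mathcal{Z}$
\begin{align*}
F_\varphi(w,u)(t) :=& \; \qv{u_t-u_0,\varphi} + \int_0^t \qv{(k*u_s)u_s,\nabla\varphi}\dd s \\
& - \tfrac{1}{2}\sum_{\alpha,\beta=1}^d\int_0^t \qv{[\sigma_s\sigma_s^{\mathrm{T}}+\nu_s\nu_s^{\mathrm{T}}]_{(\alpha,\beta)}u_s,\partial_{z_\alpha}\partial_{z_\beta}\varphi}\dd s \\
& - \sum_{\alpha=1}^d\sum_{\hat l=1}^{\tilde m}\int_0^t \qv{\nu_s^{\alpha,\hat l}u_s,\partial_{z_\alpha}\varphi}\dd w_s^{\hat l},
\end{align*}
where the last integral is defined, in the usual way, as a limit in probability of Riemann sums so that $F_\varphi$ is a jointly measurable functional of the path $(w,u)$. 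Since $\rho$ is a solution on the original probability space, $F_\varphi(W,\rho)\equiv 0$ $\P$-a.s. Because the joint law of $(\tilde W^N,\tilde\rho^N)$ coincides with that of $(W,\rho)$, this yields $F_\varphi(\tilde W^N,\tilde\rho^N)\equiv 0$ $\tilde\P$-a.s.\ for every $N\in\N$.

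It then remains to pass to the limit $N\to\infty$ using the a.s.\ convergence $\tilde W^N\to \tilde W$ in $\mathcal{W}$ and $\tilde\rho^N\to\tilde\rho$ in $\mathcal{Z}=C([0,T];H^{a-1}(\R^d))$ from Proposition~\ref{prop: skorohod}. The Sobolev embedding $H^{a-1}(\R^d)\hookrightarrow L^\infty(\R^d)$ (valid since $a-1>d/2$), Assumption~\ref{ass: main} on $\sigma,\nu$, and the uniform bound $\tilde\E(\sup_{t\le T}\norm{\tilde\rho_t^N}_{H^{a-1}}^2)=\E(\sup_{t\le T}\norm{\rho_t}_{H^{a-1}}^2)\le C$ (from Remark~\ref{remark: continuity_of_spde} and law invariance) together allow passing to the limit in each deterministic drift and diffusion integral via dominated convergence. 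The bilinear interaction $\qv{(k*\tilde\rho_s^N)\tilde\rho_s^N,\nabla\varphi}$ converges to its counterpart by Young's inequality combined with $k\in L^2\cap L^\infty$ and the uniform $H^{a-1}$-bound. For the It\^o integral, one invokes the standard stability result for stochastic integrals under joint a.s.\ convergence of an integrand that is uniformly bounded in $L^2_{t,\omega}$ together with the uniform convergence of the driving martingale (see e.g.\ the Kurtz--Protter lemma), or, equivalently, controls the difference via the BDG inequality and a localization argument. Hence $F_\varphi(\tilde W,\tilde\rho)\equiv 0$ $\tilde\P$-a.s., and choosing a countable dense subset of $\testfunctions{\R^d}$ produces a common null set valid for all $\varphi$.

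To conclude, I must verify that $\tilde\rho\in L^2_{\tilde\cF^{\tilde W}}([0,T];H^{a}(\R^d))$ in the sense of Definition~\ref{def: solution_spde}. The integrability $\tilde\E\int_0^T\norm{\tilde\rho_t}_{H^{a}}^2\dd t<\infty$ follows again from equality of laws and Assumption~\ref{ass: entropy}. Predictability with respect to the augmented filtration of $\tilde W$ is more delicate since Skorokhod's theorem a priori preserves only laws, not filtrations: I invoke Assumption~\ref{ass: entropy} (pathwise uniqueness of~\eqref{eq: chaotic_spde}) and Yamada--Watanabe to obtain a Borel map $\Phi:\mathcal{W}\to\mathcal{Z}$ with $\rho=\Phi(W)$ $\P$-a.s.; equality of laws gives $\tilde\rho^N=\Phi(\tilde W^N)$ $\tilde\P$-a.s., and the almost sure convergence together with a measurability (Lusin--type) argument yields $\tilde\rho=\Phi(\tilde W)$ $\tilde\P$-a.s., which, combined with Lemma~\ref{lemma: brownian_motion}, places $\tilde\rho$ inside the augmented filtration generated by $\tilde W$.

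The main obstacle is the predictability transfer in the last paragraph: the identification of $\tilde\rho$ as a $\tilde\cF^{\tilde W}$-adapted process cannot be read off directly from the Skorokhod construction and requires the pathwise uniqueness assumption. The convergence of the It\^o integral, while standard, also needs to be handled with some care because the integrand depends nonlinearly on the converging $\tilde\rho^N$; both steps nonetheless go through under the assumptions at hand.
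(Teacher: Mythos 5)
Your proposal is correct in substance but does considerably more work than the paper, and the one step you flag as delicate is handled by a mechanism that needs repair. The paper's entire proof is what you relegate to your final paragraph: by Assumption~\ref{ass: entropy} (pathwise uniqueness) and the generalized Yamada--Watanabe theorem there is a measurable map $h$ with $\rho=h(\rho_0,W)$ $\P$-a.s.; since the laws of $(\tilde W^N,\tilde\rho^N)$ are all equal to that of $(W,\rho)$ and converge (via the a.s.\ Skorokhod convergence) to the law of $(\tilde W,\tilde\rho)$, the latter law coincides with that of $(W,\rho)$ and is therefore concentrated on the graph of $h$, giving $\tilde\rho=h(\tilde\rho_0,\tilde W)$ $\tilde\P$-a.s.\ and hence that $\tilde\rho$ is a solution with respect to $\tilde W$ — adaptedness and the weak formulation both come for free from the Yamada--Watanabe machinery. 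Your term-by-term verification of the weak formulation (transferring the residual functional $F_\varphi$ through equality of laws at level $N$ and passing to the limit with Kurtz--Protter) is therefore redundant once the functional representation is in hand, though it is not wrong: treating the stochastic integral as a limit in probability of Riemann sums is the standard way to make such a transfer legitimate, and your convergence estimates are plausible under the stated assumptions.

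The one genuine issue is in your derivation of $\tilde\rho=\Phi(\tilde W)$ from $\tilde\rho^N=\Phi(\tilde W^N)$ by "almost sure convergence together with a Lusin-type argument." Lusin's theorem gives continuity of $\Phi$ only on a compact set of large measure, and nothing guarantees that the paths $\tilde W^N(\omega)$ eventually lie in that set, so you cannot pass to the limit along the graph this way. The correct (and simpler) argument avoids the limit in $N$ inside the graph entirely: the set $\{(w,u):u=\Phi(w)\}$ is Borel, the common law of the $(\tilde W^N,\tilde\rho^N)$ assigns it full measure, and this law is preserved in the limit, so the law of $(\tilde W,\tilde\rho)$ also charges the graph with full measure. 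With that substitution your proof closes and coincides with the paper's.
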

\begin{proof}
    Since the probability space \( (\tilde{\Omega}, \tilde{\cF}, \tilde{\P}) \) supports a Brownian motion \((\tilde W_t, t \ge 0)\) with respect to a complete and right continuous filtration \((\tilde \cF_t, t \ge 0)\) we have a unique strong solution of the SPDE by Assumption~\ref{ass: entropy}. Hence, by the generalized Yamada--Watanabe theorem~\cite[Theorem~1.5]{Kurtz2014} there exists a measurable function \(h\) such that \(h(\rho_0,W_\cdot)=\tilde \rho\). 
    Additionally, a simple application of the dominated convergence theorem and the second property of Proposition~\ref{prop: skorohod} demonstrates that the law of \((W,\rho)\) coincides with the law of \((\tilde W, \tilde \rho)\). 
    Hence, since \(h(\rho_0,W_\cdot)=\tilde \rho, \;  \P\)-a.s., we follow \(h(\tilde  \rho_0 , \tilde W) = \tilde \rho, \; \tilde \P\)-a.s. and \((\tilde \rho_t, t \ge 0)\) is a solution of the SPDE~\eqref{eq: chaotic_spde} on the probability space \( (\tilde{\Omega}, \tilde{\cF}, \tilde{\P}) \) with Brownian motion \((\tilde W_t, t\ge 0)\) in the sense of Definition~\ref{def: solution_spde}. 
\end{proof}

In the following lemma, we show that the limiting process \((\tilde{ \mathcal M}_t)_{t \geq 0}\), conditioned on \(\mathcal{F}^{\tilde W}\), is a martingale whose covariance structure is determined by the solution of the stochastic Fokker--Planck equation~\eqref{eq: chaotic_spde}. Our approach follows the general outline of~\cite{Lacker2019}. However, we work on a different probability space, constructed via the Skorohod representation theorem. Additionally, we provide full details, as some steps, though seemingly straightforward, require careful justification. In particular, we establish Lemma~\ref{lemma: quadratic_common_noise}, which appears to be expected but, to the best of our knowledge, lacks an explicit reference in the literature.

\begin{lemma} \label{lemma: characterization_martingale}
    For any \(\varphi \in \testfunctions{\R^d} \), the process \((\tilde {\mathcal M_t}(\varphi), t \ge 0)\) is conditionally on \(\cF^{\tilde W}\) a Gaussian process with covariance structure given by 
    \begin{equation*}
        \tilde \E( \tilde {\mathcal M_t} (\varphi_1) \tilde {\mathcal M_s} (\varphi_2) | \cF^{\tilde W} ) =  \sum\limits_{l=1}^{m} \sum\limits_{q , \hat q =1}^d  \int\limits_0^{\min(s,t)} \langle \sigma_u^{q,l} \sigma_u^{\hat q, l} \partial_{x_q}\varphi \partial_{x_{\hat q }} \phi, \tilde \rho_u \rangle  \Id u 
    \end{equation*}
\end{lemma}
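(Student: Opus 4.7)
The plan is to identify $\tilde M(\varphi)$ as a continuous $(\tilde\cF_t)$-martingale that is orthogonal to $\tilde W$ and has an $\cF^{\tilde W}$-measurable quadratic variation with the claimed structure; conditional Gaussianity and the stated covariance then follow from a conditional L\'evy-type characterization via the exponential martingale.

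On the original probability space, an application of It\^o's formula, combined with the fact that each $B^i$ is independent of $W$, shows that
\[
\mathcal M_t^N(\varphi)\mathcal M_t^N(\psi) - \sum_{l=1}^m \sum_{q,\hat q=1}^d \int_0^t \langle \sigma_u^{q,l}\sigma_u^{\hat q,l}\partial_{x_q}\varphi\,\partial_{x_{\hat q}}\psi,\mu_u^N\rangle \Id u
\]
is a $(\cF_t)$-martingale and that $\mathcal M^N(\varphi) W^{\tilde l}$ is also a $(\cF_t)$-martingale. Expressing these facts in their weak form against bounded continuous test functionals $\Phi$ of paths up to time $s$, they transfer verbatim to the Skorokhod copies $(\tilde W^N,\tilde\eta^N,\tilde M^N,\tilde\rho^N)$ via Proposition~\ref{prop: skorohod}, with $\mu^N$ replaced by $\tilde\mu^N = \tilde\rho^N + \tilde\eta^N/\sqrt N$. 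I then pass to the limit $N\to\infty$ using the almost sure convergence of Proposition~\ref{prop: skorohod}, the uniform moment bounds of Lemma~\ref{lemma: tightness_martingale} and Lemma~\ref{lemma: uniform_estimate_sup}, and Vitali's theorem. Since $\tilde\mu^N\to\tilde\rho$ and $\sigma,\nabla\varphi,\nabla\psi$ are bounded and smooth, one obtains in the limit that $\tilde M(\varphi)$ is a continuous $(\tilde\cF_t)$-martingale, orthogonal to each $\tilde W^{\tilde l}$, with
\[
\langle \tilde M(\varphi),\tilde M(\psi)\rangle_t = \sum_{l=1}^m \sum_{q,\hat q=1}^d \int_0^t \langle \sigma_u^{q,l}\sigma_u^{\hat q,l}\partial_{x_q}\varphi\,\partial_{x_{\hat q}}\psi,\tilde\rho_u\rangle \Id u.
\]
Crucially, by Lemma~\ref{lemma: solution_tilde} together with the Yamada--Watanabe reconstruction employed there, $\tilde\rho$ is $\cF^{\tilde W}$-adapted, hence this quadratic variation is $\cF^{\tilde W}$-measurable and essentially bounded (using that $\tilde\rho_u$ is a probability density and the integrand is bounded).

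To upgrade these structural properties to conditional Gaussianity, I apply the complex It\^o formula to $Z^\lambda_t := \exp\bigl(i\lambda \tilde M_t(\varphi) + \tfrac{\lambda^2}{2}\langle \tilde M(\varphi)\rangle_t\bigr)$. The local-martingale property combined with the uniform bound on $\langle \tilde M(\varphi)\rangle_T$ makes $Z^\lambda$ a genuine $(\tilde\cF_t)$-martingale, and combining this with the orthogonality $\tilde M\perp\tilde W$ yields, for all bounded continuous $\Gamma_1$ on paths of $\tilde W$ and $\Gamma_2$ on paths of $\tilde M$ up to time $s$,
\[
\tilde\E\bigl[e^{i\lambda(\tilde M_t(\varphi)-\tilde M_s(\varphi))}\Gamma_1(\tilde W)\Gamma_2(\tilde M_{|[0,s]})\bigr] = \tilde\E\bigl[e^{-\tfrac{\lambda^2}{2}(\langle \tilde M(\varphi)\rangle_t-\langle \tilde M(\varphi)\rangle_s)}\Gamma_1(\tilde W)\Gamma_2(\tilde M_{|[0,s]})\bigr].
\]
This identifies the conditional law of $\tilde M(\varphi)$ given $\cF^{\tilde W}$ as that of a centered Gaussian martingale with variance process $\langle \tilde M(\varphi)\rangle$; the bilinear covariance formula asserted in the statement then follows by polarization applied to $\tilde M_t(\varphi_1)\tilde M_s(\varphi_2)$.

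The principal obstacle is the passage to the limit in the quadratic variation term $\int_0^t \langle \cdots,\mu_u^N\rangle\Id u$: since $\mu^N$ is not itself among the tight Skorokhod processes, I must route the argument through $\tilde\mu^N = \tilde\rho^N + \tilde\eta^N/\sqrt N$ and use the uniform entropy-based estimates of Section~\ref{sec: uniform_estimates} to confirm that the $N^{-1/2}\tilde\eta^N$ correction vanishes in the limit against the smooth test integrand, leaving precisely $\int_0^t \langle \cdots,\tilde\rho_u\rangle\Id u$; a secondary subtlety is checking that the limiting martingale property is with respect to the augmented filtration $(\tilde\cF_t)$ and not merely $(\tilde\cG_t)$, which follows as in Lemma~\ref{lemma: brownian_motion}.
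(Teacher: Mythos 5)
Your overall strategy coincides with the paper's: reduce conditional Gaussianity to two (conditional) martingale properties --- that of $\tilde M(\varphi)$ and that of $\tilde M(\varphi)\tilde M(\psi)$ minus the candidate bracket --- establish both at the particle level, transfer them through the Skorokhod copies using equality in law and the uniform integrability supplied by Lemmas~\ref{lemma: tightness_martingale} and~\ref{lemma: uniform_estimate}, and handle the drift of the bracket via $\tilde\mu^N=\tilde\rho^N+N^{-1/2}\tilde\eta^N$ so that the $N^{-1/2}\tilde\eta^N$ correction vanishes against the smooth integrand. The paper delegates the final implication ``two conditional martingale properties $\Rightarrow$ conditional Gaussian'' to the characterization of \cite[Lemma~5.4]{Lacker2019}, whereas you make it explicit via the conditional exponential martingale; that is the same mechanism, just unpacked. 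The paper also computes the compensator of $M^N(\varphi)M^N(\phi)$ by hand, using Lemma~\ref{lemma: quadratic_common_noise} and Corollary~\ref{cor: quadratic_common_noise} to kill the cross terms between distinct idiosyncratic Brownian motions, where you invoke It\^o's formula directly; both are fine.

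There is, however, one step in your argument that does not go through as written. You establish the pre-limit martingale properties only against test functionals of the paths \emph{up to time $s$}, pass to the limit, and then try to recover the identity against functionals $\Gamma_1(\tilde W)$ of the \emph{entire} path of $\tilde W$ from the $(\tilde\cF_t)$-martingale property of $Z^\lambda$ together with the bracket orthogonality $\langle\tilde M(\varphi),\tilde W^{\tilde l}\rangle=0$. Orthogonality of brackets does not, by itself, imply that a martingale remains a martingale under the initial enlargement $\tilde\cF_t\vee\cF^{\tilde W}_T$; yet exactly this enlarged martingale property is what your displayed identity asserts (note also that $e^{-\frac{\lambda^2}{2}(\langle\tilde M(\varphi)\rangle_t-\langle\tilde M(\varphi)\rangle_s)}$ is $\cF^{\tilde W}$-measurable but not $\tilde\cF_s$-measurable, so it must be pulled out of a conditional expectation given the future of $W$). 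The paper avoids this by taking $\gamma_1$ to be a functional of the full path of $W$ already at the particle level, where the required identity $\E\big(\gamma_1(W)\gamma_2(M^N_{|[0,s]})(M^N_t-M^N_s)\big)=0$ (and its analogue for the product) is justified by the independence of the $(B^i)$ from $W$, precisely the content of Lemma~\ref{lemma: quadratic_common_noise} with $\cG=\sigma(W)$. The repair is straightforward --- test against full-path functionals of $W$ before passing to the limit --- but as stated your upgrade from orthogonality is a genuine gap.
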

\begin{proof}
    In order to proof the claim we utilize the same characterization as~\cite[Lemma~5.4]{Lacker2019} and demonstrate that \(( \tilde M_t (\varphi), t \ge 0) \) and  
    \begin{equation*}
             \tilde {\mathcal M_t}  (\varphi)  \tilde{ \mathcal M_t}  (\phi) - 
             \sum\limits_{l=1}^{m} \sum\limits_{q , \hat q =1}^d  \int\limits_0^t \langle \sigma_s^{q,l} \sigma_s^{\hat q, l} \partial_{x_q}\varphi \partial_{x_{\hat q }} \phi, \tilde \rho_s \rangle  \Id s , \quad t \ge 0
    \end{equation*}
    conditioned on \(\cF^W\) are martingales. Let \(\gamma_1\) be a real valued continuous function on \(\mathcal W\) and \(\gamma_2\) be a real valued continuous function on \(\mathcal Y_s\). Then, 
    \begin{align*}
        \tilde \E( \gamma_1(\tilde W) \gamma_2(\tilde{ \mathcal M}_{|[0,s]}) (\tilde { \mathcal M_t} -\tilde { \mathcal M_s} )) 
        &= \lim\limits_{N \to \infty} \E( \gamma_1( W) \gamma_2( { \mathcal M}^N_{|[0,s]}) ( { \mathcal M} _t^N- {\mathcal M}_s^N))  \\
        &=0.
    \end{align*}
    Here, the first equality follows form the boundedness of the functions \(\gamma_1, \gamma_2\) and the uniform integrability of \((\tilde {\mathcal M}_t^N(\varphi) , N \in \N)\), which follows from Lemma~\ref{lemma: tightness_common_noise}, the equality in laws and the uniform bound on the relative entropy. The second equality, follows from the fact that \(({\mathcal M}_t^N, t \ge 0)\) is a sum of martingales with respect to initial filtration \((\cF_t, t \ge 0)\), which follows from the strong existence and uniqueness of the interacting particle system~\ref{eq: interacting_particle_system}. 
    For the second martingale, we perform similar steps to find
    \begin{align*}
          &\tilde \E( \gamma_1(\tilde W) \gamma_2(\tilde {\mathcal  M}_{|[0,s]}) (\tilde {\mathcal M}_t (\varphi)  \tilde {\mathcal M}_t (\phi)  - \tilde {\mathcal M}_s (\varphi)  \tilde {\mathcal M}_s (\phi))  )  \\
          &\quad = \lim\limits_{N\to \infty}  \E( \gamma_1( W) \gamma_2( {\mathcal M}^N_{|[0,s]}) ({\mathcal M}_t^N (\varphi) {\mathcal M}_t^N (\phi)  -  { \mathcal M}_s^N (\varphi) { \mathcal M}_s^N (\phi))  ) . 
    \end{align*}
    Now, let us fix \( N \in \N \). Then, 
    \begin{align*}
        &\E( \gamma_1( W) \gamma_2( {\mathcal M}^N_{|[0,s]}) ({ \mathcal M}_t^N (\varphi) { \mathcal M}_t^N (\phi)) \\
        &\quad = \frac{1}{N} \sum\limits_{i,j=1}^N \E\bigg( \gamma_1( W) \gamma_2( { \mathcal M}^N_{|[0,s]})   \int\limits_0^t (\sigma^T(s,X_s^{i}) \nabla \varphi(X_s^{i})) \Id B_s^{i} \int\limits_0^t (\sigma^T(s,X_s^{j}) \nabla \phi(X_s^{j})) \Id B_s^{j}  \bigg) \\
        &\quad = \frac{1}{N} \sum\limits_{i,j=1}^N \sum\limits_{l,\hat l=1}^{m} \sum\limits_{q, \hat q=1}^d  
        \E\bigg( \gamma_1( W) \gamma_2( { \mathcal M}^N_{|[0,s]})\\
        &\quad \quad \cdot \int\limits_0^t  \sigma^{q,l} (s,X_s^{i}) \partial_{x_q} \varphi(X_s^{i})    \Id B_s^{i,l} \int\limits_0^t \sigma^{\hat q,\hat l} (s,X_s^{j}) \partial_{x_{\hat q}} \phi(X_s^{j})  \Id B_s^{j,\hat  l} \bigg). 
    \end{align*}
  The same inequality holds for the time \(s\) and by 
  applying Lemma~\ref{lemma: quadratic_common_noise} we can discard all cross terms. 
  Denoting by 
  \begin{equation*}
      I_t^{i,l,q}(\varphi):= \int\limits_0^t \sigma^{q,l} (u,X_u^{i}) \partial_{x_{q}} \varphi(X_u^{i})  \Id B_u^{i, l}, \quad 
      I_t^{i,l,\hat q}(\phi):= \int\limits_0^t \sigma^{\hat q,l} (u,X_u^{i}) \partial_{x_{\hat q}} \phi(X_u^{i})  \Id B_u^{i, l},
  \end{equation*}
   we obtain 
  \begin{align*}
      &\E( \gamma_1( W) \gamma_2( M^N_{|[0,s]}) ( { \mathcal M}_t^N (\varphi) {\mathcal M}_t^N (\phi)- { \mathcal M}_s^N (\varphi) { \mathcal M}_s^N(\phi)) )\\
      &\quad=  \frac{1}{N} \sum\limits_{i=1}^N \sum\limits_{l=1}^{m} \sum\limits_{q , \hat q =1}^d 
        \E\Big( \gamma_1( W) \gamma_2( {\mathcal M}^N_{|[0,s]}) (I_t^{i,l,q}(\varphi)I_t^{i,l,\hat q}(\phi)- I_s^{i,l,q}(\varphi)I_s^{i,l,\hat q}(\phi) ) \Big) \\
    &\quad = \frac{1}{N} \sum\limits_{i=1}^N \sum\limits_{l=1}^{m}\sum\limits_{q , \hat q =1}^d  
        \E\Big( \gamma_1( W) \gamma_2( {\mathcal M}^N_{|[0,s]}) ((I_t^{i,l,q}(\varphi) - I_s^{i,l,q}(\varphi)) (I_t^{i,l,\hat q}(\phi)-I_s^{i,l, \hat q}(\phi) ) \\
        &\quad \quad +  I_t^{i,l, q}(\varphi)  I_s^{i,l,\hat q}(\phi) +I_s^{i,l, q}(\varphi)  I_t^{i,l, \hat q}(\phi)  - 2 I_s^{i,l,q}(\varphi) I_s^{i,l,\hat q}(\phi) \Big) .
  \end{align*}
Applying Corollary~\ref{cor: quadratic_common_noise} we notice that the sum of the last three stochastic integrals vanish. Finally, applying~\cite[Chapter~3.2, Proposition~2.10]{KaratzasIoannis2009Bmas} to the the filtration \((\sigma (\cF^{W}, \cF_t), t \ge 0)\) we find 
\begin{align*}
     &\E( \gamma_1( W) \gamma_2( { \mathcal M}^N_{|[0,s]}) ({ \mathcal M}_t^N (\varphi) { \mathcal M}_t^N (\phi)- { \mathcal M}_s^N (\varphi){ \mathcal M}_s^N(\phi)) ) \\
     &\quad = \frac{1}{N} \sum\limits_{i=1}^N \sum\limits_{l=1}^{m} \sum\limits_{q , \hat q =1}^d 
        \E\bigg( \gamma_1( W) \gamma_2( { \mathcal M}^N_{|[0,s]})  \\
        & \quad \quad \quad \E\bigg( \int\limits_s^t \sigma^{ q,l} (u,X_u^{i}) 
 \sigma^{\hat q,l}(u,X_u^{i}) \partial_{x_q}\varphi(X_u^{i})\partial_{x_{\hat q}} \phi(X_u^{i}) \Id u    \vert\sigma (\cF^{ W}, \cF_s) \bigg) \bigg) \\
 &\quad = \sum\limits_{l=1}^{m} \sum\limits_{q , \hat q =1}^d 
        \E\bigg( \gamma_1( W) \gamma_2( { \mathcal M}^N_{|[0,s]}) \int\limits_s^t \langle  \sigma^{ q,l}_u
 \sigma^{\hat q,l}_u \partial_{x_q}\varphi \partial_{x_{\hat q}} \phi, \mu_u^N   \rangle \Id u \bigg) \\
 &\quad =   \sum\limits_{l=1}^{m} \sum\limits_{q , \hat q =1}^d 
        \E\bigg( \gamma_1( W) \gamma_2( { \mathcal M}^N_{|[0,s]}) \int\limits_s^t \langle  \sigma^{ q,l}_u
 \sigma^{\hat q,l}_u \partial_{x_q}\varphi \partial_{x_{\hat q}} \phi, \frac{1}{\sqrt N}\eta^N_u + \rho_u    \rangle \Id u \bigg) \\
  &\quad =   \sum\limits_{l=1}^{m} \sum\limits_{q , \hat q =1}^d 
        \tilde \E\bigg( \gamma_1( \tilde W) \gamma_2( \tilde { \mathcal M}^N_{|[0,s]}) \int\limits_s^t \langle  \sigma^{ q,l}_u
 \sigma^{\hat q,l}_u \partial_{x_q}\varphi \partial_{x_{\hat q}} \phi, \tilde \rho_u^N  \rangle \Id u \bigg) \\
 &\quad \quad + \E\bigg( \gamma_1( W) \gamma_2( {\mathcal M}^N_{|[0,s]}) \int\limits_s^t \langle  \sigma^{ q,l}_u
 \sigma^{\hat q,l}_u \partial_{x_q}\varphi \partial_{x_{\hat q}} \phi, \frac{1}{\sqrt N}\eta^N_u  \rangle \Id u \bigg), 
\end{align*}
where we used the defining property of the conditional expectation in the second step. Putting everything together we find 
\begin{align*}
    &\tilde \E\bigg(( \gamma_1(\tilde W) \gamma_2(\tilde {\mathcal M}_{|[0,s]}) \bigg((\tilde {\mathcal M}_t (\varphi)  \tilde { \mathcal M}_t (\phi)  - \tilde { \mathcal M}_s (\varphi)  \tilde { \mathcal M}_s (\phi)) \\
    &\quad \quad - \sum\limits_{l=1}^{m} \sum\limits_{q , \hat q =1}^d \int\limits_s^t \langle  \sigma^{ q,l}_u
 \sigma^{\hat q,l}_u \partial_{x_q}\varphi \partial_{x_{\hat q}} \phi, \tilde \rho_u\rangle \Id u \bigg) \bigg)    \\
 &\quad \le \limsup\limits_{N\to \infty} \bigg| \sum\limits_{l=1}^{m} \sum\limits_{q , \hat q =1}^d 
        \tilde \E\bigg( \gamma_1( \tilde W) \gamma_2( \tilde { \mathcal M}^N_{|[0,s]}) \int\limits_s^t \langle  \sigma^{ q,l}_u
 \sigma^{\hat q,l}_u \partial_{x_q}\varphi \partial_{x_{\hat q}} \phi, \tilde \rho_u^N-\tilde \rho_u  \rangle \Id u \bigg) \bigg| \\
 &\quad \quad + \limsup\limits_{N\to \infty} \bigg|  \sum\limits_{l=1}^{m} \sum\limits_{q , \hat q =1}^d  \E \bigg( \gamma_1( W) \gamma_2( { \mathcal M}^N_{|[0,s]}) \int\limits_s^t \langle  \sigma^{ q,l}_u
 \sigma^{\hat q,l}_u \partial_{x_q}\varphi \partial_{x_{\hat q}} \phi, \frac{1}{\sqrt N}\eta^N_u  \rangle \Id u \bigg) \bigg|. 
\end{align*}
The right hand side vanishes immediately if we can demonstrate that 
\begin{equation} \label{eq: quadratic_common_aux1}
    \sigma^{ q,l}_u
 \sigma^{\hat q,l}_u \partial_{x_q}\varphi \partial_{x_{\hat q}} \phi 
 \in H^{\alpha}(\R^d)
\end{equation}
for \(\alpha > d/2+2\) by the almost everywhere convergence of \((\tilde \rho^N, N \in \N)\) towards \(\tilde \rho\), the uniform bound of \((\eta^N, N \in \N)\) provided by Lemma~\ref{lemma: uniform_estimate} and the boundedness of \(\gamma_1, \gamma_2\). 
But the functions \(\varphi,\phi\) are smooth and by Assumption~\ref{ass: main} the coefficient \(\sigma\) is smooth enough to apply the pointwise multiplication in Theorem~\ref{theorem: holder}. Hence,~\eqref{eq: quadratic_common_aux1} holds and by utilizing the conditional martingale property the covariation formula follows. 
 \end{proof}

In the next Lemma we tackle the non-linear drift term, which is the main difficulty in the limiting procedure. 
\begin{lemma} \label{lemma: drift_convergence}
Let \(\varphi \in \testfunctions{\R^d}\). It holds
\[
    \lim\limits_{N \to \infty} \tilde\E \bigg(\sup\limits_{t \in [0,T] }  \bigg| \int\limits_0^t \langle   \sqrt{N} (\tilde \mu_s^N k*\tilde \mu_s^N -  \tilde{\rho}_s k*\tilde{\rho}_s) -( \tilde{\rho}_s k*\tilde \eta_s + \tilde \eta_s k*\tilde{\rho}_s) , \nabla \varphi \rangle  \Id s  \bigg|   \bigg) =0 ,
    \]
    where \(\tilde \mu_t^N := \frac{1}{\sqrt N} \tilde \eta^N_t+\tilde{\rho}_t \). 
\end{lemma}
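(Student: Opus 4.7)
The plan is to expand the integrand algebraically and reduce it to three summands handled separately. Writing $\tilde\mu^N_s = \tilde\rho_s + N^{-1/2}\tilde\eta^N_s$ and expanding yields
\[
\sqrt N\bigl(\tilde\mu^N_s\, k*\tilde\mu^N_s - \tilde\rho_s\, k*\tilde\rho_s\bigr) = \tilde\eta^N_s\, k*\tilde\rho_s + \tilde\rho_s\, k*\tilde\eta^N_s + \tfrac{1}{\sqrt N}\,\tilde\eta^N_s\, k*\tilde\eta^N_s,
\]
so that subtracting the limit drift $\tilde\rho_s\, k*\tilde\eta_s + \tilde\eta_s\, k*\tilde\rho_s$ rewrites the integrand as
$(\tilde\eta^N_s - \tilde\eta_s)\,k*\tilde\rho_s + \tilde\rho_s\,k*(\tilde\eta^N_s - \tilde\eta_s) + N^{-1/2}\tilde\eta^N_s\,k*\tilde\eta^N_s$. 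It then suffices to show that $\tilde{\E} \sup_{t \in [0,T]}\bigl|\int_0^t \langle\cdot,\nabla\varphi\rangle\,\Id s\bigr|$ vanishes against each of the three pieces.

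For the quadratic small term the key identity is
\[
\tfrac{1}{\sqrt N}\langle \tilde\eta^N_s\, k*\tilde\eta^N_s, \nabla\varphi\rangle = \sqrt N\, \langle \nabla\varphi \cdot k*(\tilde\mu^N_s - \tilde\rho_s),\, \tilde\mu^N_s - \tilde\rho_s\rangle.
\]
Transferring this to the original probability space via the equality of laws from Proposition~\ref{prop: skorohod} and applying Lemma~\ref{lemma: uniform_identification} with the uniform entropy bound yields a pointwise-in-$s$ estimate $\tilde{\E}|\cdot| \le C(\nabla\varphi,k)/N$. Dominating $\sup_t|\int_0^t| \le \int_0^T|\cdot|\,\Id s$ and using Fubini gives a bound of order $\sqrt N \cdot T \cdot C/N = CT/\sqrt N \to 0$.

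For the two linear difference terms I swap $k$ with the test function by Fubini, rewriting
\[
\langle (\tilde\eta^N_s - \tilde\eta_s)\,k*\tilde\rho_s, \nabla\varphi\rangle = \langle \tilde\eta^N_s - \tilde\eta_s,\, (k*\tilde\rho_s)\nabla\varphi\rangle
\]
and its symmetric counterpart $\langle \tilde\eta^N_s - \tilde\eta_s, \check k *(\tilde\rho_s\nabla\varphi)\rangle$ with $\check k(z) := k(-z)$. The plan is to establish that both test functions lie in $C([0,T]; H^{a-1}(\R^d))$ almost surely, via Young's inequality applied to $k \in L^2$ combined with the $L^2$-control of the derivatives of $\tilde\rho_s \in H^{a-1}$ (Remark~\ref{remark: continuity_of_spde}); the compact support of $\nabla\varphi$ converts the resulting $W^{a-1,\infty}$-estimate on $k*\tilde\rho_s$ into a genuine $H^{a-1}$-bound on the product. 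Since the standing hypothesis gives $a - 1 > d/2 + 2 + 1/m$ for small $1/m$, one has $\mathcal X \hookrightarrow C([0,T]; H^{-(a-1)}(\R^d))$ and the duality pairing is admissible. Crudely estimating $\sup_t|\int_0^t| \le T\sup_t|\cdot|$ and Cauchy--Schwarz bound the expectation by
\[
T\, \tilde{\E}\bigl(\sup_t \|\tilde\eta^N_s - \tilde\eta_s\|_{H^{-(a-1)}}^2\bigr)^{1/2} \tilde{\E}\bigl(\sup_t \|f_s\|_{H^{a-1}}^2\bigr)^{1/2};
\]
the second factor is uniformly finite by Remark~\ref{remark: continuity_of_spde}, and the first vanishes by Vitali's theorem, using almost sure convergence in $C([0,T]; H^{-(a-1)})$ from Proposition~\ref{prop: skorohod} and uniform $L^2$-integrability from the fourth-moment bound of Lemma~\ref{lemma: uniform_estimate_sup}.

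The principal obstacle, absent in the torus setting of~\cite{Xianliang2023}, is the regularity matching for the linear terms on the unbounded domain $\R^d$: since $k \in L^\infty(\R^d)$ does not imply $k \in L^1(\R^d)$, the convolution $k * \tilde\rho_s$ is only controlled in $W^{a-1,\infty}$ globally, not in $H^{a-1}$. The compact support of $\nabla\varphi$ is decisive in converting this pointwise control into the Sobolev estimate needed to pass the pairing into $H^{a-1}\times H^{-(a-1)}$ duality, and a symmetric argument using the compact support of $\tilde\rho_s\nabla\varphi$ together with $\check k \in L^2$ handles $\check k *(\tilde\rho_s\nabla\varphi)$ in the same norm.
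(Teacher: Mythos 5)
Your proposal follows essentially the same route as the paper: the same algebraic decomposition into the quadratic term plus two linear difference terms, the same treatment of the quadratic term via the law-transfer to the original space and Lemma~\ref{lemma: uniform_identification} (yielding the $O(N^{-1/2})$ bound), and the same duality--Young--pointwise-multiplication argument for the linear terms with the reflected kernel $\check k$. The substance is correct.

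One step as written does not follow from the standing hypotheses: you claim $\mathcal X \hookrightarrow C([0,T];H^{-(a-1)}(\R^d))$ because ``$a-1 > d/2+2+1/m$ for small $1/m$.'' The paper's convention is only $a > d/2+2$, so $a-1 > d/2+1$, and the $C([0,T];\cdot)$ component of $\mathcal X$ lives in $H^{-d/2-2-1/m}$, which is a \emph{weaker} norm than $H^{-(a-1)}$ whenever $a < d/2+3$; almost sure convergence of $\sup_t\|\tilde\eta^N_t-\tilde\eta_t\|_{H^{-(a-1)}}$ is therefore not available in general. This is easily repaired — and is in effect what the paper does — by pairing against a lower-regularity index: bound $\sup_t|\int_0^t\cdots| \le \int_0^T|\cdots|\,\Id s$, use the duality $H^{-\beta}\times H^{\beta}$ with $d/2<\beta<d/2+1$ (the test functions $(k*\tilde\rho_s)\nabla\varphi$ and $\check k*(\tilde\rho_s\nabla\varphi)$ lie in $H^{\beta}$ a fortiori), and invoke the $L^2([0,T];H^{-d/2-1/m})$ component of $\mathcal X$ for the convergence, together with the uniform square integrability you already cite. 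With that adjustment your argument closes exactly as the paper's does.
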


\begin{proof}
    We utilize the identity 
    \begin{equation*}
         \sqrt{N}  \big( \tilde \mu_s^N k*\tilde \mu_s^N - \tilde{\rho}_s k*\tilde{\rho}_s \big) 
         = \tilde{\rho}_s k*\tilde \eta_s^N + \tilde \eta_s^N k*\tilde \rho_s  + \frac{1}{\sqrt{N}} \tilde \eta_s^N  k*\tilde \eta_s^N. 
    \end{equation*}
    Hence, for \(\varphi \in \testfunctions{\R^d}\) we obtain 
    \begin{align*}
         &\int\limits_0^t \langle \sqrt{N}( \tilde \mu_s^N k*\tilde \mu_s^N - \tilde{\rho}_s k*\tilde{\rho}_s) -( \tilde{\rho}_s k*\tilde \eta_s + \tilde \eta_s k*\tilde{\rho}_s) , \nabla \varphi \rangle  \Id s \\
        &\quad \le  \bigg( \int\limits_0^T  \frac{\langle \tilde \eta_s^N  k*\tilde \eta_s^N, \nabla \varphi  \rangle}{\sqrt{N}} \Id s 
        +\int\limits_0^T \langle \tilde{\rho}_s k* (\tilde \eta_s^N-\tilde \eta_s), \nabla  \varphi \rangle \Id s 
        + \int\limits_0^T \langle (\tilde \eta_s^N-\tilde \eta_s) k*\tilde{\rho}_s, \nabla  \varphi  \rangle \Id s \bigg) . 
    \end{align*}
    For the first term, we can use Lemma~\ref{lemma: uniform_identification}, the fact that \(\tilde \eta^N\) has the same distribution as \(\eta^N\) and our Assumption~\ref{ass: entropy} on the relative entropy to obtain 
    \begin{align*}
      \frac{1}{\sqrt{N}} \int\limits_0^T \tilde\E\big(| \langle \tilde \eta_s^N  k*\tilde \eta_s^N, \nabla \varphi  \rangle | \big)  \Id s 
        &\le T \sqrt{N} \sup\limits_{0 \le t \le T} \E ( |\langle \nabla \varphi \cdot  k*(\mu_t^N -\rho_t),\mu_t^N -\rho_t \rangle | )  \\
        &\le \frac{CT}{\sqrt{N}} \bigg( \sup\limits_{0 \le t \le T}  \E\bigg( \mathcal{H}(\rho_t^N \vert \rho_t^{\otimes N} \bigg) +1 \bigg) \\
        &\to 0, \; \mathrm{as} \; N \to \infty. 
    \end{align*}
    Utilizing the duality estimate for \(d/2 < \alpha < d/2+1\) for the second and third term, we obtain
    \begin{equation*}
        \langle (\tilde \eta_s^N-\tilde \eta_s) k*\tilde{\rho}_s, \nabla  \varphi  \rangle 
        \le  \norm{\tilde \eta_s^N-\tilde \eta_s}_{H^{-\alpha}(\R^d)} \norm{k*\tilde{\rho}_s \cdot \nabla \varphi}_{H^{\alpha}(\R^d)} 
    \end{equation*}
    and 
    \begin{equation*}
        \langle \tilde{\rho}_s k* (\tilde \eta_s^N-\tilde \eta_s), \nabla  \varphi \rangle 
        \le   \norm{\tilde \eta_s^N-\tilde \eta_s}_{H^{-\alpha}(\R^d)}  \norm{\hat{k}*(\tilde{\rho}_s \nabla \varphi)}_{H^{\alpha}(\R^d)} ,
    \end{equation*}
    where \(\hat{k}(x) = k(-x)\) is the reflection. Applying, first Theorem~\ref{theorem: holder} and then Lemma~\ref{lemma: young} we find 
    \begin{equation*}
        \norm{k*\tilde{\rho}_s \cdot \nabla \varphi}_{H^{\alpha}(\R^d)} 
        \le C \norm{k*\tilde{\rho}_s}_{B^{a}_{\infty,\infty}(\R^d)} \norm{\nabla \varphi}_{H^{\alpha}(\R^d)} 
        \le C  \norm{k}_{L^2(\R^d)} \norm{\tilde{\rho}_s}_{H^{a}(\R^d)} \norm{\nabla \varphi}_{H^{\alpha}(\R^d)} 
    \end{equation*}
    for \(a> \alpha\). Next, we apply first Lemma~\ref{lemma: young} and then Theorem~\ref{theorem: holder} to obtain 
    \begin{equation*}
        \norm{\hat{k}*(\tilde{\rho}_s \nabla \varphi)}_{H^{\alpha}(\R^d)}
        \le \norm{\hat{k}}_{L^2(\R^d)} \norm{\tilde{\rho}_s  \nabla \varphi}_{B_{1,2}^\alpha(\R^d)}
        \le \norm{k}_{L^2(\R^d)} \norm{\tilde{\rho}_s}_{H^a(\R^d)} \norm{\nabla\varphi}_{H^\alpha(\R^d)}. 
    \end{equation*}
Finally, combining everything we arrive at 
\begin{align*}
    &\int\limits_0^T \langle \tilde{\rho}_s k* (\tilde \eta_s^N-\tilde \eta_s), \nabla  \varphi \rangle \Id s 
        + \int\limits_0^T \langle (\tilde \eta_s^N-\tilde \eta_s) k*\tilde{\rho}_s, \nabla  \varphi  \rangle \Id s  \\
        &\quad \le \int\limits_0^T  \norm{\eta_s^N-\tilde \eta_s}_{H^{-\alpha}(\R^d)} \norm{k}_{L^2(\R^d)} \norm{\tilde{\rho}_s}_{H^a(\R^d)} \norm{\nabla \varphi}_{H^{\alpha}(\R^d)} \Id s  ,
\end{align*}
which converges as \(N\to \infty\) by the almost everywhere convergence of \((\tilde \eta^N, N \in \N)\) towards \(\tilde \eta\) in \(C([0,T]; H^{-\alpha}(\R^d))\) and the uniform integrability, which is implied by the uniform square integrability of the \((\tilde \eta^N, N \in \N)\).

\end{proof}

\begin{remark}
   Similar to the situation described in~\cite{nikolaev2024common}, we require \( k \in L^2(\mathbb{R}^d) \) in order to estimate the convolution involving \(\hat{k}\). This mirrors the issue encountered by the author in the stability analysis in~\cite{nikolaev2024common}. 
   Notice that, while some small refinement of the estimation can be achieved by utilizing the fact that \(\eta^N\) is a signed measure, allowing the use of moment estimates to artificially bound the domain \(\mathbb{R}^d\), this approach fails for \(\eta\). The reason is that \(\eta\) is, a priori, only an element of \( H^{-\alpha}(\mathbb{R}^d) \), where discussing moments does not make sense.
\end{remark}

\begin{remark} \label{remark: vortex_stability}
At this stage, our approach for the vortex model encounters a fundamental obstruction in the case of the vortex model. The difficulty arises at the level of the estimate, and consequently in the well posedness, of the term
\[
\norm{k_{\mathrm{vortex}} * (\rho_t \varphi)}_{H^\alpha(\R^2)} .
\]
This estimate corresponds to the critical Hardy--Littlewood--Sobolev inequality in dimension \(d = 2\), and cannot be obtained within our current framework. As a result, it becomes unclear how to give a rigorous meaning to the term \(k_{\mathrm{vortex}} * \eta \rho_t\) on \(\R^2\) in the limiting SPDE~\ref{eq: limiting_spde}, when the fluctuation \(\eta\) is only a distribution in \(H^{-\alpha}(\R^2)\)

At this point, the unboundedness of the domain plays a crucial role, as it prevents the use the fact that \(k_{\mathrm{vortex}} \in L^1(\R^d)\), which would otherwise allow one to control this term. Overcoming this difficulty appears to require new ideas, and we therefore leave this problem for future work.
\end{remark}

\begin{theorem} \label{theorem: weak_existence}
    The limit \(\tilde \eta\) solves the SPDE~\eqref{eq: limiting_spde} in the sense of Definition~\ref{def: limiting_spde} on the stochastic basis \((\tilde \Omega , \tilde \cF, (\tilde \cF_t, t \ge 0), \tilde \P)\).
\end{theorem}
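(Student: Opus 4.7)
The plan is to verify, one by one, the six defining conditions of Definition~\ref{def: limiting_spde} for the tuple $(\tilde W, \tilde \eta, \tilde M, \tilde \rho)$ on the stochastic basis $(\tilde \Omega, \tilde \cF, (\tilde \cF_t, t \ge 0), \tilde \P)$ constructed after Proposition~\ref{prop: skorohod}. Condition (1) is Proposition~\ref{prop: skorohod} itself; condition (3) is Lemma~\ref{lemma: brownian_motion}; condition (4) is Lemma~\ref{lemma: solution_tilde}; and the conditional Gaussianity together with the covariance identity in condition (5) is Lemma~\ref{lemma: characterization_martingale}. For condition (2), almost sure convergence of $\tilde \eta^N$ in $\mathcal{X}$ delivers continuity of $\tilde \eta$ in $H^{-\alpha-2}(\R^d)$ and membership in $L^2([0,T]; H^{-\alpha}(\R^d))$; the $L^2_{\tilde \cF}$ bound follows from Fatou's lemma applied to Lemma~\ref{lemma: uniform_estimate_sup}, which transfers from the original space by equality of laws. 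Adaptedness to $(\tilde \cF_t, t \ge 0)$ is built into the definition of the filtration.

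The only substantive task is condition (6), the weak formulation~\eqref{eq: def_spde_weak_formulation}. By Proposition~\ref{prop: skorohod}(2) the prelimit equation~\eqref{eq: etaN_spde} holds $\tilde \P$-a.s.\ on the new space for the tuple $(\tilde W^N, \tilde \eta^N, \tilde M^N, \tilde \rho^N)$; the plan is to pass to the limit $N \to \infty$ in each term tested against $\varphi \in \testfunctions{\R^d}$. The left-hand side $\langle \tilde \eta_t^N, \varphi \rangle \to \langle \tilde \eta_t, \varphi \rangle$ a.s.\ from convergence in $C([0,T]; H^{-\alpha-2}(\R^d))$. The nonlinear drift term is handled by the previous lemma, which controls the difference $\sqrt{N}(\tilde \mu_s^N k * \tilde \mu_s^N - \tilde \rho_s k * \tilde \rho_s) - (\tilde \rho_s k * \tilde \eta_s + \tilde \eta_s k * \tilde \rho_s)$. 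The second-order term $\int_0^t \langle \tilde \eta_s^N, \mathrm{Tr}((\sigma_s\sigma_s^{\mathrm T} + \nu_s\nu_s^{\mathrm T})\nabla^2 \varphi) \rangle \, \d s$ converges by duality once one uses the pointwise multiplier estimate (Theorem~\ref{theorem: holder}) to place the test function in a positive-order Bessel potential space and combines it with the a.s.\ $L^2([0,T]; H^{-\alpha}(\R^d))$-convergence of $\tilde \eta^N$ (upgraded to $L^1(\tilde \P)$-convergence via the uniform bound of Lemma~\ref{lemma: uniform_estimate_sup}). The martingale term $\tilde M_t^N(\varphi) \to \tilde M_t(\varphi)$ comes from convergence in $\mathcal{Y}$.

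The main obstacle is the common-noise stochastic integral: one must show
\begin{equation*}
\int_0^t \langle \tilde \eta_s^N, \nu_s^{\mathrm T} \nabla \varphi \rangle \, \d \tilde W_s^N \longrightarrow \int_0^t \langle \tilde \eta_s, \nu_s^{\mathrm T} \nabla \varphi \rangle \, \d \tilde W_s \quad \text{in probability.}
\end{equation*}
The integrand sequence is adapted to $(\tilde \cF_t^N, t \ge 0)$ and each $\tilde W^N$ is a Brownian motion relative to this filtration by Lemma~\ref{lemma: brownian_motion}, so the Itô integral and its isometry are available on the new space. The plan is to split
\begin{equation*}
\int_0^t \langle \tilde \eta_s^N - \tilde \eta_s, \nu_s^{\mathrm T} \nabla \varphi \rangle \, \d \tilde W_s^N + \int_0^t \langle \tilde \eta_s, \nu_s^{\mathrm T} \nabla \varphi \rangle \, \d (\tilde W_s^N - \tilde W_s).
\end{equation*}
The first piece is controlled in $L^2(\tilde \P)$ by the Itô isometry together with a.s.\ and $L^2$-convergence of $\tilde \eta^N$ in $L^2([0,T]; H^{-\alpha}(\R^d))$, using Theorem~\ref{theorem: holder} to bound $\nu_s^{\mathrm T} \nabla \varphi$ in $H^{\alpha}(\R^d)$. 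The second piece is treated by approximating $s \mapsto \langle \tilde \eta_s, \nu_s^{\mathrm T} \nabla \varphi \rangle$ by piecewise-constant processes (possible by the continuity of $\tilde \eta$ in $H^{-\alpha-2}(\R^d)$), carrying out an integration-by-parts/telescoping step that reduces the integral to values of $\tilde W_s^N - \tilde W_s$ at finitely many times, and passing to the limit via the uniform convergence $\tilde W^N \to \tilde W$ in $C([0,T]; \R^{\tilde m})$ together with a routine tightness-in-$N$ estimate on the approximation error using the uniform $L^2_{\tilde \cF^N}$-bound of $\tilde \eta^N$. With all six conditions verified, $(\tilde W, \tilde \eta, \tilde M, \tilde \rho)$ is a martingale solution of~\eqref{eq: limiting_spde} in the sense of Definition~\ref{def: limiting_spde}, proving the theorem.
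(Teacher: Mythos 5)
Your verification of conditions (1)--(5) matches the paper exactly (Proposition~\ref{prop: skorohod}, Lemma~\ref{lemma: brownian_motion}, Lemma~\ref{lemma: solution_tilde}, Lemma~\ref{lemma: characterization_martingale}), and your treatment of the drift, second-order, and idiosyncratic-martingale terms in condition (6) is the same term-by-term passage to the limit the paper performs. The divergence, and the problem, is in the common-noise stochastic integral. You propose to prove directly that \(\int_0^t \langle \tilde\eta^N_s, \nu_s^{\mathrm T}\nabla\varphi\rangle \,\d \tilde W^N_s \to \int_0^t \langle \tilde\eta_s, \nu_s^{\mathrm T}\nabla\varphi\rangle \,\d \tilde W_s\) by splitting off \(\int_0^t \langle \tilde\eta^N_s-\tilde\eta_s, \nu_s^{\mathrm T}\nabla\varphi\rangle \,\d \tilde W^N_s\) and \(\int_0^t \langle \tilde\eta_s, \nu_s^{\mathrm T}\nabla\varphi\rangle \,\d(\tilde W^N_s-\tilde W_s)\) and controlling both via the It\^o isometry. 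But both of these objects integrate the \emph{limit} process \(\tilde\eta\) against the \emph{prelimit} Brownian motion \(\tilde W^N\). Lemma~\ref{lemma: brownian_motion} only establishes that \(\tilde W^N\) is a Brownian motion with respect to \(\tilde\cF^N\) (generated by the \(N\)-th tuple) and that \(\tilde W\) is a Brownian motion with respect to \(\tilde\cF\) (generated by the limit tuple); nothing in the construction makes \(\tilde\eta\) adapted to \(\tilde\cF^N\), nor \(\tilde W^N\) a martingale with respect to a filtration containing \(\sigma(\tilde\eta_u, u\le t)\). Consequently the stochastic integrals in your splitting are not a priori well-defined It\^o integrals, and the isometry estimates you invoke for them are not available. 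The same adaptedness obstruction reappears when you bound the error between \(\int\langle\tilde\eta_s,\cdot\rangle\,\d\tilde W^N_s\) and its piecewise-constant telescoped version. Repairing this requires an extra argument (e.g.\ showing \(\tilde W^N\) remains a Brownian motion with respect to the joint filtration of the \(N\)-th tuple and the limit tuple), which you do not supply.

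The paper sidesteps this entirely via the Hofmanov\'a--Seidler martingale-identification method: it defines \(\tilde Z_t\) as the left-hand side of~\eqref{eq: def_spde_weak_formulation} minus all terms except the common-noise integral, and shows that \(\tilde Z\), \(\tilde Z^2 - \int_0^\cdot |\langle\tilde\eta_s,\nu^{\mathrm T}\nabla\varphi\rangle|^2\,\d s\), and \(\tilde Z\,\tilde W - \int_0^\cdot \langle\tilde\eta_s,\nu^{\mathrm T}\nabla\varphi\rangle\,\d s\) are \((\tilde\cF_t)\)-martingales, first for the prelimit processes (where all computations take place on the original space with a common filtration) and then in the limit via~\cite[Proposition~1.17, p.~526]{Jacod2003} and~\cite[Lemma~67.10]{Rogers1994}. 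This identifies \(\tilde Z\) as indistinguishable from \(\int_0^\cdot\langle\tilde\eta_s,\nu_s^{\mathrm T}\nabla\varphi\rangle\,\d\tilde W_s\) without ever forming a stochastic integral of a limit integrand against a prelimit integrator. You should either adopt that route or add the filtration-enlargement step your direct approach needs.
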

\begin{proof}
The first four points \((1)-(4)\) are direct consequence of Proposition~\ref{prop: skorohod} Lemma~\ref{lemma: brownian_motion}, Lemma~\ref{lemma: solution_tilde}. The fifth property \((5)\) follows by Lemma~\ref{lemma: characterization_martingale}. It remains to demonstrate that the equality in the \((6)\)-th point of Definition~\ref{def: limiting_spde} holds. Here, we follow similar to~\cite{shao2025} the well know approach by Hofmanov\'a and Seidler~\cite{Hofmanova2012}. The idea is classical that instead of using the martingale representation Theorem~\cite[Theorem~3.4.2]{KaratzasIoannis2009Bmas} we can instead demonstrate that 
\begin{equation} \label{eq: theorem_aux1}
\tilde Z_{\cdot}, \quad \tilde Z^2_{\cdot}- \int\limits_0^\cdot |\langle \tilde \eta_s, \nu^{{\mathrm T}} \nabla \varphi \rangle|^2 \Id s , \quad \tilde Z_\cdot \tilde W_\cdot -  \int\limits_0^\cdot \langle \tilde \eta_s, \nu^{{\mathrm T}} \nabla \varphi \rangle \Id s 
\end{equation}
are martingales with respect to the filtration \((\tilde \cF_t, t \ge 0)\), with 
\begin{align*}
    \tilde Z_t :&=   \langle \tilde \eta_t, \varphi \rangle
    -  \langle \tilde \eta_0, \varphi \rangle -    \int\limits_0^t \langle \tilde \rho_s (k*\tilde \eta_s)  , \nabla \varphi \rangle + \langle \tilde \eta_s (k*\tilde \rho_s) , \nabla \varphi  \rangle   \Id t \\
    \quad \quad  & -  \frac{1}{2}\int\limits_0^t  \langle \tilde \eta_s , \mathrm{Tr} \big( (\sigma_s \sigma^{\mathrm{T}}_s + \nu_s \nu^{\mathrm{T}}_s ) \nabla^2 \varphi \big) \rangle \Id s - \tilde {\mathcal M}_t(\varphi)
\end{align*}
and \(\varphi \in \testfunctions{\R^d}\). 
Similar, for \(N\in \N\) we denote the processes \((Z_t^N, t \ge 0)\) by 

\begin{align*}
  Z_t^N  :&=   \langle \eta_t, \varphi \rangle
    -  \langle \eta_0, \varphi \rangle -    \int\limits_0^t \langle  \rho_s (k* \eta_s)  , \nabla \varphi \rangle + \langle  \eta_s (k* \rho_s) , \nabla \varphi  \rangle + \frac{1}{\sqrt N} \langle \eta_s^N k * \eta_s^N, \nabla \varphi \rangle   \Id t \\
    \quad \quad  & -  \frac{1}{2}\int\limits_0^t  \langle  \eta_s , \mathrm{Tr} \big( (\sigma_s \sigma^{\mathrm{T}}_s + \nu_s \nu^{\mathrm{T}}_s ) \nabla^2 \varphi \big) \rangle \Id s - {\mathcal M}_t^N (\varphi). 
\end{align*}

We denote the process \((\tilde Z_t^N, t \ge 0)\) similar to \((Z_t^N,t \ge 0)\) with \(\tilde \eta , \tilde \rho, \tilde{\mathcal M}\) replaced by \(\tilde \eta^N, \tilde \rho^N, \tilde{ \mathcal M^N}\). From the characterization of \((\eta^N_t, t \ge 0)\) is is clear that \((Z_t^N, t \ge 0)\) is a martingale with respect to \((\cF_t, t \ge 0)\), which coincides with \(\hat {\mathcal M}_t^N(\varphi)\). Since \(Z^N\) has the same distribution as \(\tilde Z^N\), we can follow with similar techniques as in Lemma~\ref{lemma: brownian_motion} to demonstrate that \((\tilde Z^N_t, t \ge 0)\) is a martingale with respect to \((\tilde \cG_t, t \ge 0)\). Additionally, we see that 
\begin{equation*}
    (Z^N_\cdot)^2 - \int\limits_0^\cdot |\langle \eta_s, \nu^{{\mathrm T}} \nabla \varphi \rangle|^2 \Id s , \quad  Z_\cdot^N  W_\cdot -  \int\limits_0^\cdot \langle  \eta_s, \nu^{{\mathrm T}} \nabla \varphi \rangle \Id s 
\end{equation*}
are martingales with respect to \((\cF_t,t \ge 0)\) by computing the quadratic variation and covariation of \((\hat{\mathcal M}_t^N, t \ge 0) \). Again, transferring the probability spaces by Proposition~\ref{prop: skorohod} with the technique in Lemma~\ref{lemma: brownian_motion} we find that 
\begin{equation*} 
\tilde Z_{\cdot}^N, \quad  (\tilde Z^N_{\cdot})^2- \int\limits_0^\cdot |\langle \tilde \eta_s^N, \nu^{{\mathrm T}} \nabla \varphi \rangle|^2 \Id s , \quad \tilde Z_{\cdot}^N \tilde W_{\cdot}^N -  \int\limits_0^\cdot \langle \tilde \eta_s^N, \nu^{{\mathrm T}} \nabla \varphi \rangle \Id s 
\end{equation*}
are martingales. See also~\cite[Proposition~4.7]{shao2025} for the technique of Lemma~\ref{lemma: brownian_motion} implemented. Applying~\cite[p.~526, Proposition ~1.17]{Jacod2003}, we find that the processes~\eqref{eq: theorem_aux1} are continuous martingales with respect to \((\tilde \cG_t, t \ge 0)\). Here we used the convergence results provided by Proposition~\ref{prop: skorohod} and Lemma~\ref{lemma: drift_convergence} to conclude that \((\tilde Z^N, N \in N) \) converges to \(\tilde Z\). 
 Utilizing~\cite[Lemma~67.10]{Rogers1994} the martingales~\eqref{eq: theorem_aux1} stay continuous martingales under the augmentation and, therefore, are martingales with respect to \((\tilde \cF_t,t \ge 0)\). 
 Now we are in the position to apply the observation by Hofmanov\'a and Seidler~\cite{Hofmanova2012}, namely that a square integrable martingale is indistinguishable from zero if its quadratic variation vanishes. Using the martingale properties established above, we can compute the quadratic variation of
\begin{equation*}
\tilde Z_\cdot - \int_0^\cdot \langle \tilde \eta_s, \nu_s^{\mathrm T} \nabla \varphi \rangle \, \Id \tilde W_s,
\end{equation*}
and this quadratic variation indeed vanishes.
 
In particular, we find \((\tilde Z_t, t \ge 0)\) is indistinguishable from 
\begin{equation*}
    \bigg( \int\limits_0^t \langle \tilde \eta_s, \nu^{\mathrm T}_s \nabla \varphi\rangle  \Id \tilde W_s, t \ge 0\bigg) . 
\end{equation*}
Consequently, the \((6)\)-th property of Definition~\ref{def: limiting_spde} holds and the Theorem is proven. 
\end{proof}

\subsection{Pathwise uniqueness of limiting SPDE}
In order to prove our main Theorem~\ref{theorem: main} it remains to demonstrate the uniqueness of our limiting SPDE~\eqref{eq: limiting_spde}. 

\begin{theorem}[Uniqueness of the SPDE~\eqref{eq: limiting_spde}] \label{theorem: strong-uniqu}
Given a probability space \((\Omega, \cF, \P)\) and a \(\tilde m\) dimensional Brownian motion \((W_t, t \ge 0)\), a stochastic process \((M_t, t \ge 0)\), a process \((\rho_t ,t \ge 0)\) solving the stochastic Fokker--Planck equation~\eqref{eq: chaotic_spde}. Then, pathwise uniqueness holds in the sense of Definition~\ref{def: pathwise_unique} in the space \(H^{-\alpha}(\R^d)\) for all \(a>  \alpha >  d/2+2\), where \(a\) is the constant in Assumption~\ref{ass: main}. 
\end{theorem}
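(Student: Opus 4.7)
The plan is to subtract two martingale solutions $\eta^1,\eta^2$ that share the same Brownian motion $W$, the same martingale $\mathcal{M}$, the same solution $\rho$ of the stochastic Fokker--Planck equation~\eqref{eq: chaotic_spde}, and the same initial datum $\eta_0$, and then to show via an energy estimate in $H^{-\alpha}(\R^d)$ that the difference $\delta\eta_t := \eta_t^1-\eta_t^2$ vanishes identically on $[0,T]$. Because the SPDE~\eqref{eq: limiting_spde} is linear in $\eta$ and the source terms $\eta_0$ and $\mathcal{M}$ cancel between $\eta^1$ and $\eta^2$, the process $\delta\eta$ satisfies the homogeneous linear SPDE
\begin{align*}
\Id \delta\eta_t
=&\; -\nabla\cdot(\delta\eta_t (k*\rho_t))\Id t - \nabla\cdot(\rho_t(k*\delta\eta_t))\Id t\\
&\; + \tfrac12\sum_{i,j=1}^d \partial_i\partial_j\bigl(([\sigma_t\sigma_t^{\mathrm T}]_{(i,j)}+[\nu_t\nu_t^{\mathrm T}]_{(i,j)})\delta\eta_t\bigr)\Id t -\nabla\cdot(\delta\eta_t\nu_t^{\mathrm T}\Id W_t),
\end{align*}
with $\delta\eta_0=0$, so uniqueness reduces to proving $\E\sup_{t\in[0,T]}\|\delta\eta_t\|_{H^{-\alpha}}^2 =0$.

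The next step is to apply It\^o's formula in the Hilbert space $H^{-\alpha}(\R^d)$ to the squared norm $\|\delta\eta_t\|_{H^{-\alpha}}^2$. Since the drift produces only $H^{-\alpha-2}(\R^d)$-valued objects, I would first regularize by convolving $\delta\eta$ with a standard mollifier $\chi_\epsilon$, apply the classical $L^2$-It\^o formula to $\|J^{-\alpha}(\delta\eta_t*\chi_\epsilon)\|_{L^2}^2$, and then pass to the limit $\epsilon\to 0$ using commutator estimates of DiPerna--Lions / Kato--Ponce type together with the high regularity of $\rho$, $\sigma$, $\nu$ (Assumption~\ref{ass: main} and Remark~\ref{remark: continuity_of_spde}). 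This yields an identity
\begin{equation*}
 \Id \|\delta\eta_t\|_{H^{-\alpha}}^2 = 2\,\langle \delta\eta_t, L_t\delta\eta_t\rangle_{H^{-\alpha}}\Id t + \sum_{\tilde l=1}^{\tilde m}\|B_t^{\tilde l}\delta\eta_t\|_{H^{-\alpha}}^2\Id t + \Id N_t,
\end{equation*}
where $L_t$ is the deterministic drift generator, $B_t^{\tilde l}\delta\eta := -\sum_{j}\partial_j(\nu_t^{j,\tilde l}\delta\eta)$, and $(N_t)$ is a continuous local martingale.

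The heart of the argument is to bound the deterministic integrand by $C(1+\|\rho_t\|_{H^a}^2)\|\delta\eta_t\|_{H^{-\alpha}}^2$. For the two interaction terms I would integrate by parts once inside $H^{-\alpha}$ and combine the Kato--Ponce commutator inequality with the pointwise multiplier theorem (Theorem~\ref{theorem: holder}) and Young's inequality (Lemma~\ref{lemma: young}) to obtain
\begin{align*}
|\langle \nabla\cdot(\delta\eta_t(k*\rho_t)), \delta\eta_t\rangle_{H^{-\alpha}}|
& \lesssim \|k*\rho_t\|_{B^{a}_{\infty,\infty}}\|\delta\eta_t\|_{H^{-\alpha}}^2 \lesssim \|k\|_{L^2}\|\rho_t\|_{H^{a}}\|\delta\eta_t\|_{H^{-\alpha}}^2,\\
|\langle \nabla\cdot(\rho_t(k*\delta\eta_t)), \delta\eta_t\rangle_{H^{-\alpha}}|
& \lesssim \|\rho_t\|_{B^{a}_{\infty,\infty}}\|k\|_{L^2}\|\delta\eta_t\|_{H^{-\alpha}}^2,
\end{align*}
the essential point being that in each transport term the extra derivative is absorbed by the commutator with $J^{-\alpha}$, so no net derivative loss occurs. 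For the pair ($\nu\nu^{\mathrm T}$-drift, quadratic variation) I would exploit the It\^o--Stratonovich cancellation: writing $u_t := J^{-\alpha}\delta\eta_t\in L^2(\R^d)$ and integrating by parts, the principal terms $\nu^{i,\tilde l}\nu^{j,\tilde l}\partial_i u_t\partial_j u_t$ appearing with opposite signs in $2\langle \delta\eta_t, \tfrac12\sum_{ij}\partial_{ij}([\nu\nu^{\mathrm T}]_{ij}\delta\eta_t)\rangle_{H^{-\alpha}}$ and in $\sum_{\tilde l}\|B_t^{\tilde l}\delta\eta_t\|_{H^{-\alpha}}^2$ cancel exactly, leaving only lower-order contributions involving $\partial\nu$ and commutators $[J^{-\alpha},\nu^{j,\tilde l}]$ that are bounded by $C\|\nu\|_{B^a_{\infty,\infty}}^2\|\delta\eta_t\|_{H^{-\alpha}}^2$. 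The $\sigma\sigma^{\mathrm T}$ contribution, lacking a matching stochastic integral, is handled directly by a double integration by parts in $H^{-\alpha}$ and absorbed into $C\|\delta\eta_t\|_{H^{-\alpha}}^2$ via the same multiplier bounds.

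Combining these controls, taking expectation to kill $N_t$, and applying Gr\"onwall's lemma with the integrable majorant $t\mapsto C(1+\|\rho_t\|_{H^a}^2)$ (whose integrability follows from Remark~\ref{remark: continuity_of_spde}) yields $\E\|\delta\eta_t\|_{H^{-\alpha}}^2=0$ for every $t\in[0,T]$; a standard Doob / continuity upgrade then promotes this to $\sup_{t\in[0,T]}\|\delta\eta_t\|_{H^{-\alpha}}=0$ almost surely. The main obstacle I anticipate is the rigorous justification of It\^o's formula in $H^{-\alpha}$ together with the commutator estimates that control the transport-type drifts without derivative loss; the It\^o--Stratonovich cancellation between the $\nu\nu^{\mathrm T}$-second-order drift and the common-noise quadratic variation is indispensable here, since without it the $H^{-\alpha+1}$-norm of $\delta\eta_t$ would appear on the right-hand side and preclude Gr\"onwall's lemma.
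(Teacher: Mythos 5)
Your overall strategy differs from the paper's: the paper does not run the energy estimate by hand but instead, after reducing to the homogeneous linear SPDE for $\delta\eta$ exactly as you do, localizes with the stopping time $\tau^M=\inf\{t:\|\rho_t\|_{H^{a-1}(\R^d)}\ge M\}\wedge T$ and invokes Krylov's abstract well-posedness theorem \cite[Theorem~5.1]{Krylov1999AnAA}, whose only nontrivial hypothesis is the Lipschitz bound $\|f(t,v)-f(t,u)\|_{H^{-\alpha}}\le CM\|k\|_{L^2}\|v-u\|_{H^{-\alpha+1}}$, verified via Theorem~\ref{theorem: holder} and Lemma~\ref{lemma: young}. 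Note that this bound loses one derivative ($H^{-\alpha+1}\to H^{-\alpha}$), which is permitted in Krylov's parabolic framework precisely because the ellipticity of $\sigma$ (Assumption~\ref{ass: main}(\ref{item: ellipticity_idiosy})) supplies the coercivity that absorbs it.

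This points to the genuine gaps in your proposal. First, your claim that the $\sigma\sigma^{\mathrm T}$ second-order drift is ``handled directly by a double integration by parts and absorbed into $C\|\delta\eta_t\|_{H^{-\alpha}}^2$ via the same multiplier bounds'' is false as stated: a second-order term cannot be bounded by the zeroth-order norm. Its principal part $-\sum_{ij}\int[\sigma\sigma^{\mathrm T}]_{ij}\partial_i u\,\partial_j u$ (with $u=J^{-\alpha}\delta\eta$) must instead be kept as a \emph{negative definite} term, $\le -\delta\|\nabla u\|_{L^2}^2$, using the ellipticity assumption — which you never invoke. Second, the bound you assert for $\nabla\cdot(\rho_t(k*\delta\eta_t))$ with ``no net derivative loss'' is not justified: there is no commutator structure here (the derivative does not fall on a coefficient), and the best available estimate — the one the paper itself proves — is $\|\nabla\cdot(\rho_t(k*\delta\eta_t))\|_{H^{-\alpha}}\lesssim\|\rho_t\|_{H^{a-1}}\|k\|_{L^2}\|\delta\eta_t\|_{H^{-\alpha+1}}$, so an $H^{-\alpha+1}$-norm necessarily appears and must be absorbed, via Young's inequality, into the $-\delta\|\nabla u\|_{L^2}^2$ term. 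So your closing remark that the It\^o--Stratonovich cancellation alone prevents $\|\delta\eta_t\|_{H^{-\alpha+1}}$ from appearing is wrong: that cancellation handles only the $\nu$-terms, while the $\sigma\sigma^{\mathrm T}$-term and the nonlocal drift require the coercivity from Assumption~\ref{ass: main}(\ref{item: ellipticity_idiosy}). Third, your Gr\"onwall step is not licit as written: the coefficient $C(1+\|\rho_t\|_{H^{a-1}}^2)$ is random, so after taking expectations you cannot factor it out of $\E\big[(1+\|\rho_t\|_{H^{a-1}}^2)\|\delta\eta_t\|_{H^{-\alpha}}^2\big]$; you need either a stochastic Gr\"onwall lemma or the stopping-time localization the paper uses (which also explains why the paper concludes only on $[0,\tau^M]$ and then lets $M\to\infty$ using the continuity of $t\mapsto\|\rho_t\|_{H^{a-1}}$ from Remark~\ref{remark: continuity_of_spde}). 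With these three repairs your direct energy argument would essentially reconstruct the proof of Krylov's theorem in this special case, but as written the key estimates would fail.
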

\begin{proof}
    Let \(\eta^1,\eta^2\) be two solution to the SPDE~\eqref{eq: limiting_spde}. Then, \(\eta = \eta^1-\eta^2\) solves the equation 
    \begin{align*}
        \partial_t \eta =& -  \nabla \cdot (\eta_t (k*\rho_t) ) - \nabla \cdot (\rho_t (k*\eta_t) )   - \nabla \cdot (\eta  \nu^{\mathrm{T}} \Id W_t ) \\
& \quad
+  \frac{1}{2}  \sum\limits_{\alpha,\beta=1}^d \partial_{z_{\alpha}}\partial_{z_{\beta}} \bigg( ( [\sigma_t\sigma_t^{\mathrm{T}}]_{(\alpha,\beta)} + [\nu_t \nu_t^{\mathrm{T}}]_{(\alpha,\beta)} ) \eta_t \bigg)  \Id t 
    \end{align*}
    with initial condition \(\eta_0 =0\). Notice that \(\tau^M\) is indeed a stopping time by Remark~\ref{remark: continuity_of_spde}. 
    Our goal is to apply~\cite[Theorem~5.1]{Krylov1999AnAA} with the following stopping time
    \begin{equation*}
        \tau^M(\omega):= \min \bigg( \inf \{  t \ge 0 \colon \norm{\rho_t}_{H^{a-1}(\R^d)} \ge M \}, T \bigg) 
    \end{equation*}
    for \(a > \alpha+1\). 
    Since, the above SPDE is linear we can just concentrate on the drift part
    \begin{equation*}
        f(t,v,\cdot) = - \nabla \cdot (v (k*\rho_t) ) - \nabla \cdot (\rho_t (k*v) ).
    \end{equation*}
We have 
\begin{align*}
    &\norm{f(t,v,\cdot)-f(t,u,\cdot)}_{H^{-\alpha}(\R^d)}\\
    &\quad \le C  \norm{(v-u)k*\rho_t}_{H^{-\alpha+1}(\R^d)}  + \norm{\rho_t k*(v-u)}_{H^{-\alpha+1}(\R^d)} \\
     &\quad\le C  \norm{v-u}_{H^{-\alpha+1}(\R^d)}  \norm{k}_{L^2(\R^d)} \norm{\rho_t}_{H^{a-1}(\R^d)}  + \norm{k*(v-u)}_{B_{\infty,2}^{-\alpha+1}(\R^d)} \norm{\rho_t}_{H^{a-1}(\R^d)} \\
     &\quad\le C  \norm{\rho_t}_{H^{a-1}(\R^d)} \norm{k}_{L^2(\R^d)}  \norm{v-u}_{H^{-\alpha+1}(\R^d)} \\
     &\quad\le C M \norm{k}_{L^2(\R^d)}  \norm{v-u}_{H^{-\alpha+1}(\R^d)}, 
\end{align*}
where we used the multiplication inequality in Theorem~\ref{theorem: holder} and the fact that \(\alpha < a \). 
This confirms the main assumption in~\cite[Theorem~5.1]{Krylov1999AnAA}. All other assumptions of~\cite[Theorem~5.1]{Krylov1999AnAA} readily follow from the assumptions on the coefficients. Consequently, there exists a unique solution in the space \(L^2_{\cF^{W}}([0,T];H^{-\alpha+2}(\R^d))\), which is continuous in the space \(H^{-\alpha +1}(\R^d)\), i.e. \(C([0,T],H^{-\alpha+1}(\R^d))\). However, the trivial solution also solves the SPDE~\eqref{eq: limiting_spde} and we obtain \[\P\Big(\sup\limits_{0 \le t \le \tau^M  } \norm{\eta_t}_{H^{-\alpha+1}(\R^d)} = 0\Big) = 1\] on the time interval \([0,\tau^M]\) for all \(M \in \N\). But the map \(t \mapsto \norm{\rho_t}_{H^{a-1}(\R^d)}\) is continuous by Remark~\ref{remark: continuity_of_spde}. Therefore, \(\tau^M \to T, \; \P\)-a.e. and the theorem is proven. 
\end{proof}

\begin{proof}[Proof of Theorem~\ref{theorem: main}]
    By Theorem~\ref{theorem: weak_existence} there exists a weak solution of the limiting fluctuation SPDE~\eqref{eq: limiting_spde}. By Theorem~\ref{theorem: strong-uniqu} strong uniqueness holds for the fluctuation SPDE~\eqref{eq: limiting_spde}. Applying the general Yamada--Watanabe Theorem~\cite[Theorem~1.5]{Kurtz2014} we conclude the strong well-posedness of SPDE~\eqref{eq: limiting_spde}. In particular this implies uniqueness in law. Hence, by Lemma~\ref{lemma: tighntess_fluctuation_sequence} we can find a subsequence of \((\eta^N, N \in \N)\), which converges to the unique solution of the SPDE~\eqref{eq: limiting_spde}. Since this holds for every subsequence and the law of the limiting point is unique~\cite[Theorem~1.5]{Kurtz2014} we obtain that the whole sequence \((\eta^N, N \in \N)\) converges in law to the fluctuation SPDE~\eqref{eq: limiting_spde}. 
\end{proof}

\begin{appendices}

\section{Besov spaces}\label{secA1}

We recall the multiplication inequalities for Besov spaces~\cite[Corollary~1 and Corollary~2]{Weber2017} and~\cite[Theorem 2.8.2]{Triebel1978}.
\begin{theorem}[H\"older's inequality for Triebel--Lizorkin spaces] \label{theorem: holder}
Let \(s_1 > 0 > s_2\), \(p,q,p_1,p_2 \in [1, \infty]\) such that 
\begin{equation*}
    \frac{1}{p} = \frac{1}{p_1}+\frac{1}{p_2}.
\end{equation*}
Then, the map \((f,g) \mapsto fg\) extends to a continuous linear map from \(B_{p_1,q}^{s_1}(\R^d) \times B_{p_2,q}^{s_1}(\R^d) \) to \(B_{p,q}^{s_1}(\R^d)\) and 
\begin{equation*}
    \norm{fg}_{B_{p,q}^{s_1}(\R^d)} \le C \norm{f}_{B_{p_1,q}^{s_1}(\R^d)}
    \norm{g}_{B_{p_2,q}^{s_1}(\R^d)}.
\end{equation*}
If, in addition \(s_1+s_2 > 0\), then the map \((f,g) \mapsto fg\) extends to a continuous linear map from \(B_{p_1,q}^{s_1}(\R^d) \times B_{p_2,q}^{s_2}(\R^d) \) to \(B_{p,q}^{s_2}(\R^d)\) and 
\begin{equation*}
    \norm{fg}_{B_{p,q}^{s_2}(\R^d)} \le C \norm{f}_{B_{p_1,q}^{s_1}(\R^d)}
    \norm{g}_{B_{p_2,q}^{s_2}(\R^d)}.
\end{equation*}
 Moreover, for \(s \in \R\), \(0 < p <\infty\), \(0 < q \le \infty\) and \(a>\max (s,\frac{d}{p}-s) \), we have the inequality 
\begin{equation*}
    \norm{f g}_{B^{s}_{p,q}(\R^d)} \le \norm{f}_{B^{a}_{\infty,\infty}(\R^d)} \norm{g}_{B^{s}_{p,q}(\R^d)}. 
\end{equation*}
\end{theorem}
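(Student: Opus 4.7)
The plan is to prove all three inequalities by a unified tool, Bony's paraproduct decomposition
\[
fg = T_f g + T_g f + R(f,g),
\]
where
\[
T_f g := \sum_{j \ge 1} S_{j-1} f \, \Delta_j g, \qquad R(f,g) := \sum_{|i-j| \le 1} \Delta_i f \, \Delta_j g,
\]
and to estimate each summand separately via Littlewood--Paley analysis, exploiting the facts that \(S_{j-1} f\,\Delta_j g\) has Fourier support in an annulus of radius comparable to \(2^j\), while \(\Delta_i f \, \Delta_j g\) has Fourier support in a ball of radius \(\le C\, 2^{\max(i,j)}\). Throughout I write \(a_i := 2^{is_1}\|\Delta_i f\|_{L^{p_1}}\) and \(b_i := 2^{is_2}\|\Delta_i g\|_{L^{p_2}}\), which lie in \(\ell^q\) with norms equal to the Besov norms of \(f\) and \(g\).

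For the paraproduct \(T_f g\), frequency localization reduces \(\Delta_k(T_f g)\), up to finite overlap, to \(S_{k-1} f \cdot \Delta_k g\), so H\"older's inequality with \(1/p = 1/p_1 + 1/p_2\) gives \(\|\Delta_k(T_f g)\|_{L^p(\R^d)} \le \|S_{k-1}f\|_{L^{p_1}(\R^d)} \|\Delta_k g\|_{L^{p_2}(\R^d)}\). When \(s_1 > 0\), the partial sum \(S_{k-1}f\) is controlled uniformly in \(k\) by summing the convergent geometric series \(\sum_{j < k} 2^{-j s_1}\) against \(a_j\); taking the \(\ell^q\)-norm of \(2^{k s_1}\|\Delta_k(T_f g)\|_{L^p(\R^d)}\) then yields the bound on \(\|T_f g\|_{B^{s_1}_{p,q}(\R^d)}\). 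The symmetric term \(T_g f\) is handled identically in the first inequality. In the second inequality, \(g\) has negative regularity \(s_2 < 0\), so \(\|S_{k-1}g\|_{L^{p_2}(\R^d)} \le C\, 2^{-k s_2}\|g\|_{B^{s_2}_{p_2,q}(\R^d)}\); the factor \(2^{-k s_2}\) cancels the weight \(2^{k s_2}\) and leaves \(\|\Delta_k f\|_{L^{p_1}(\R^d)}\), which is \(\ell^q\)-summable via the embedding \(B^{s_1}_{p_1,q}(\R^d) \hookrightarrow B^0_{p_1,q}(\R^d)\) ensured by \(s_1 > 0\).

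The resonance \(R(f,g)\) requires the hypothesis \(s_1+s_2 > 0\) (respectively \(2s_1 > 0\) in the first inequality). Since \(\Delta_i f \, \Delta_{i+m} g\) with \(m \in \{-1,0,1\}\) has Fourier support in a ball of radius \(\le C\, 2^i\), \(\Delta_k R(f,g)\) vanishes unless \(i \ge k - C\), so
\[
2^{k(s_1+s_2)} \|\Delta_k R(f,g)\|_{L^p(\R^d)} \le C \sum_{i \ge k - C} 2^{(k-i)(s_1+s_2)} \, a_i \, b_{i+m},
\]
which is a discrete convolution of the \(\ell^1\)-kernel \(2^{-\ell(s_1+s_2)}\mathbf{1}_{\ell \ge -C}\) with the \(\ell^{q/2}\)-sequence \((a_i b_i)\). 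Young's convolution inequality yields \(R(f,g) \in B^{s_1+s_2}_{p,q}(\R^d)\), and the Besov embedding \(B^{s_1+s_2}_{p,q}(\R^d) \hookrightarrow B^s_{p,q}(\R^d)\) for \(s \in \{s_1,s_2\}\) (valid since \(s_1+s_2 > s\) in both relevant cases) delivers the required target norm.

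The third inequality follows the same skeleton with \(f \in B^a_{\infty,\infty}(\R^d)\). The condition \(a > s\) ensures both that \(\|S_{k-1} f\|_{L^\infty(\R^d)} \le C\|f\|_{B^a_{\infty,\infty}(\R^d)}\) and that the loss \(2^{-ka}\) coming from \(T_g f\) is absorbed by the weight \(2^{ks}\). The condition \(a > d/p - s\) controls the resonance via Bernstein's inequality: raising \(\Delta_i f \, \Delta_j g\) from its natural space to \(L^p(\R^d)\) with only \(L^\infty\)-type control on \(f\) costs a factor \(2^{id/p}\), which is absorbed precisely when \(a + s > d/p\). The main technical obstacle is not the dyadic algebra but two auxiliary points: first, handling the endpoint exponents \(p,q \in \{1,\infty\}\), where the \(\ell^q\)-norms become essential suprema and the paraproduct/resonance estimates require adjusted conventions (see \cite[Section~2.8]{Triebel1978}); and second, justifying the extension of the bilinear estimate from \(\mathcal{S}(\R^d) \times \mathcal{S}(\R^d)\) to the full Besov spaces through a density or duality argument.
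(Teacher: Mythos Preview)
The paper does not prove this theorem at all: it is stated in the appendix as a recollection of known results, with the first two inequalities attributed to \cite[Corollary~1 and Corollary~2]{Weber2017} and the third to \cite[Theorem~2.8.2]{Triebel1978}. There is therefore no ``paper's own proof'' to compare against. Your paraproduct approach is precisely the standard machinery behind those references, and for the first two inequalities your sketch is correct: the paraproducts \(T_f g,\,T_g f\) are controlled using \(s_1>0\) (respectively \(s_2<0\)) to bound the low-frequency factor, and the resonance lands in \(B^{s_1+s_2}_{p,q}\) when \(s_1+s_2>0\), which embeds into the target because the fine index is irrelevant once regularity strictly drops.

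One point deserves tightening. In the third inequality you attribute the condition \(a>d/p-s\) to a Bernstein cost of \(2^{id/p}\) incurred by ``raising \(\Delta_i f\,\Delta_j g\) to \(L^p\)''. But no such raising is needed when \(p\ge 1\): since \(\Delta_i f\in L^\infty\) and \(\Delta_j g\in L^p\), the product is already in \(L^p\), and the resonance argument only requires \(a+s>0\). The genuine role of the \(d/p\) term is in the quasi-Banach range \(0<p<1\), where the \(L^p\)-triangle inequality fails and one must instead use the Nikol'skii--Plancherel--Polya inequality on the spectrally localized pieces; this is where the factor \(\sigma_p=d(1/p-1)_+\) (hence \(d/p\) up to constants) enters in Triebel's proof. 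Your outline is otherwise sound, but this step should be rewritten if you want the argument to cover the full range \(0<p<\infty\) claimed in the statement.
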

Next, we also require Young's inequality~{\cite[Theorem 2.]{Schilling2022}}. 

\begin{lemma}[Young's inequality for Besov spaces] \label{lemma: young}
Let \( s \in \mathbb{R} \), \( q,q_1 \in (0, \infty] \), and \( p, p_1, p_2 \in [1, \infty] \) be such that:
\[
1 + \frac{1}{p} = \frac{1}{p_1} + \frac{1}{p_2} \quad \mathrm{and} \quad \frac{1}{q} \le \frac{1}{q_1} + \frac{1}{2}.
\]
If \( f \in B^{s}_{p_1,q}(\R^d) \) and \( g \in L_{p_2}(\R^d) \), then \( f * g \in B^{s}_{p,q}(\R^d) \) and
\[
\|f * g\|_{B^{s}_{p,q}(\R^d)} \leq C \|f\|_{B^{s}_{p_1,q_1}(\R^d)} \cdot \|g\|_{L^{p_2}(\R^d)},
\]
where \( C > 0 \) is a constant independent of \( f \) and \( g \). 
\end{lemma}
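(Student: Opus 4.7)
The proof proceeds via the Littlewood--Paley characterization of Besov spaces together with the classical Young convolution inequality for Lebesgue spaces. The starting observation is the frequency localization of the convolution: since $\mathcal F(\Delta_j f \ast g) = \chi_j \cdot \mathcal F(f) \cdot \mathcal F(g)$ is supported in the annulus $\{|\xi| \sim 2^j\}$, there is a universal constant $N_0 \in \N$ depending only on the dyadic partition $(\tilde \chi, \chi)$ such that
\begin{equation*}
    \Delta_k(\Delta_j f \ast g) = 0 \qquad \text{whenever} \quad |j - k| > N_0.
\end{equation*}
This collapses the Littlewood--Paley expansion of the convolution to a finite near-diagonal sum
$\Delta_k(f \ast g) = \sum_{|j-k| \le N_0} \Delta_k(\Delta_j f \ast g)$.

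For each block, the classical Young inequality (valid under $1 + 1/p = 1/p_1 + 1/p_2$), combined with the uniform $L^p$-boundedness of the projectors $\Delta_k$ (their kernels $\mathcal F^{-1}(\chi_k)$ are rescalings of a fixed Schwartz function and have $L^1$-norms bounded independently of $k$), delivers the block-level estimate
\begin{equation*}
    \|\Delta_k(f \ast g)\|_{L^p(\R^d)}
    \le C \|g\|_{L^{p_2}(\R^d)} \sum_{|j-k| \le N_0} \|\Delta_j f\|_{L^{p_1}(\R^d)}.
\end{equation*}
Multiplying by $2^{sk}$, bounding $2^{s|k-j|}$ by $2^{|s| N_0}$ across the finite near-diagonal sum, and applying the (quasi-)triangle inequality for $\ell^q$ in $k$ yields the diagonal-exponent version of the claim with $q_1 = q$. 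The regime $q_1 \le q$ is then absorbed for free into the same bound via the continuous embedding $\ell^{q_1}(\mathbb Z) \hookrightarrow \ell^q(\mathbb Z)$, so only one regime remains to be handled.

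The main obstacle is the remaining regime $q_1 > q$, which is covered by the hypothesis $1/q \le 1/q_1 + 1/2$ but is not accessible by the naive block-by-block argument above, because the embedding of sequence spaces now goes in the wrong direction. To close this case one must exploit an additional $\ell^2$-type almost-orthogonality of the Littlewood--Paley decomposition under convolution with an $L^{p_2}$-kernel, gained through Plancherel on the $L^2$-side. The quantitative closure is then achieved by Riesz--Thorin interpolation between the trivial endpoint $p_2 = 1$ (where the previous Young-type estimate with no exponent shift already suffices) and the Plancherel endpoint $p_2 = 2$ (which furnishes the $1/2$ shift in the summability exponent). This is precisely the refinement carried out in \cite{Schilling2022}, which I would invoke to conclude the sharp condition on the secondary exponent $q_1$; the remainder of the proof is the purely mechanical combination of the three steps above.
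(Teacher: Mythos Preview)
The paper does not prove this lemma at all: it is stated in the Appendix with the preamble ``Next, we also require Young's inequality~\cite[Theorem 2.]{Schilling2022}'' and no further argument. Your proposal is therefore strictly more than what the paper provides, and since you ultimately invoke the same reference for the non-trivial regime, there is no discrepancy to discuss.

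One small remark on your sketch: the near-diagonal collapse $\Delta_k(\Delta_j f\ast g)=0$ for $|j-k|>N_0$ is correct but unnecessary, because convolution commutes with Fourier multipliers, so $\Delta_k(f\ast g)=(\Delta_k f)\ast g$ holds exactly. This lets you skip the double sum and apply Young's inequality directly to each block, yielding the $q_1=q$ case in one line; the rest of your outline (monotone $\ell^q$ embedding for $q_1\le q$, and the interpolation argument from the cited reference for $q_1>q$) stands as written.
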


\section{Auxiliary Lemmata}\label{secB}
In this section we proof Lemma~\ref{lemma: versions_of_stochastic_integrals}, Lemma~\ref{lemma: modification_super}, a version of a conditional cancellation property.  
Let us recall~\cite[Lemma~5]{Flandoli2005}. 
\begin{lemma} \label{lem: appendix_aux}
    Let \( \varphi \mapsto S(\varphi) \) be a linear continuous mapping from a separable Banach space \( E \) to \( L^0(\Omega) \) (with the convergence in probability). Assume that there exists a random variable \( C(\omega) \) such that for all \( \varphi \in E \), we have  
\[
|S(\varphi)(\omega)| \leq C(\omega) \|\varphi\|_E \quad \text{for } \mathbb{P}\text{-a.e. } \omega \in \Omega.
\]  
Then there exists a measurable mapping \( \omega \mapsto S(\omega) \) from \( (\Omega, \cF, \mathbb{P}) \) to the dual \( E' \) such that for all \( \varphi \in E \), we have  
\[
\langle S(\omega), \varphi \rangle = S(\varphi)(\omega),
\]  
hence \( S(\omega) \) is a pathwise realization of \( S(\varphi) \).

\end{lemma}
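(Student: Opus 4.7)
The plan is to exploit the separability of $E$ to collapse the uncountable family of $\varphi$-dependent null sets into a single null set, construct $S(\omega)$ pointwise outside this set by density and extension, and then obtain measurability from the identity $\langle S(\omega), \varphi \rangle = S(\varphi)(\omega)$.

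First, I would fix a countable dense subset $\{\varphi_n\}_{n \in \N}$ of $E$ and let $D$ denote its $\mathbb{Q}$-linear span, which is countable and dense in $E$. For each $\varphi \in D$ the hypothesis gives a null set off which $|S(\varphi)(\omega)| \le C(\omega) \|\varphi\|_E$; likewise, for each pair $(\varphi, \psi) \in D \times D$ and each pair of rationals $(q,r)$, the $L^0$-linearity of $S$ gives a null set off which $S(q\varphi + r\psi)(\omega) = q S(\varphi)(\omega) + r S(\psi)(\omega)$. The union $N$ of this countable family is again null, and for every $\omega \notin N$ the restriction $\varphi \mapsto S(\varphi)(\omega)$ is $\mathbb{Q}$-linear on $D$ and satisfies the uniform bound $|S(\varphi)(\omega)| \le C(\omega) \|\varphi\|_E$.

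Next, for $\omega \notin N$ I would extend this restriction by uniform continuity from $D$ to a bounded $\R$-linear functional $S(\omega) \in E'$ with $\|S(\omega)\|_{E'} \le C(\omega)$, and set $S(\omega) := 0$ on $N$. To verify the identity $\langle S(\omega), \varphi \rangle = S(\varphi)(\omega)$ for an arbitrary $\varphi \in E$, pick a sequence $(\varphi_k) \subset D$ with $\varphi_k \to \varphi$ in $E$: by the continuity of $S$ one has $S(\varphi_k) \to S(\varphi)$ in probability, while by construction $\langle S(\omega), \varphi_k \rangle \to \langle S(\omega), \varphi \rangle$ uniformly for $\omega \notin N$; passing to a further almost surely convergent subsequence identifies the two.

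For measurability, note that for each fixed $\varphi \in E$ the scalar map $\omega \mapsto \langle S(\omega), \varphi \rangle$ coincides a.e.\ with the random variable $S(\varphi)$ and is therefore measurable, so $\omega \mapsto S(\omega)$ is weakly measurable into $E'$. In the application of interest $E = H^{\alpha}(\R^d)$, so $E'$ is a separable Hilbert space, and Pettis' measurability theorem promotes weak measurability to Borel measurability. The only real delicacy is the one already highlighted in the statement: the a.e.\ bound produces a null set depending on $\varphi$, and the entire argument hinges on using separability of $E$ to replace this uncountable family of exceptional sets by a single null set before performing the continuous extension.
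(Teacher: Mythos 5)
The paper does not prove this lemma at all — it is quoted verbatim from the literature (cited as Lemma~5 of Flandoli's work) and used as a black box — so there is no internal proof to compare against. Your argument is the standard and correct one for this statement: separability lets you collapse the $\varphi$-dependent null sets coming from the a.e.\ bound and from the $L^0$-linearity into a single null set $N$ over a countable $\mathbb{Q}$-linear dense subset $D$, off which you get a genuine $\mathbb{Q}$-linear, $C(\omega)$-Lipschitz functional on $D$ that extends to an element of $E'$; the identification $\langle S(\omega),\varphi\rangle = S(\varphi)(\omega)$ for general $\varphi$ via convergence in probability plus an a.s.\ convergent subsequence is exactly right, and you correctly flag that this identity can only hold up to a $\varphi$-dependent null set. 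Two cosmetic points: the convergence $\langle S(\omega),\varphi_k\rangle \to \langle S(\omega),\varphi\rangle$ is pointwise in $\omega$ (controlled by $C(\omega)\|\varphi_k-\varphi\|_E$), not uniform, but pointwise is all you use; and your measurability step establishes weak-$*$ measurability, which upgrades to strong (Bochner) measurability via Pettis precisely because in the application $E=H^{\alpha}(\R^d)$ is a separable Hilbert space, so $E'$ is separable and reflexivity identifies weak and weak-$*$ measurability — for a general separable Banach space one would have to settle for weak-$*$ measurability, which is in fact how the cited lemma is meant to be read.
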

\begin{proof}[Proof of Lemma~\ref{lemma: versions_of_stochastic_integrals}]

For any \(j,\tilde l\) we have 
\begin{equation*}
    \E \bigg( \int\limits_0^t \norm{ \nu^{j,\tilde{l}}(s,\cdot)  \eta_s^N}_{H^{-(\alpha-1)}(\R^d)}^2 \Id s \bigg)   
    \le C  \E \bigg( \int\limits_0^t \norm{ \eta_s^N}_{H^{-(\alpha-1)}(\R^d)}^2 \Id s \bigg) < \infty,
\end{equation*}
where we utilized Assumption~\ref{ass: main} and Lemma~\ref{lemma: uniform_estimate} with the fact that \(\alpha - 1 > d/2\). 
Hence, we can interchange the integral and the linear map to obtain
    \begin{align*}
       | \hat{\mathcal M}_t^N(\varphi)|
       &=  \bigg|\sum\limits_{j=1}^d \sum\limits_{\tilde{l}=1}^{\tilde{m}} \int\limits_0^t   \langle \nu^{j,\tilde{l}}(s,\cdot)  \partial_{z_j} \varphi(\cdot), \eta_s^N \rangle  \Id W_s^{\tilde{l}} \bigg| \\
       &= \bigg|\sum\limits_{j=1}^d \sum\limits_{\tilde{l}=1}^{\tilde{m}} \bigg \langle \int\limits_0^t   \nu^{j,\tilde{l}}(s,\cdot)   \eta_s^N  \Id W_s^{\tilde{l}}  , \partial_{z_j} \varphi(\cdot) \bigg\rangle \bigg| \\
       &\le \sum\limits_{j=1}^d \sum\limits_{\tilde{l}=1}^{\tilde{m}} \norm{\int\limits_0^t   \nu^{j,\tilde{l}}(s,\cdot)   \eta_s^N  \Id W_s^{\tilde{l}}}_{H^{-(\alpha-1)}(\R^d)} \norm{\varphi}_{H^\alpha(\R^d)}.
    \end{align*}
    By the first inequality and the Burkholder-Davis-Gundy (BDG) inequality for Hilbert space-valued martingales~\cite[Theorem~1.1]{Marinelli2016}, we conclude that the random variable is integrable. Hence, Lemma~\ref{lem: appendix_aux} is applicable. 
    For the integral \(M_t^N\) we use~\cite[Lemma~8]{Flandoli2005} and Parseval's identity to obtain 
    \begin{align*}
        &\bigg|\frac{1}{\sqrt{N}} \sum\limits_{i=1}^N \int\limits_0^t  ( \sigma^{\mathrm{T}}(s,X_s^{i}) \nabla \varphi(X_s^{i}) ) \Id B_s^{i}\bigg| \\
        &\quad \le \frac{1}{\sqrt{N}}  \sum\limits_{i=1}^N  \sum\limits_{j=1}^d \sum\limits_{l=1}^m  \bigg|\int\limits_0^t   \sigma^{j,l}(s,X_s^{i}) \partial_{x_j} \varphi(X_s^{i}) ) \Id B_s^{i,l}\bigg| \\
        &\quad = \frac{1}{\sqrt{N}}  \sum\limits_{i=1}^N  \sum\limits_{j=1}^d \sum\limits_{l=1}^m \int_{\R^d} 
        \mathcal{F}(  \varphi(z))\int\limits_0^t \sigma^{j,l}(s,X_s^{i})  \exp(i z \cdot X_s^{i} )  \Id B_s^{i,l} \Id z \\
        &\quad \le C(N) \sup\limits_{i,j,l} \bigg(\int\limits_{\R^d}\bigg| \frac{
        \mathcal{F}(\partial_{x_j} \varphi(z)))}{(1+|x|^2)^{-\frac{(\alpha-1)}{2}}} \bigg|^2 \Id z  \bigg)^{\frac{1}{2}}  \\
        &\quad\quad \cdot \bigg( \int_{\R^d} \frac{1}{(1+|z|^2)^{\alpha-1}} \bigg|\int\limits_0^t \sigma^{j,l}(s,X_s^{i})  \exp(i z \cdot X_s^{i} ) \Id B_s^{i,l} \bigg|^2 \Id z\bigg)^{\frac{1}{2}} \\
        &\quad \le C(N) \norm{\varphi}_{H^{\alpha}(\R^d)} \sup\limits_{i,l}
        \bigg( \int_{\R^d} \frac{1}{(1+|z|^2)^{\alpha-1}} \bigg|\int\limits_0^t  \sigma^{j,l}(s,X_s^{i})  \exp(i z \cdot X_s^{i} ) \Id B_s^{i,l} \bigg|^2 \Id z\bigg)^{\frac{1}{2}} .
    \end{align*}
    Again, using the fact that \(\alpha-1 > d/2\) it is an easy exercise to show that the random variable is integrable by utilizing It\^{o}'s isometry. Hence, the claim follows by Lemma~\ref{lem: appendix_aux}.
    The fact that the versions are progressively measurable, follows immediately by the Pettis theorem and the fact that stochastic integrals are continuous. 
\end{proof}

The proof of Lemma~\ref{lemma: modification_super} is based on~\cite[Lemma~2.3]{Xianliang2023}. For the sake of completeness, we reproduce the proof here once again with a different exponent.

  \begin{proof}[Proof of Lemma~\ref{lemma: modification_super}]
We start with Taylor's expansion:
\[
\int_{\R^{dN}} \rho^{\otimes N} \exp\Bigl( N \Bigl|\bigl\langle \phi, \mu_N\otimes\mu_N \bigr\rangle\Bigr|^{4} \Bigr) \Id x
=\sum_{m=0}^\infty \frac{1}{m!}\int_{\mathbb{R}^{dN} } \bar{\rho}_N \Bigl( N \Bigl|\bigl\langle \phi, \mu_N\otimes\mu_N \bigr\rangle\Bigr|^{4}\Bigr)^m \Id x.
\]
For the $m$-th term, using 
\[
\mu_N = \frac{1}{N}\sum_{i=1}^N \delta_{x_i},
\]
we can write
\begin{align*}
&\frac{1}{m!}\int_{\R^{dN}} \rho^{\otimes N} \Bigl( N \Bigl|\bigl\langle \phi, \mu_N\otimes\mu_N \bigr\rangle\Bigr|^{4} \Bigr)^m \Id x \\
&\quad = \frac{1}{m!} N^m \int_{\R^{dN}}  \rho^{\otimes N}\Bigl(\frac{1}{N^2}\sum_{i,j=1}^N \phi(x_i,x_j)\Bigr)^{4m} \Id x \\
&\quad =\frac{1}{m!} N^{m-8m} \sum_{i_1,\ldots,i_{2qm},j_1,\ldots,j_{4m}=1}^N \int_{\R^{dN}} \rho^{\otimes N} \prod_{\kappa =1}^{4m} \phi(x_{i_\kappa},x_{j_\kappa}) \Id x,
\end{align*}

We now split the analysis into two cases.

\medskip

\textbf{Case 1.} If $8m > N$, then using the uniform bound $\|\phi\|_{L^\infty}$ we have
\begin{align*}
&\frac{ N^{m-8m}}{m!} \sum_{i_1,\ldots,i_{4m},j_1,\ldots,j_{4m}=1}^N  \int_{\R^{dN}} \rho^{\otimes N}\prod_{\kappa =1}^{4m} \phi(x_{i_\kappa},x_{j_\kappa}) \Id X_N  \\
&\quad \le \frac{N^{-7m}}{m!} (N^2)^{4m}\,\|\phi\|_{L^\infty}^{4m}
\le \frac{N^m}{m!} \|\phi\|_{L^\infty}^{4m}
\le m^{-1/2} e^m \frac{N^m}{m^m}  \|\phi\|_{L^\infty}^{4m} \\
&\quad \le  m^{-1/2} e^m 8^{m} \|\phi\|_{L^\infty(\R^d) \times \R^d }^{4m}
\end{align*}
where we utilized Stirling's formula 
\begin{equation*}
    x! = c_x \sqrt{2\pi x } \bigg( \frac{x}{e} \bigg)^x 
\end{equation*}
for \(1 < c_x < \frac{11}{10}\). 
At the end of the proof we require the smallness condition on the \(L^\infty\)-norm of \(\phi\) such that 
\begin{equation*}
    \beta_0^m:=   e^m 8^{m} \|\phi\|_{L^\infty(\R^d) \times \R^d }^{4m} < 1 
\end{equation*}
\medskip

\textbf{Case 2.} If \(8 \le 8m\le N\) 
We estimate the \( m \)-th term by counting how many choices of multi indices \((i_1, \dots, i_{4m}, j_1,\ldots, j_{4m})\) lead to a non-vanishing integral. If there exists a pair \((i_q, j_{q})\) such that 
\begin{equation*}
i_q \neq j_{q} \; \mathrm{and} \; i_q, j_q \notin \{i_\kappa, j_\kappa \} \; \mathrm{for \; any} \;   \kappa \neq q, 
\end{equation*} 
then the variables \( x_{i_q} \) and \( x_{j_q} \) appear exactly once in the integration and the cancellation property gets activated. 

We introduce the following notation:
\begin{itemize}
    \item Let \(l\) denote the number of \(x_{i_\kappa}\) or \(x_{j_\kappa}\), which appear exactly once in the integral. 
    \item Let $p$ denote the number of \(x_{i_\kappa}\) or \(x_{j_\kappa}\), which appear at least twice in the integral.  
\end{itemize}
A crucial observation is that if \(l > 4m \), there must exists a pair \((i_q,j_q)\), which only appears once. Indeed, since we have \(8m \) indices and they appear in pairs there must exists a pair which only appears once. Then by Fubini's theorem the cancellation property applies, see~\cite[page~101]{Xianliang2023}.   
Additionally, by the condition in the sum, we have the following relations: 
\begin{equation*}
    8\le 8m \le N, \quad 0 \le l \le  4 m , \quad 1 \le p \le \frac{8m-l}{2}. 
\end{equation*}

For fixed $l$ and $p$, there are $\binom{N}{l}\binom{N-l}{p}$ choices of indices. Moreover, once these indices are chosen, there are 
\[
\binom{4m}{l} 2^l \, l! \, p^{8m-l}
\]
ways to arrange them so that the cancellation condition is respected.
We find 
\begin{align*}
    &\frac{1}{m!} N^{-7m} \sum_{i_1,\ldots,i_{2qm},j_1,\ldots,j_{2qm}=1}^N \int_{\R^{dN}} \rho^{\otimes N} \prod_{\kappa =1}^{2qm} \phi(x_{i_\kappa},x_{j_\kappa}) \Id x \\
    &\quad \le 
    \frac{1}{m!} N^{-7m} \norm{\phi}_{L^\infty(\R^d \times \R^d )}^{4m}\sum_{l=0}^{4m} \sum_{p=1}^{4m-l/2} \binom{N}{l}\binom{N-l}{p} \binom{4m}{l} 2^l \, l! \, p^{8m-l} \\
    &\quad \le \norm{\phi}_{L^\infty(\R^d \times \R^d)}^{4m}\sum_{l=0}^{4m} \sum_{p=1}^{4m-l/2} \frac{N! N^{-7m}}{(N-p-l)!} \frac{1}{m!p!} \binom{4m}{l} 2^l  \, p^{8m-l}.
\end{align*}
Applying Stirling's formula to \(m!\) and \(p!\) we obtain 
\begin{equation*}
    \frac{N! N^{-7m}}{(N-p-l)!} \frac{1}{m!p!} \binom{4m}{l} 2^l  \, p^{8m-l}
    \le N^{p+l-7m} 2^{l} e^{m+p}   \binom{4m}{l} \frac{p^{8m-l-p}}{m^m}.
\end{equation*}
Moreover, we have
\begin{equation*}
    \frac{\binom{4m}{l}p^m}{m^m} \le \frac{2^{4m} (4m-l/2)^m }{m^m}
    \le 2^{6m}  
\end{equation*}
Plugging it into the previous inequality we arrive at 
\begin{equation*}
    \frac{N! N^{-7m}}{(N-p-l)!} \frac{1}{m!p!} \binom{4m}{l} 2^l  \, p^{8m-l}
    \le  (N/p)^{p+l-7m}   e^{m+p} 2^{l+5m}.
\end{equation*}
Now, \(p \le  N\) and \(p+l-7m \le \tfrac{8m-l}{2}+l-7m \le 4m+l/2 -7m \le 0 \). This implies
\begin{equation*}
    \sum_{l=0}^{4m} \sum_{p=1}^{4m-l/2} \frac{N! N^{-7m}}{(N-p-l)!} \frac{1}{m!p!} \binom{4m}{l} 2^l  \, p^{8m-l}
    \le 4m (4m-l/2) e^{p+6m}\le e^{14m}. 
\end{equation*}

A careful combinatorial analysis shows that the $m$-th term is bounded by
\[
\|\phi\|_{L^\infty(\R^d \times \R^d)}^{4m}\,e^{14m},
\]
i.e. it is controlled by $\alpha_0^m$ with 
\[
\alpha_0:= e^{14}\,\|\phi\|_{L^\infty(\R^d \times \R^d )}^4<1.
\]
Choosing, the \(L^\infty\)-norm of \(\phi\) small enough guarantees the last strict inequality. 

\medskip

Combining the estimates from both cases, we obtain
\[
\int_{ \mathbb{R}^{dN} } \bar{\rho}_N \exp\Bigl( N \Bigl|\bigl\langle \phi,\mu_N\otimes\mu_N \bigr\rangle\Bigr|^2\Bigr) \Id x \le 1+\sum_{m=1}^{\lfloor N/8\rfloor} \alpha_0^m + \sum_{m=\lfloor N/8\rfloor+1}^\infty \beta_0^m.
\]
Since
\[
\sum_{m=1}^\infty \alpha_0^m = \frac{\alpha_0}{1-\alpha_0}\quad \text{and}\quad \sum_{m=1}^\infty \beta_0^m = \frac{\beta_0}{1-\beta_0},
\]
the desired bound follows:
\[
\int_{\mathbb{T}^{dN}} \bar{\rho}_N \exp\Bigl( N \Bigl|\bigl\langle \phi,\mu_N\otimes\mu_N \bigr\rangle\Bigr|^4\Bigr) \Id  x  \le 1+\frac{\alpha_0}{1-\alpha_0} + \frac{\beta_0}{1-\beta_0}.
\]
\end{proof}

 \begin{lemma} \label{lemma: quadratic_common_noise}
 Let \((\Omega,\cF,(\cF_t, t\ge 0),\P)\) be filtered probability space and \((\cF_t, t\ge 0)\) satisfies the usual conditions with two independent Brownian motions \((B_t^1, t\ge 0)\), \((B_t^2, t\ge 0)\) with respect to the filtration \((\cF_t, t\ge 0)\). 
 Let \(f,g \colon \Omega \times [0,T] \to \R \) be measurable, adapted and 
 \begin{equation*}
     \E\bigg(\int\limits_0^T |f_t|^2+|g_t|^2 \Id t \bigg)< \infty. 
 \end{equation*}
 Let \(\cG\) be sub-\(\sigma\)-algebra of \(\cF\)  such that \(\cG\) is independent of the Brownian motion \(B^1\), \(B^2\) and \(\sigma(\cF_t,\cG) = \sigma(\tilde \cF_t, \cG) \) for some \(\sigma\)-algebra \(\tilde \cF_t\) such that \(\cG\) is independent of \(\tilde \cF_t\) and the sigma algebras generated by the Brownian motions \(\sigma(B^1_s, s \ge t)\) and \(\sigma(B_s^2, s \ge t)\). Let \(h_1\) be a bounded \(\cG\)-measurable function and \(h_2\) be a bounded \(\cF_s\) measurable function.  
 Then 
 \begin{equation*}
     \E\bigg( h_1 h_2 \int\limits_0^t  f(s,\cdot) \Id B_s^{1} \int\limits_0^t  g(s,\cdot) \Id B_s^{2} \bigg)
     = \E\bigg( h_1 h_2 \int\limits_0^s  f(s,\cdot) \Id B_s^{1} \int\limits_0^s  g(s,\cdot) \Id B_s^{2} \bigg) . 
 \end{equation*}
 \end{lemma}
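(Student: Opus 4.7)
My plan is to observe first that the lemma, as stated, is intended for the application in Lemma~\ref{lemma: characterization_martingale} where \(\cG = \cF^W\) for a common noise \(W\) independent of the idiosyncratic Brownian motions; I therefore prove it under the (implicit) hypothesis that \(\cG\) is independent of \(\cF_\infty := \sigma(\bigcup_{t \ge 0} \cF_t)\). Without this assumption the identity is false in general (a counterexample with \(\cG = \sigma(B_t^1)\) and \(h_1\) a bounded function of \(B_t^1 B_t^2\) is easy to produce), but with it the argument becomes a short martingale computation.

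Set \(I_u^1 := \int\limits_0^u f(r)\,\Id B_r^1\) and \(I_u^2 := \int\limits_0^u g(r)\,\Id B_r^2\), and split each integral on \([0,t]\) into its pieces on \([0,s]\) and \([s,t]\):
\begin{equation*}
I_t^1 I_t^2 - I_s^1 I_s^2 \;=\; I_s^1 (I_t^2 - I_s^2) + (I_t^1 - I_s^1) I_s^2 + (I_t^1 - I_s^1)(I_t^2 - I_s^2).
\end{equation*}
It suffices to show that \(\E[h_1 h_2 \, T] = 0\) for each of the three terms \(T\) on the right. For every such \(T\), the factor \(h_1\) is \(\cG\)-measurable whereas \(h_2 T\) is \(\cF_\infty\)-measurable, so independence of \(\cG\) and \(\cF_\infty\) yields \(\E[h_1 h_2 T] = \E[h_1]\,\E[h_2 T]\), reducing the problem to \(\E[h_2 T] = 0\).

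For the two cross terms, I would apply the tower property with respect to \(\cF_s\): both \(h_2 I_s^1\) and \(h_2 I_s^2\) are \(\cF_s\)-measurable, and since \(\int_0^\cdot f\, \Id B^1\) and \(\int_0^\cdot g\, \Id B^2\) are \((\cF_t)\)-martingales under the square-integrability hypothesis, \(\E[I_t^j - I_s^j \mid \cF_s] = 0\) for \(j=1,2\), killing both terms. For the pure-increment term \(\E[h_2 (I_t^1 - I_s^1)(I_t^2 - I_s^2)]\), I would invoke the independence of \(B^1\) and \(B^2\), which gives \(\langle B^1, B^2\rangle \equiv 0\) and hence \(\langle I^1, I^2\rangle \equiv 0\); therefore \(I^1 I^2\) is itself an \((\cF_t)\)-martingale, and conditioning on \(\cF_s\) produces \(\E[(I_t^1 - I_s^1)(I_t^2 - I_s^2) \mid \cF_s] = 0\). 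The only subtle point is the implicit independence hypothesis on \(\cG\); once this is made explicit, everything else is a routine application of the tower property and the orthogonality of Itô integrals against independent Brownian motions.
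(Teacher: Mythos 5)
Your diagnosis that the lemma as stated is missing a hypothesis on \(\cG\) is correct (your counterexample is valid; the paper's own proof silently invokes such a hypothesis when it claims the Brownian increments are independent of \(\sigma(\cF_{t_i},\cG)\)), and your decomposition of \(I_t^1 I_t^2 - I_s^1 I_s^2\) into two cross terms and a product of increments is the same as the paper's. Your treatment of the product-of-increments term via \(\langle I^1, I^2\rangle \equiv 0\) is in fact cleaner than the paper's double simple-function approximation with case analysis on overlapping time intervals. However, the hypothesis you chose to add --- \(\cG\) independent of \(\cF_\infty\) --- is the wrong one, and it empties the lemma of its only application. In Lemma~\ref{lemma: characterization_martingale} the lemma is applied with \(h_1 = \gamma_1(W)\), so \(\cG = \sigma(W_u,\, u \le T)\); since \(W\) is adapted to \((\cF_t, t\ge 0)\), this \(\cG\) is a sub-\(\sigma\)-algebra of \(\cF_\infty\) and can be independent of \(\cF_\infty\) only if it is \(\P\)-trivial. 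So the statement you prove is vacuous in the intended setting.

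The hypothesis the paper actually uses is weaker: for every \(u\), the post-\(u\) increments of \(B^1\) and \(B^2\) are independent of the enlarged \(\sigma\)-algebra \(\sigma(\cF_u, \cG)\) --- equivalently, \(B^1, B^2\) remain Brownian motions with respect to the filtration \((\sigma(\cF_u, \cG), u \ge 0)\). This holds for \(\cG = \cF^W_T\) because \(W\) and \((B^1,B^2)\) are independent Brownian motions with respect to the same filtration. Under this weaker hypothesis your factorization \(\E( h_1 h_2 T) = \E( h_1)\,\E( h_2 T)\) is unavailable; the correct move (and the paper's) is to condition on \(\sigma(\cF_s, \cG)\): the random variables \(h_1 h_2 I_s^1\) and \(h_1 h_2 I_s^2\) are \(\sigma(\cF_s,\cG)\)-measurable, and one shows \(\E( I_t^j - I_s^j \mid \sigma(\cF_s,\cG)) = 0\) and \(\E( (I_t^1 - I_s^1)(I_t^2-I_s^2) \mid \sigma(\cF_s,\cG)) = 0\), either by approximation with simple processes as the paper does, or, along your cleaner route, by noting that \(I^1\), \(I^2\) and \(I^1 I^2\) are still martingales for the enlarged filtration. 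With that substitution your argument goes through essentially unchanged; as written, it proves a statement that does not cover the case for which the lemma is needed.
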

 
\begin{remark}
The independence assumption on \(\mathcal{G}\) may appear unusual at first. 
To see why it is natural, consider the simple case in which 
\(\mathcal{F}_t = \sigma(B^1_s, B^2_s, W_s : s \le t)\) for three independent Brownian motions 
\(B^1, B^2, W\), and let \(\mathcal{G} = \sigma(W)\) 
(we ignore the usual augmentation for clarity). 
Then
\(
\sigma(\mathcal{G}, \mathcal{F}_t) 
= \sigma\big( (B^1_s, B^2_s : s \le t),\, W \big),
\)
and the independence required in Lemma~\ref{lemma: quadratic_common_noise} is satisfied. 
In the setting of the main theorem the sigma algebra is slightly larger, as it also contains the usual augmentation and the independent initial condition. 
However these additions remain independent of the future increments of the Brownian motions, so the assumption in the Lemma continues to hold.
\end{remark}

\begin{proof}
We start by decomposing the integral.     
\begin{align*}
     \E\bigg( h_1 h_2 \int\limits_0^t  f (s,\cdot) \Id B_s^{1} \int\limits_0^t  g (s,\cdot) \Id B_s^{2} \bigg) 
     & = \E\bigg( h_1 h_2  \int\limits_0^s  f (s,\cdot) \Id B_s^{1} \int\limits_0^s  g (s,\cdot) \Id B_s^{2} \bigg) \\
     &\quad \quad+ \E\bigg( h_1 h_2  \int\limits_s^t   f(s,\cdot) \Id B_s^{1} \int\limits_0^s  g (s,\cdot) \Id B_s^{2} \bigg) \\
     &\quad \quad+ \E\bigg(h_1 h_2  \int\limits_0^s   f(s,\cdot) \Id B_s^{1} \int\limits_s^t   g (s,\cdot) \Id B_s^{2} \bigg) \\
     &\quad \quad +E\bigg( h_1 h_2  \int\limits_s^t   f(s,\cdot) \Id B_s^{1} \int\limits_s^t  g (s,\cdot) \Id B_s^{2} \bigg)
    \end{align*}
The first term on the right-hand side is exactly what we need. It remains to explain why the last three terms vanish. 
We resort to an approximation argument.  
Assume \((f_n,n \in \N)\) and \((g_n, n \in \N)\) are two sequence of simple functions, where for each \(n \in \N\) the functions are given by 
\begin{align*}
    f_n(\omega,t) &= a_0(\omega) \indicator{\{s\}}(t) + \sum\limits_{i=1}^{m} a_i(\omega) \indicator{(t_i,t_{i+1}]}(t)  \\
    g_n(\omega,t) &= b_0(\omega) \indicator{\{s\}}(t) + \sum\limits_{i=1}^{\hat m} b_i(\omega) \indicator{(\hat t_i, \hat t_{i+1}]}(t) , 
\end{align*}
with some real numbers \((t_i, i =1, \ldots, m)\), \((\hat t_i, i =1, \ldots, \hat m)\) with \(t_i=s,t_{m}=T,\hat t_0 = s, \hat t_{\hat m}=T\), some \(\cF_{t_i}\)-measurable random variables \(a_i\) and some \(\cF_{\hat t_i}\)-measurable random variables \(b_i\), which are square integrable. 
For the second term we find 
\begin{align} \label{eq: appendix_aux1}
\begin{split}
     &\E\bigg( h_1 h_2 \int\limits_s^t   f(s,\cdot) \Id B_s^{1} \int\limits_0^s  g(s,\cdot) \Id B_s^{2} \bigg) \\
     &\quad = \E \bigg(h_1h_2 \int\limits_0^s  g(s,\cdot) \Id B_s^{2} \E\bigg( \int\limits_s^t   f(s,\cdot) \Id B_s^{1} \vert \sigma(\cF_s,\cG)  \bigg) \bigg) \\
     &\quad = 0. 
     \end{split}
\end{align}
The last inequality follows by the fact that for the approximation \((f_n, n \in \N)\) we find 
\begin{align*}
    \E\bigg( \int\limits_s^t   f_n(s,\cdot) \Id B_s^{1} \vert \sigma(\cF_s,\cG)  \bigg)
    &= \sum\limits_{i=1}^m \E(  a_i \E(  (B_{t_{i+1}}^1-B_{t_i}^1)  \vert \sigma(\cF_{t_i},\cG) )  \vert \sigma(\cF_s,\cG)  ) \\
    &= 0 ,
\end{align*}
where we used \(t_i \ge s\) in the first step and the independency of the Brownian motion of the sigma algebra \(\sigma(\cF_{t_i},\cG)\) in the last step. Additionally, the conditional expectation is a \(L^2\) contraction and, therefore, 
\begin{align*}
& \E\bigg( \bigg|  \E\bigg( \int\limits_s^t   f(s,\cdot) \Id B_s^{1} \vert \sigma(\cF_s,\cG) \bigg) \bigg|^2 \bigg)\\
     &\quad = \E\bigg( \bigg| \E\bigg( \int\limits_s^t   f_n(s,\cdot) \Id B_s^{1} \vert \sigma(\cF_s,\cG)\bigg)  - \E\bigg( \int\limits_s^t   f(s,\cdot) \Id B_s^{1} \vert \sigma(\cF_s,\cG) \bigg) \bigg|^2 \bigg) \\
     &\quad \le \E \bigg( \bigg| \int\limits_s^t   f_n(s,\cdot) \Id B_s^{1}  - \int\limits_s^t   f(s,\cdot) \Id B_s^{1} \bigg|^2 \bigg)  \\
     &\quad \xrightarrow[]{n \to \infty} 0 
\end{align*}
by the properties of stochastic integrals, see~\cite[Chapter~3.2]{KaratzasIoannis2009Bmas}. This establishes~\eqref{eq: appendix_aux1}. 
The third follows, analogously. The last term vanishes again by a simple approximation procedure. 

    \begin{equation*}
    E\bigg( h_1 h_2  \int\limits_s^t   f(s,\cdot) \Id B_s^{1} \int\limits_s^t  g (s,\cdot) \Id B_s^{2} \bigg)
          = \sum\limits_{i=1}^n \sum\limits_{j=1}^m \E\bigg( h_1 h_2 a_{i}(B_{t_{i+1}}^1-B_{t_{i}}^1) b_{j}(B_{\hat t_{j+1}}^2-B_{\hat t_{j}}^2)  \bigg)
    \end{equation*}
Let us assume \(t_{i} \le t_{i+1} \le \hat t_j \le \hat t_{j+1} \). Then conditioning the above on \(\cF_{\hat t_{j+1}}\) and using the martingale property of the Brownian motion with respect to \((\cF_t,t \ge 0)\) the term vanishes. The same thing hold if the roles of \(i,j\) are switched. Let us assume \(t_{i} \le \hat t_j  \le \min(\hat t_{j+1},t_{i+1}), \). We find 
\begin{align*}
    &\E\bigg( \gamma_1( W) \gamma_2( M^N_{|[0,s]}) a_{i}(B_{t_{i+1}}^1-B_{t_{i}}^1) b_{j}(B_{\hat t_{j+1}}^2-B_{\hat t_{j}}^2)  \bigg) \\
    &\quad =\E\bigg( \gamma_1( W) \gamma_2( M^N_{|[0,s]}) a_{i}  b_{j} \E( (B_{t_{i+1}}^1-B_{\hat t_j}^1 +B_{\hat t_j}^1 -B_{t_{i}}^1) (B_{\hat t_{j+1}}^2-B_{ \hat t_{j}}^2) | \cF_{\hat t_{j}} )  \bigg) \\
    &\quad = \E\bigg( \gamma_1( W) \gamma_2( M^N_{|[0,s]}) a_{i}  b_{j} (B_{\hat t_j}^1 -B_{t_{i}}^1)  \E(  (B_{\hat t_{j+1}}^2-B_{ \hat t_{j}}^2) | \cF_{\hat t_{j}} )  \bigg) \\
    &\quad =0, 
\end{align*}
where we used the fact that \((B_{t_{i+1}}^1-B_{\hat t_j}^1) (B_{\hat t_{j+1}}^2-B_{ \hat t_{j}}^2) \) is independent of \(\cF_{\hat t_j}\) in the second step and the martingale property in the last step. Again, interchanging the roles of \(i,j\) we have proven all cases of indices and and the Lemma is proven. 
\end{proof}
Notice, that we only required the independence for the last term corresponding to the time intervals \([s,t],[s,t]\). Choosing a function \(g\), which gets cut-off at time \(s\), we obtain the following corollary. 
\begin{corollary} \label{cor: quadratic_common_noise}
    In the situation of Lemma~\ref{lemma: quadratic_common_noise} we obtain 
    \begin{equation*}
         \E\bigg( h_1 h_2 \int\limits_0^t  f(s,\cdot) \Id B_s^{1} \int\limits_0^s  g(s,\cdot) \Id B_s^{1} \bigg)
     = \E\bigg( h_1 h_2 \int\limits_0^s  f(s,\cdot) \Id B_s^{1} \int\limits_0^s  g(s,\cdot) \Id B_s^{1} \bigg) . 
    \end{equation*}
\end{corollary}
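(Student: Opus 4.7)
The key observation is that cutting off $g$ at time $s$ makes the $[s,t]\times[s,t]$ cross term of the four-piece decomposition used in the proof of Lemma~\ref{lemma: quadratic_common_noise} vanish automatically, and this was the only place in that proof where the independence of two Brownian motions was actually used. Consequently the same identity survives when both stochastic integrals are driven by a single Brownian motion $B^1$, without any independence hypothesis linking them.

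The plan is to set $\tilde g(u,\omega) := g(u,\omega)\indicator{[0,s]}(u)$ so that $\int_0^s g\,\Id B_u^1 = \int_0^t \tilde g\,\Id B_u^1$, and then reproduce the decomposition of Lemma~\ref{lemma: quadratic_common_noise}:
\[
\int_0^t f\,\Id B^1 \cdot \int_0^t \tilde g\,\Id B^1 = \int_0^s f\,\Id B^1 \int_0^s \tilde g\,\Id B^1 + \int_s^t f\,\Id B^1 \int_0^s \tilde g\,\Id B^1 + \int_0^s f\,\Id B^1 \int_s^t \tilde g\,\Id B^1 + \int_s^t f\,\Id B^1 \int_s^t \tilde g\,\Id B^1.
\]
Since $\tilde g \equiv 0$ on $[s,t]$, the third and fourth terms vanish pointwise, and the first term is exactly the right-hand side of the claimed identity. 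Thus the entire corollary reduces to showing that the $\E[h_1 h_2\,\cdot\,]$-expectation of the second term is zero.

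For that second term I would condition on $\sigma(\cF_s,\cG)$. Because $h_1 h_2 \int_0^s \tilde g\,\Id B^1$ is $\sigma(\cF_s,\cG)$-measurable ($h_1$ is $\cG$-measurable, $h_2$ is $\cF_s$-measurable, and the truncated stochastic integral is $\cF_s$-measurable), the tower property reduces the claim to $\E\bigl(\int_s^t f\,\Id B^1 \mid \sigma(\cF_s,\cG)\bigr)=0$. This is established by the very same approximation argument already used in Lemma~\ref{lemma: quadratic_common_noise}: one writes $f$ as an $L^2$-limit of simple adapted processes on a partition $s = t_0 < t_1 < \ldots < t_m = T$, and for each elementary increment $B^1_{t_{i+1}} - B^1_{t_i}$ with $t_i \ge s$ the independence of this increment from $\sigma(\cF_{t_i},\cG)$ yields a vanishing conditional mean; an $L^2$-contraction argument then passes the vanishing to the stochastic integral of $f$ itself. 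The only conceptual point to verify — and it is mild — is that this conditioning step never requires a second, independent Brownian motion, precisely because the single delicate term of Lemma~\ref{lemma: quadratic_common_noise} that demanded such independence has been removed by the truncation of $g$.
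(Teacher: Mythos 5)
Your proposal is correct and matches the paper's own justification: the remark preceding the corollary argues exactly that replacing $g$ by its truncation at time $s$ kills the $[s,t]\times[s,t]$ cross term, which was the only place the independence of the two Brownian motions entered, while the remaining mixed term vanishes by the same conditioning on $\sigma(\cF_s,\cG)$ used in the lemma. Your write-up simply spells this out in more detail than the paper does.
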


\end{appendices}

%%===========================================================================================%%
%% If you are submitting to one of the Nature Portfolio journals, using the eJP submission   %%
%% system, please include the references within the manuscript file itself. You may do this  %%
%% by copying the reference list from your .bbl file, paste it into the main manuscript .tex %%
%% file, and delete the associated \verb+\bibliography+ commands.                            %%
%%===========================================================================================%%

\medskip

{\noindent\textbf{Acknowledgements}: The author has received funding from the project MeCoGa, Mean Field Control and Games, at the University of Padova through the STARS@UNIPD - NextGenerationEU program, as well as from the European Union's Horizon Europe research and innovation programme under the Marie Sk\l;odowska-Curie Actions Staff Exchanges (Grant Agreement No. 101183168, Call: 
HORIZON-MSCA-2023-SE-01) and from the DFG CRC/TRR 388 “Rough Analysis, Stochastic Dynamics and Related Fields”, Project A11.}
\vspace{2em}

{\noindent\textbf{Disclaimer}: Funded by the European Union. Views and opinions expressed are however those of the author only and do not necessarily reflect those of the European Union or the European Education and Culture Executive Agency (EACEA). Neither the European Union nor EACEA can be held responsible for them.}

\bibliography{quellen}
\bibliographystyle{amsalpha}
\end{document}